\documentclass[11pt]{article}
 
\usepackage{amsmath,amsthm,amssymb}
\usepackage[T1]{fontenc}
\usepackage[utf8]{inputenc}
\usepackage[dvipdf]{graphicx}
\usepackage{color}
\usepackage{epstopdf}
\usepackage{dsfont}
\usepackage{enumerate}
\usepackage{enumitem}

\definecolor{db}{RGB}{0, 0, 130}
\usepackage[colorlinks=true,citecolor=red,linkcolor=db,urlcolor=blue,pdfstartview=FitH]{hyperref}
 
\usepackage[normalem]{ulem}
\usepackage[numbers]{natbib}

\definecolor{rp}{rgb}{0.25, 0, 0.75}
\definecolor{dg}{rgb}{0, 0.6, 0}

\textheight = 23 cm
\textwidth = 16 cm
\footskip = 0,5 cm
\topmargin = 0 cm
\headheight = 0 cm
\headsep =0 cm
\oddsidemargin= 0 cm
\evensidemargin = 0 cm
\marginparwidth = 0 cm
\marginparsep = 0 cm \topskip = 0 cm
 
\newtheorem{theorem}{Theorem}[section]

\newtheorem{definition}{Definition}[section]

\newtheorem{assumption}[theorem]{Assumption}
\newtheorem{lemma}[definition]{Lemma}

\newtheorem{proposition}[definition]{Proposition}
\newtheorem{remark}[definition]{Remark}
\def\1{\mathbf{1}}
\def\R{\mathbb{R}}
\def\C{\mathbb{C}}
\def\D{\mathbb{D}}
\def\E{\mathbb{E}}

\def\Q{\mathbb{Q}}

\def\L{\mathbb{L}}
\def\F{\mathbb{F}}
\def\P{\mathbb{P}}
\def\S{\mathbb{S}}
\def\H{\mathbb{H}}
\def\G{\mathbb{G}}
\def\M{\mathbb{M}}
 
\def\Ac{\mathcal{A}}
\def\Bc{\mathcal{B}}
\def\Cc{\mathcal{C}}
\def\Fc{\mathcal{F}}
\def\Xb{\overline{X}}
\def\Xh{\widehat{X}}
\def\Xt{\widetilde{X}}
\def\Yt{\widetilde{Y}}
\def\Pb{\widehat{\P}}
\def\Yh{\widehat{Y}}

\def\Gc{\mathcal{G}}
\def\Lc{\mathcal{L}}
\def\Pc{\mathcal{P}}
\def\Qc{\mathcal{Q}}

\def\Kc{\mathcal{K}}
 
\def\Ut{\widetilde U}
\def\xit{\tilde \xi}
\def\Wt{\widetilde W}
\def\Lt{\widetilde L}
\def\Gt{\widetilde G}
 
\def\d{\mathrm{d}}

\def\xb{\mathbf{x}}
\def\yb{\mathbf{y}}
\def\wb{\mathbf{w}}
\def\bb{\mathbf{b}}
 
\def\Et{\widetilde \E}
 
\def\x{\times}
\def\Om{\Omega}
\def\om{\omega}
\def\eps{\varepsilon}
 
\def\Omh{\widehat{\Om}}
\def\Omb{\overline \Om}

\def\omb{\bar \om}

\def\Fcb{\overline \Fc}
\def\Pcb{\overline \Pc}
\def\Pb{\overline \P}
\def\Qb{\overline \Q}
\def\Vb{\overline V}
\def\Fb{\overline \F}

\def\Wc{\mathcal{W}}

\def\Sb{\overline S}
\def\Wb{\overline W}
\def\alphab{\overline \alpha}
 
\def\Lambdah{\widehat{\Lambda}}
\def\Wh{\widehat{W}}
\def\Fh{\widehat{\F}}
\def\Fch{\widehat{\Fc}}

\def\Gb{\overline \G}
\def\Gcb{\overline \Gc}

\def\mub{\bar \mu}
\def\muh{\widehat \mu}
\def\nub{\bar \nu}
\def\nuh{\widehat \nu}

\title{Limit theory for mean-field control problems\\ with common noise {adapted} controls}
 
\author{
Bruno Bouchard
\footnote{CEREMADE, Universit\'e Paris-Dauphine, PSL, CNRS. bouchard@ceremade.dauphine.fr. }
\and
Xiaolu Tan
\footnote{Department of Mathematics, The Chinese University of Hong Kong. xiaolu.tan@cuhk.edu.hk.}
}
 
\date{\today}

\begin{document}

\maketitle

\begin{abstract}
We consider a mean-field control problem in which   admissible controls are required to be adapted to the common noise filtration.
The main objective is to show how the mean-field control problem can be approximates by time consistent centralized  finite population problems in which the central planner has full information on all agents' states and gives an identical signal to all agents. We also aim at establishing the optimal convergence rate.
In a first general path-dependent setting, we only prove convergence  by using weak convergence techniques of probability measures on the canonical space.
Next, when only the drift coefficient is controlled, we obtain a backward SDE characterization of the value process,
based on which a convergence rate is established in terms of the Wasserstein distance between the original measure  and the empirical one  induced by the particles.
It requires  Lipschitz continuity conditions in the Wasserstein sense. The convergence rate is  optimal.
In a Markovian setting and under convexity conditions on the running reward function, we next prove uniqueness of the optimal control and provide regularity results on the value function, and then deduce the optimal weak convergence rate in terms of the number of particles.
Finally, we apply these results to the  study of a classical optimal control problem with partial observation,
leading to an original approximation method by particle systems.
 
\end{abstract}

\section{Introduction}

Mean-field control problems {consist in controlling  mean-field (or McKean-Vlasov) SDEs} so as to optimize a reward function.
These problems have been largely studied in the recent literature, motivated by their applications in economics, engineering, finance, etc.,   to model the limit behavior of  large population control problems.
Besides the study of classical dynamic programming and  maximum principle approaches,
an interesting and important subject is to establish the so-called propagation of chaos results,
that is, the convergence (rate) of  finite population control problems to a mean-field limit when the population's size $N$ goes to infinity.
General convergence results are obtained in Lacker \cite{LackerLimit} and Djete, Possama\"i and Tan \cite{DjeteApprox} by using weak convergence techniques.
A more recent stream of literature focuses on the convergence rate, strong or weak, 
{see \cite{BCC, BEZ, GMS, GPW, CardSoug, CardDJS} and in particular \cite{DaudDelarue, CardSharp} for the optimal convergence rate.}

\vspace{0.5em}
 
Motivated by its applications in  optimal control  with partial observation and   centralized optimal control problems,
we study in this paper a mean-field control problem with common noise,
in which the controlled dynamic is defined as the solution to a McKean-Vlasov SDE:
$$
X^{\alpha}_t
=
X_0
+ \int_0^t b(s, X^{\alpha}_s, \mu^{\alpha}_s, \alpha_s) \d s
+ \int_0^t \sigma(s, X^{\alpha}_s, \mu^{\alpha}_s, \alpha_s) \d W_s
+ \int_0^t \sigma_0 (s, X^{\alpha}_s, \mu^{\alpha}_s, \alpha_s) \d B_s,
$$
in which $W$ is the individual noise, {$B$ is} the common noise, $\mu^{\alpha}_t := \Lc(X^{\alpha}_t | \Fc^B_t)$ is the marginal law at $t$ of the controlled process given the common noise,
and the admissible control process $\alpha$ is required to be adapted to the filtration $\F^B = (\Fc^B_t)_{t \le T}$ generated by the common noise { $B$}.
The optimal control problem is   given by
\begin{equation} \label{eq:ctrl_pb_intro}
\sup\left\{ \E \Big[ \int_0^T L(t, \mu^{\alpha}_t, \alpha_t) dt + g(\mu^{\alpha}_T) \Big] : \alpha \mbox{ is }\F^B-\mbox{adapted}\right\}.
\end{equation}
{In the above, the coefficients could also depend on $X^\alpha$, see Remark \ref{rem:reward_coeff} below.}
 
As finite population approximation,
we consider a centralized control problem with agents indexed by $k = 1, \ldots, N$,
in which the central planner has full information on the states of all agents
and gives the identical control signal $\alpha$ to all agents.
The dynamic of agent $k = 1, \ldots, N$ is defined by
\begin{equation} \label{eq:X_N_intro}
dX^k_t
=
b(t, X^k_t, \mu^{N}_t, \alpha_t) \d t
+
\sigma(t, X^k_t, \mu^{N}_t, \alpha_t) \d W^k_t
+
\sigma_0(t, X^k_t, \mu^{N}_t, \alpha_t) \d B_t,
\end{equation}
where $(W^1, \ldots, W^N)$ is a Brownian motion independent of $B$ and  $\mu^{N} := \frac1N \sum_{k=1}^N \delta_{X^k}$ is the empirical measure process of the particles.
The corresponding  finite population control problem is then given by
\begin{equation} \label{eq:ctrl_pb_N_intro}
\sup\left\{ \E \Big[ \int_0^T L(t, \mu^N_t, \alpha_t) dt + g(\mu^N_T) \Big]: \alpha \mbox{ is }\F^N-\mbox{adapted} \right\},
\end{equation}
in which $\F^N$ is the filtration generated by $(W^1, \ldots, W^N, B)$.
{Let us mention again that our mean-field control problem is different from the classical ones since the control process $\alpha$ in \eqref{eq:ctrl_pb_intro} is required to be adapted to the common noise filtration $\F^B$, while   admissible controls in the finite population problems  \eqref{eq:ctrl_pb_N_intro} are   adapted to the full information  flow generated by $(W^1, \ldots, W^N, B)$ and is shared by all agents $k=1, \ldots, N$ in \eqref{eq:X_N_intro}. The main advantage of this (probably counter-intuitive) finite population formulation is that it makes it time-consistent, so that classical dynamic programming approaches can be used to solve \eqref{eq:ctrl_pb_N_intro} numerically, see (ii) of Remark \ref{rem : HJB eq} below. On the other hand, the fact that particles share the same control will imply that the observation of  $(W^1, \ldots, W^N)$ will not play any role at the limit $N\to \infty$.}
Such a centralized control problem also appears very naturally in the applications of the mean-field theory in social/economic problems,
where the policy maker has full information on the whole population and then applies a universal policy on each individual {(e.g.~taxe rule, etc.).}

\vspace{0.5em}
Our main objective is to prove the convergence of the finite population control problem \eqref{eq:ctrl_pb_N_intro}  to the mean-field control problem \eqref{eq:ctrl_pb_intro} as $N \longrightarrow \infty$, to establish a convergence rate, and also to study the regularity of the value function.
 
\vspace{0.5em}
 
We first consider a general path-dependent setting and prove convergence, by adapting  weak convergence techniques, see e.g.~\cite{LackerLimit} and \cite{DjeteApprox}, for classical mean-field control problems with and without common noise.
The main idea consists in considering an appropriate relaxed formulation on the canonical space,
such that the space of all relaxed controls is closed, convex and can be approximated by strong/weak controls.
Then, it is enough to show that the sequence of $\eps$-optimal controls in \eqref{eq:ctrl_pb_N_intro} is tight and that any convergent subsequence converges to a limit which belongs to the space of optimal relaxed controls for \eqref{eq:ctrl_pb_intro}.
 
\vspace{0.5em}
 
We then specialize to a path-dependent setting where only the drift coefficient is controlled,
which allows us to appeal to a Backward SDE (BSDE) approach to provide a convergence rate, {under Lipschitz-continuity conditions in the $\Wc_2$-Wasserstein sense.} It is optimal
 and  is indeed the same   as in the linear setting.
While the BSDE theory provides a strong tool for classical optimal control problems,
it has been seldomly used to study the value process of mean-field control problems.
One of the main reasons is that, for classical mean-field control problems, the individual noise disappears in the dependence of the distribution process in the limit case.
Note that mean-field BSDEs have also been introduced and studied, see e.g.~\cite{BLP} { among many others}. They are usually obtained as   auxiliary forward-backward systems in the stochastic maximum principle, but do not describe the value function itself.
Here, we actually characterize the value process (as well as the optimal control) of our mean-field control problem by a specific BSDE,
and then deduce the optimal convergence rate under a $\Wc_2$-Lipschitz condition by using standard stability analysis.
 
\vspace{0.5em}
 
We then specialize further to a Markovian setting, in which the value function can be characterized by a Hamilton-Jaobi-Bellman master equation,
and  look for a more analytic way of  establishing the optimal weak rate in the spirit of \cite{CardDJS, CardSharp, DaudDelarue}.
When the running reward function $L(\cdot,a)$ is strictly concave in the value of the control $a$, we show that there exists a unique weak optimal control  (defined as a probability measure on the canonical space) which is continuous w.r.t.~the initial distribution of the controlled process. Based on this, the $C^1$-regularity of the value function is deduced.
This approach is original and completely different from the one used in the literature for the regularity analysis of   classical mean-field control problems,
see e.g. \cite{CardSoug} for comparison.
Next, when the volatility coefficient   $\sigma$ and $\sigma_0$ are constants, we obtain a more precise characterization of the value function as the solution to a classical parabolic PDE parametrized by the initial measure, and then establish that it is indeed $C^2$.
Using classical arguments, this regularity result allows one to establish that the {weak} convergence rate is of order $1/N$, which is optimal.
 
\vspace{0.5em}
 
Finally, we note that the  \eqref{eq:ctrl_pb_intro} actually covers a class of optimal control problems with partial observation, as studied since the 80s,
using both the dynamic programming  and the maximum principle approaches, see e.g.~\cite{Bensoussan}.
While general explicit solutions  are not available,  except in the linear quadratic setting,
their numerical approximation has been seldomly investigated (see \cite{LTTang}).
Our convergence results provide a first step towards  a numerical approximation method for general partially observed control problems.
Indeed, as already highlighted,  the $N$-population control problem \eqref{eq:ctrl_pb_N_intro}
{is time-consistent and can therefore be solved by dynamic programming, recall  (ii) of Remark \ref{rem : HJB eq} below}.
 
\vspace{0.5em}
 
The rest of the paper is organized as follows.
In Section \ref{sec:mfc_form}, we introduce our mean-field control problem and its finite population approximation.
Section \ref{sec:path_depend} is dedicated to general convergence results in  the path-dependent setting.
We specialize to the Markovian case in  Section \ref{sec:Markovian}.
The application to optimal control problems under partial observation is discussed in Section \ref{sec:partial_observ}.
\vspace{0.5em}
 
\noindent {\bf Notations.}
 We collect here general notations that will be used all over the paper. Let $d \ge 1$ be a positive integer. We denote by $\S^d$ the space of all $d \x d$ matrices, and by $\Cc^d := C([0,T], \R^d)$ the space of all $\R^d$-valued continuous paths on $[0,T]$, $T>0$, which is a Polish space under the topology induced by 
{the sup-norm:  
$$
 	\|\xb\|:=\sup\{|\xb_t|,t\le T\},
	~\mbox{for any}~
	\xb = (\xb_t)_{t\le T}\in \Cc^d,
$$ 
in which $|\cdot|$ stands for the Euclidean norm on $\R^d$.}
Given a (nonempty) metric space $(E, \rho)$, we denote by $\Bc(E)$ its Borel $\sigma$-field,
and by $\Pc(E)$ the space of all associated Borel probability measures.
For $p \ge 1$, we denote by $\Pc_p(E)$ the space of all probability measures $\mu \in \Pc(E)$ such that {$\int_E \rho(e, e_0)^p \mu(\d e) < \infty$} for some fixed $e_0 \in E$.
The space $\Pc_p(E)$ is equipped with the Wasserstein distance $\Wc_p$.
Given $\mu \in \Pc(E)$ and a  $\mu$-integrable function $\varphi: E \longrightarrow \R$, we write
$$
\langle \mu, \varphi \rangle := \E^{\mu}[ \varphi ] := \int_E \varphi (e) \mu(\d e).
$$
We shall mainly work on the canonical space $\Om := \R^d \x \Cc^d \x \Cc^d$, with canonical element $(\xi, W, B)$, i.e. $\xi(\om) := \om^{\xi}$, $W_t(\om) := \om^W_t$ and $B_t(\om) := \om^B_t$, for all $\om = (\om^{\xi}, \om^B, \om^{W}) \in \Om$ and $t\le T$. Let $\F = (\Fc_t)_{t \le T}$ be the canonical filtration and $\G = (\Gc_t)_{t \le T}$ be the filtration generated by the process $B$, i.e.
$$
	\Fc_t := \sigma(\xi, W_s, B_s:~ s \le t),
	\quad 
	\Gc_t := \sigma(B_s:~ s \le t), 
	~\mbox{for all}~t\in [0,T].
$$
We let  $\Fc := \sigma(\xi, W, B)$.

\vspace{0.5em}

Given a probability measure $\P$, a random variable $Y$  and a sigma-algebra $\Sigma$ (defined on a suitable probability space), we denote by $\Lc^\P(Y)$ and  $\Lc^\P(Y|\Sigma)$ the law  of $Y$ and the law of $Y$ given $\Sigma$. When $\Sigma=\sigma(Z)$ for another random variable $Z$, we simply write $\Lc^\P(Y|Z)$.

\section{The mean-field control problem and its approximation}
\label{sec:mfc_form}
 
Let $A$ be a (non-empty) Polish space, under the metric $\rho$, and in which we fix a point $a_0 \in A$.
We consider the {(path-dependent)} drift and volatility coefficients 
$$
(b, \sigma, \sigma_0) : [0,T] \x \Cc^d \x \Pc_2(\Cc^d) \x A \longrightarrow \R^d \x \S^d \x \S^d,
$$
and the {(path-dependent)} reward functions
$$
L : [0,T] \x \Pc_2(\Cc^d) \longrightarrow \R
~~\mbox{and}~~
g: \Pc_2(\Cc^d) \longrightarrow \R.
$$
Throughout the paper, we fixe an initial distribution $\nu_0 \in \Pc_2(\R^d)$.

\subsection{The mean-field optimal control problem}
 
{ On the canonical space $(\Om, \Fc)$, we fix a probability measure  $\P$, 
under which $W$ and $B$ are standard Brownian motions, $\xi \sim \nu_0$ and $(\xi, W, B)$ are mutually independent.
We denote by}
\begin{equation} \label{eq:defAc0}
\Ac_0
~:=~
\Big\{
\alpha: [0,T] \x \Om \longrightarrow A ~: \alpha ~\mbox{is}~\G \mbox{-predictable and}~ \E \Big[ \int_0^T \rho(\alpha_t, a_0)^2 \d t \Big] < \infty
\Big\}.
\end{equation}
Then, for each $\alpha \in \Ac_0$, the controlled process $X^{\alpha}$ is defined { on $(\Om, \Fc, \P)$} by the (path-dependent) McKean-Vlasov SDE
\begin{equation} \label{eq:def_X_alpha}
X^{\alpha}_t
=
\xi
+ \int_0^t b(s, X^{\alpha}_{s \wedge \cdot}, \mu^{\alpha}_s, \alpha_s) \d s
+ \int_0^t \sigma(s, X^{\alpha}_{s \wedge \cdot}, \mu^{\alpha}_s, \alpha_s) \d W_s
+ \int_0^t \sigma_0 (s, X^{\alpha}_{s \wedge \cdot}, \mu^{\alpha}_s, \alpha_s) \d B_s,
~t\le T,
\end{equation}
where $X^{\alpha}_{t \wedge \cdot}$ denotes the process $X^{\alpha}$ with frozen path after time $t$ and $\mu^{\alpha}_t := \Lc(X^{\alpha}_{t \wedge \cdot} | B)$ is the law of $X^{\alpha}_{t \wedge \cdot}$ given the path of $B$.
We will assume standard Lipschitz conditions below to ensure that the SDE \eqref{eq:def_X_alpha} has a unique strong solution for every $\alpha \in \Ac_0$.
\vspace{0.5em}
Our mean-field optimal control problem
is defined by
\begin{equation} \label{eq:def_V}
V := \sup_{\alpha \in \Ac_0} J(\alpha),
~~\mbox{with}~
J(\alpha) := \E \Big[ \int_0^T L(t, \mu^{\alpha}_t, \alpha_t ) \d t + g(\mu^{\alpha}_T) \Big].
\end{equation}
Let us emphasize that   admissible controls in $\Ac_0$ are required to be adapted to the filtration {$\G$ generated by the common noise $B$,
while those of} classical mean-field control problems are adapted to the filtration $\F$ generated by both the individual and the common noise {$(W,B)$ and the initial condition $\xi$}.
 
\begin{remark} \label{rem:reward_coeff}
{In the literature, the reward} function is often written in terms of  the expected value of $g(X^{\alpha}_{{T \wedge \cdot}}, \mu^{\alpha}_T) $.
This can in fact be equivalently reduced to  a function of $\mu^{\alpha}_T$ only. Indeed, {by the definition of $\mu^{\alpha}_T$,}
$$
\E \big[ g \big( X^{\alpha}_{{T \wedge \cdot}}, \mu^{\alpha}_T \big) \big]
~=~
\E \big[ \big \langle g(\cdot, \mu^{\alpha}_T),~\mu^{\alpha}_T \big \rangle \big].
$$
The same applies to the {running reward term, as $\alpha$ is $\G$-adapted}.
\end{remark}

\subsection{The centralized control problems with finite population}
\label{subsec:NctrlPb}
 
As an approximation of our mean-field control problem, we introduce a sequence of finite population centralized control problems.
For each $N \ge 1$, let $(\Om^N, \Fc^N, \P^N)$ be a complete probability space equipped with a Brownian motion $B$ {(which can be taken to be the same as the one introduced in the preceding section)}, a sequence of random variables $(\xi_k)_{k = 1, \ldots, N}$ and a sequence of Brownian motions $(W^k)_{k = 1, \ldots, N}$,
where all the above random elements are mutually independent and {$\xi_k \sim \nu_k \in \Pc_2(\R^d)$} for each $k=1, \ldots, N$.
Let us denote by $\F^N = (\Fc^N_t)_{t \le T}$ the filtration generated by $(B, W^1, \ldots, W^N)$ and the initial random variables $(\xi^1, \ldots, \xi^N)$,
i.e.
$$
\Fc^N_t := \sigma(\xi^1, \ldots, \xi^N, B_s, W^1_s, \ldots, W^N_s~: s \le t ),
~~ t \ge 0.
$$
Let 
$$
\Ac_N
~:=~
\Big\{
\alpha: [0,T] \x \Om^N \longrightarrow A ~: \alpha ~\mbox{is}~\F^N \mbox{-predictable and}~ \E^{\P^N} \Big[ \int_0^T \rho(\alpha_t, a_0)^2 \d t \Big] < \infty
\Big\}
$$
 be the space of all admissible controls. 
Then, given   $\alpha \in \Ac_N$, the dynamics $(X^{N, k, \alpha})_{k\le N}$ of the $N$-population of agents is 
\begin{align} \label{eq:def_Xn}
X^{N,k, \alpha}_t
=~&
\xi_k
+ \int_0^t b \big(s, X^{N,k, \alpha}_{s \wedge \cdot}, \mu^{N, \alpha}_s, \alpha_s \big) \d s
+ \int_0^t \sigma \big(s, X^{N,k, \alpha}_{s \wedge \cdot}, \mu^{N, \alpha}_s, \alpha_s \big) \d W^k_s \nonumber \\
&~~~~~~~~~~~~~~~~~~~~~~~~~~~~~~+ \int_0^t \sigma_0 \big(s, X^{N,k, \alpha}_{s \wedge \cdot}, \mu^{N, \alpha}_s, \alpha_s \big) \d B_s,
~~k=1, \ldots, N,  \\
\mu^{N, \alpha}_t :=~&
\frac1N \sum_{k=1}^N \delta_{X^{N,k, \alpha}_{t \wedge \cdot}},\;t\ge 0.\nonumber
\end{align}
The corresponding centralized optimal control problem is   then defined as 
\begin{equation} \label{eq:def_Vn0}
V^N := \sup_{\alpha \in \Ac_N} J_N(\alpha),
~~\mbox{with}~
J_N(\alpha) := \E^{\P^N} \Big[ \int_0^T L (t, \mu^{N, \alpha}_t, \alpha_t ) \d t + g(\mu^{N, \alpha}_T) \Big].
\end{equation}
\begin{remark}
$\mathrm{(i)}$ Let us emphasize  that in \eqref{eq:def_Xn}-\eqref{eq:def_Vn0}
the controller has access to the full information $\F^N$, and that all agents  share the same control process $\alpha$.
The fact that controls are $\F^N$-adapted is important to make the problem \eqref{eq:def_Vn0} time consistent, so that the classical dynamic programming approach can be applied, in particular to provide a PDE characterization and derive numerical resolution schemes.
\vspace{0.5em}
 
\noindent $\mathrm{(ii)}$ Such a {centralized optimal control} problem
appears  very naturally in many social/economic problems, in which the same control applies to all agents, 
e.g. the tax rate, retirement age, etc.
 
\end{remark}

\section{Convergence and convergence rate in a general path-dependent setting}
\label{sec:path_depend}
 
In this section, we stay in a path-dependent setting and study the propagation of chaos by using both the weak convergence approach and BSDE techniques.
 
\subsection{The weak convergence approach}
\label{subsec:weak_cvg}
 
{In this section, we prove convergence of the finite population problems to the original mean-field problem by relying on the weak convergence approach.
It has already been used in the context of classical mean-field control problems without common noise by Lacker \cite{LackerLimit},
and by Djete, Possama\"i and Tan \cite{DjeteApprox} in the common noise setting.}
The main difference here is that  control processes in \eqref{eq:def_V} are required to be adapted to the common noise filtration $\G$ only (and not to $\F$).
 
\vspace{0.5em}
 
Let us impose some technical conditions.
Recall that $A$ is a non-empty Polish space, in which we fixed an element $a_0 \in A$. Hereafter, the maps
$$
(b, \sigma, \sigma_0) : [0,T] \x \Cc^d \x \Pc_2(\Cc^d \x A) \x A \longrightarrow \R^d \x \S^d \x \S^d,
$$
and
$$
L : [0,T] \x \Pc_2(\Cc^d) \longrightarrow \R,
~~~
g: \Pc_2(\Cc^d) \longrightarrow \R,
$$
are all Borel measurable.
\begin{assumption} \label{assum:main}
$\mathrm{(i)}$ The function $(b, \sigma, \sigma_0)$ are all non-anticipative in the sense that
$$
(b, \sigma, \sigma_0) (t, \xb, \nu, a) = (b, \sigma, \sigma_0) (t, \xb_{t \wedge \cdot}, \nu, a),
$$
for all $(t, \xb, \nu, a) \in [0,T] \x \Cc^d \x \Pc_2(\Cc^d) \x A$.
Moreover, there exists a constant $C> 0$ such that
$$
\big| (b, \sigma, \sigma_0) (t, \xb, \nu, a) - (b, \sigma, \sigma_0) (t, \xb', \nu', a) \big|
\le
C \big( \| \xb - \xb' \| + \Wc_2 (\nu, \nu') \big),
$$
and
$$
\big| (b, \sigma, \sigma_0)(t, \xb, \nu, a) \big|^2
\le
C \Big( 1 + \|\xb\|^2 + \int_{\Cc^d} \| \yb\|^2 \nu(\d \yb) + \rho(a, a_0)^2 \Big),
$$
for all $(t, \xb, \xb', \nu, \nu', a) \in [0,T] \x \Cc^d \x \Cc^d \x \Pc_2(\Cc^d) \x \Pc_2(\Cc^d) \x A$.
\vspace{0.5em}

\noindent $\mathrm{(ii)}$ The function $g$ is lower semi-continuous and 
 $(\nu,a)\in\Pc_2(\Cc^d)\x A \mapsto L(t, \nu, a)$ is lower semi-continuous, for each $t\in  [0,T]$.
Moreover, there are constants $C>0$, $p > 2$ and $C_p > 0$, such that
\begin{equation} \label{eq:L_growth}
- C \Big( 1 + \int_{\Cc^d} \| \yb\|^2 \nu(\d \yb) \Big)
~\le~
L(t, \nu, a)
~\le~
C \Big( 1 + \int_{\Cc^d} \| \yb\|^2 \nu(\d \yb) \Big)
- C_p~ \rho(a, a_0)^p,
\end{equation}
and
$$
\big| g(\nu) \big|
~\le~
C \Big( 1 + \int_{\Cc^d} \| \yb\|^2 \nu(\d \yb) \Big),
$$
for all $(t, \nu, a) \in [0,T] \x \Pc_2(\Cc^d) \x A$.
\end{assumption}
 
\begin{remark}
$\mathrm{(i)}$ The Lipschitz and growth conditions on $(b, \sigma, \sigma_0)$ {in Assumption \ref{assum:main}.$\mathrm{(i)}$} ensure that the SDEs \eqref{eq:def_X_alpha} and \eqref{eq:def_Xn} have unique solutions for every admissible control $\alpha$  (see e.g.~\cite[Theorem A.3.]{DjeteDPP}).
\vspace{0.5em}

\noindent $\mathrm{(ii)}$ The growth condition on $g$ and $L$
ensures that $J(\alpha)$ in \eqref{eq:def_V} (resp. $J_N(\alpha)$ in \eqref{eq:def_Vn0}) is well defined
for all admissible controls $\alpha \in \Ac_0$ (resp. $\alpha \in \Ac_N$).
On the other hand, the term $- C_p \rho(a, a_0)^p$ (with $p > 2$) in \eqref{eq:L_growth}
induces further that the value functions $V$ and $V^N$ are all finite. {It will also ensure} the  tightness  of   maximizing sequences in \eqref{eq:def_V} and  \eqref{eq:def_Vn0}.
\end{remark}

\paragraph{A weak formulation of the control problem.}
 
Following \cite{DjeteApprox}, let us define a weak control with initial condition $\nu_0 \in \Pc_2(\R^d)$ as a term
\begin{equation} \label{eq:weak_ctrl_gamma}
\gamma
=
\big(\Om^{\gamma}, \Fc^{\gamma}, \P^{\gamma}, \F^{\gamma} = (\Fc^{\gamma}_t)_{t \le T}, \G^{\gamma} = (\Gc^{\gamma}_t)_{t \le T}, X^{\gamma}, W^{\gamma}, B^{\gamma}, \mu^{\gamma}, \alpha^{\gamma}
\big),
\end{equation}
which satisfies the following conditions:
\begin{enumerate}
\item The triple $(\Om^{\gamma}, \Fc^{\gamma}, \P^{\gamma})$ is a probability space, equipped with two filtration $\F^{\gamma}$ and $\G^{\gamma}$ such that
$$
\Gc^{\gamma}_t \subseteq \Fc^{\gamma}_t
~\mbox{and}~
\E^{\P^{\gamma}}[\1_{D} | \Gc^{\gamma}_t ] = \E^{\P^{\gamma}} [\1_{D} | \Gc^{\gamma}_T], ~\P^{\gamma}\mbox{-a.s., for all}~
D \in \Fc^{\gamma}_t \vee \sigma(W^{\gamma}),
~\mbox{and}~
t \le T.
$$
\item The process $X^{\gamma}$ is $\R^d$-valued, continuous and $\F^{\gamma}$-adapted,
and $\alpha^{\gamma}$ is a $A$-valued $\G^{\gamma}$-predictable process such that
$$
\E^{\P^{\gamma}}
\Big[ \big\| X^{\gamma} \big\|^2 + \int_0^T  \rho \big(\alpha^{\gamma}_t, a_0 \big) ^2 \d t \Big]
< 
\infty.
$$
\item The couple $(W^{\gamma}, B^{\gamma})$ is a Brownian motion w.r.t. $\F^{\gamma}$,  $B^{\gamma}$ is adapted to $\G^{\gamma}$,
$\Fc^{\gamma}_0 \vee \sigma(W^{\gamma})$ is independent of $\Gc^{\gamma}_T$,
and $\mu^{\gamma} = (\mu^{\gamma}_t)_{t\le T}$ is a $\Pc_2(\C^d)$-valued $\G^{\gamma}$-predictable process such that
$$
\mu^{\gamma} = {\left(\Lc^{\P^{\gamma}}(X^{\gamma}_{t \wedge \cdot} | \Gc^{\gamma}_t )\right)_{t\le T}},
~\d \P^{\gamma} \x \d t \mbox{-a.s.}
$$
\item The process $X^{\gamma}$ satisfies $\P^{\gamma} \circ (X^{\gamma}_0)^{-1} = \nu_0$ and 
$$
X^{\gamma}_t = X^{\gamma}_0 + \int_0^t b(s, X^{\gamma}_{s \wedge \cdot}, \mu^{\gamma}_s, \alpha^{\gamma}_s) \d s
+\int_0^t \sigma(s, X^{\gamma}_{s \wedge \cdot}, \mu^{\gamma}_s, \alpha^{\gamma}_s) \d W^{\gamma}_s
+\int_0^t \sigma_0(s, X^{\gamma}_{s \wedge \cdot}, \mu^{\gamma}_s, \alpha^{\gamma}_s) \d B^{\gamma}_s,
$$
for $t \ge 0$, $\P^{\gamma}${-a.s.}
\end{enumerate}
Let  $\Gamma$ be the collection of all weak controls. 
Then, the weak formulation of our mean-field control problem is  given by
\begin{equation} \label{eq:def_VW}
V_W := \sup_{\gamma \in \Gamma} \E^{{ \P^{\gamma}}} \Big[ \int_0^T L(t, \mu^{\gamma}_t, \alpha^{\gamma}_t ) \d t + g(\mu^{\gamma}_T) \Big].
\end{equation}
\begin{remark} \label{rem: ega fort faible}
{Let $\alpha \in \Ac_0$ be an admissible control in the filtered probability space $(\Om, \Fc, \F, \P)$ equipped with the Brownian motion $W$ and $B$, and the common noise filtration $\G = (\Gc_t)_{t \le T}$.}
Then, with $(X^{\alpha}, \mu^{\alpha})$ being defined in \eqref{eq:def_X_alpha}, it is easy to check that
$$
\big(\Om, \Fc, \P, \F, \G, X^{\alpha}, W, B, \mu^{\alpha}, \alpha \big) \in \Gamma.
$$
This implies that a strong control $\alpha \in \Ac_0$ induces a weak control, so that $V \le V_W$.
Together with the semi-continuity conditions on $L$ and $g$ in Assumption \ref{assum:main}, we will further deduce that $V = V_W$.
\end{remark}
\begin{theorem} \label{thm:weak_cvg}
Let Assumption \ref{assum:main} hold true.
Then the strong formulation and weak formulations have the same value, i.e.
$$
V_W = V.
$$
Assume in addition that $\sigma_0$ is not controlled (i.e. $\sigma_0(\cdot, a)$ is independent of $a\in A$), 
{and that there exists $p > 2$ such that $\nu_k \in \Pc_p(\R^d)$ for all $k \ge 0$ and $\Wc_p \big(N^{-1}\sum_{k=1}^{N} \nu_k , \nu_0 \big) {\longrightarrow}0$ as $N\longrightarrow \infty$.}
Then,
$$
V^N
~\longrightarrow~
V
~\mbox{as}~
N \longrightarrow \infty.
$$
\end{theorem}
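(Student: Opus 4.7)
For the equality $V_W = V$, the inequality $V \le V_W$ is already noted in Remark \ref{rem: ega fort faible}. For the reverse inequality, the plan is to show that every weak control $\gamma \in \Gamma$ can be approximated by strong controls in $\Ac_0$. Using the compatibility condition that $\Fc^{\gamma}_0 \vee \sigma(W^{\gamma})$ is independent of $\Gc^{\gamma}_T$, a standard density argument lets us approximate $\alpha^{\gamma}$ by piecewise-constant processes of the form $\sum_i \1_{[t_i, t_{i+1})} f_i(B^{\gamma}_{\cdot \wedge t_i})$ with bounded continuous $f_i$. Transferring these functionals to the canonical space $(\Om, \Fc, \P)$ equipped with its own $B$ yields a sequence $\alpha^n \in \Ac_0$ whose conditional distribution given the common noise matches that of the approximation of $\alpha^{\gamma}$. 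Wasserstein stability of the McKean--Vlasov SDE \eqref{eq:def_X_alpha}, together with the lower semi-continuity and growth bounds on $L$ and $g$ from Assumption \ref{assum:main}, delivers $\liminf_n J(\alpha^n) \ge \E^{\P^{\gamma}}[\int_0^T L(t,\mu^{\gamma}_t,\alpha^{\gamma}_t)\d t + g(\mu^{\gamma}_T)]$, and optimizing over $\gamma \in \Gamma$ gives $V \ge V_W$.

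For the lower bound $\liminf_N V^N \ge V$, fix $\varepsilon > 0$ and pick an $\varepsilon$-optimal $\alpha \in \Ac_0$ for \eqref{eq:def_V}. Since $\alpha$ is $\G$-predictable and $\G$ is the canonical filtration of $B$, $\alpha$ can be realized as a measurable functional $\Phi$ of the path of $B$. Applying $\Phi$ to the common noise $B$ on $(\Om^N, \Fc^N, \P^N)$ produces an element $\alpha^N \in \Ac_N$. Because all $N$ agents share this single control and see only independent individual noises $W^k$, the system $(X^{N, k, \alpha^N})_{k \le N}$ is conditionally exchangeable given $B$. The Lipschitz assumption on $(b, \sigma, \sigma_0)$ combined with the $\Wc_p$ convergence $\Wc_p(N^{-1} \sum_{k=1}^N \nu_k, \nu_0) \to 0$ then yields a conditional propagation of chaos estimate, namely $\Wc_2(\mu^{N, \alpha^N}_t, \mu^{\alpha}_t) \to 0$ in $\P^N$-probability together with a uniform $p$-th moment bound. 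The growth bounds and lower semi-continuity of $L$ and $g$ give $\liminf_N J_N(\alpha^N) \ge J(\alpha) \ge V - \varepsilon$ by Fatou, whence the claim on letting $\varepsilon \downarrow 0$.

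For the upper bound $\limsup_N V^N \le V$, pick a $1/N$-optimal $\alpha^N \in \Ac_N$ for each $N$. A uniform lower bound on $V^N$ (coming from the constant control $a_0$) combined with the penalization $-C_p \rho(a, a_0)^p$ in \eqref{eq:L_growth} forces $\sup_N \E^{\P^N}[\int_0^T \rho(\alpha^N_t, a_0)^p \d t] < \infty$, which together with Assumption \ref{assum:main}.$\mathrm{(i)}$ produces $p$-th moment bounds on the particles and on the empirical measure flow $\mu^{N, \alpha^N}$. Consider the joint laws on a suitable canonical space carrying one representative particle $X^{N,1,\alpha^N}$, its idiosyncratic noise $W^1$, the common noise $B$, the empirical flow $\mu^{N, \alpha^N}$, and the control $\alpha^N$ regarded as a $\Pc(A)$-valued Young measure. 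Tightness follows from the moment estimates and the penalty. Any subsequential weak limit $\P^*$ is then identified as the law of a weak control $\gamma^* \in \Gamma$: the McKean--Vlasov dynamics is recovered by passing to the limit in the associated martingale problem, while the conditional law identification $\mu^{\gamma^*}_t = \Lc^{\P^{\gamma^*}}(X^{\gamma^*}_{\cdot \wedge t}|\Gc^{\gamma^*}_t)$ and the filtration compatibility follow from the exchangeability of the $N$ particles combined with the hypothesis that $\sigma_0$ does not depend on $a$. Lower semi-continuity of $L$ and $g$ then yields $\limsup_N V^N \le \E^{\P^{\gamma^*}}[\int_0^T L(t,\mu^{\gamma^*}_t,\alpha^{\gamma^*}_t)\d t + g(\mu^{\gamma^*}_T)] \le V_W = V$.

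The hard part is the identification step in the upper bound: verifying that the limiting $\alpha^{\gamma^*}$ is $\G^{\gamma^*}$-predictable and that $\Fc^{\gamma^*}_0 \vee \sigma(W^{\gamma^*})$ is independent of $\Gc^{\gamma^*}_T$. The prelimit controls $\alpha^N$ are $\F^N$-adapted and thus depend genuinely on the full vector $(W^1,\ldots,W^N)$, but the fact that a single control is shared across $N$ exchangeable particles forces any asymmetric dependence to wash out in the de Finetti-type limit as $N \to \infty$. The assumption that $\sigma_0$ does not depend on $a$ is essential here, since it ensures that the symmetric structure of the particle system is preserved by the dynamics, so that the limiting control retains only its dependence on the common noise $B$ and on the auxiliary $\Gc^{\gamma^*}$-randomness.
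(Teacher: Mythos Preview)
Your overall strategy---compactness of $\eps$-optimal controls viewed as Young measures, followed by identification of the limit as an admissible control---matches the paper's. The difference lies entirely in how the identification step is carried out, and this is where your proposal becomes heuristic.

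The paper does not try to argue directly that the limiting control is $\G^{\gamma^*}$-predictable via a de Finetti/exchangeability argument. Instead it introduces a canonical space $\Omb$ carrying, among other things, a measure-valued control component $\Lambda$ and an empirical-measure component $\muh$, and it \emph{defines} the common-noise filtration $\Gcb$ on this space to be generated by $(\Lambda, B, \muh)$; see \eqref{eq:def_Gc}. When the $N$-particle problem is embedded via \eqref{eq:def_PN}, the control $\Lambda = \delta_{\alpha^N}$ is therefore $\Gcb$-adapted \emph{by construction}, regardless of the fact that $\alpha^N$ was originally $\F^N$-adapted. What must then be checked for the limit $\Pb^\infty$ is not an adaptedness condition on the control, but the remaining clauses of Definition \ref{def:admissible_ctrl_rule}: that $(X_0, W)$ is $\Gcb_T$-independent, that $\muh$ equals the conditional law of $(X,Y,\Lambda,W)$ given $\Gcb$, and the two martingale problems. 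These are stable under weak limits by essentially the same computations as in \cite{DjeteApprox}.

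The trade-off is that the limit $\Pb^\infty$ lies only in the \emph{relaxed} class $\Pcb_R$ (where $\Lambda$ is genuinely measure-valued), so one must separately prove $V_R = V_W = V$ via Proposition \ref{prop:density_Pcb}. It is in this density step---approximating a relaxed rule by weak/strong ones---that the hypothesis ``$\sigma_0$ not controlled'' actually enters, not as a symmetry-preservation device as you suggest. Your proposal glosses over this point: you pass to the weak limit through Young measures but then assert that the limit is a weak control $\gamma^* \in \Gamma$, whose definition requires a strict $A$-valued process $\alpha^{\gamma^*}$. Without a relaxed intermediary and the corresponding density argument, that passage is not justified, and your de Finetti heuristic does not supply it.
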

The proof is mainly adapted from  \cite{DjeteApprox}. 
It is slightly different because our controls have to be adapted to the common noise filtration $\G$.
\vspace{2mm}

In addition to the convergence of the value function $V^N$, one can also deduce the existence of an optimal relaxed control for  \eqref{eq:def_V},
as well as the weak convergence of the finite population problem to the mean-field one.
As the techniques are quite classical, these additional results and proofs are reported in Section \ref{sec:proof_weak_cvg} in Appendix.

\subsection{The Backward SDE approach}
 
While the weak convergence approach can be used to obtain a general convergence result,
it does not provide a convergence rate.
In the following, we consider the case where only the drift is controlled {and apply a BSDE approach},
from which we derive the  optimal convergence rate under  Lipschitz conditions on the reward function in the Wasserstein sense.

\paragraph{The set-up.}
	Given the filtered probability space  $(\Om, \Fc, \F, \P)$, let $S^2$ denote the space of all $\F$-adapted and continuous $\R$-valued processes $(Y_t)_{t\le T}$ such that $\E \big[ \|Y \|^2 \big] < \infty$,
and let $H^2$ denote the space of all $\F$-optional $\R^d$-valued processes $(Z_t)_{t\le T}$ such that $\E \big[ \int_0^T |Z_t|^2 dt \big] < \infty$.
\vspace{0.5em}
In the following, we consider a   setting with uncontrolled volatility coefficients. 
More precisely, we are equipped with  coefficients
$$
(b_0, \sigma_0, \sigma): [0,T] \x \Cc^d \x \Pc_2(\Cc^d) \longrightarrow \R^d \x \S^d \x \S^d,
~\mbox{and}~
b_1: [0,T] \x \Pc_2(\Cc^d) \x A \longrightarrow \R^d,
$$
such that
\begin{equation} \label{eq:def_BSDE_b}
b(s, \xb, \nu, a)
=
b_0(s, \xb, \nu)
+
\sigma_0 (s, \xb, \nu) b_1 \big(s, \nu, a \big),\;(s,\xb,\nu,a)\in [0,T]\x \Cc^d\x \Pc_2(\Cc^d)\x A. 
\end{equation}
\begin{assumption} \label{assum:drift_control}
$\mathrm{(i)}$ { $(b_0, b_1, b, \sigma_0, \sigma)$ are non-anticipative, and satisfy the same Lipschitz and growth conditions as $(b, \sigma_0, \sigma)$ in Assumption \ref{assum:main}.}
 
\vspace{0.5em}

\noindent $\mathrm{(ii)}$  $b_1: [0,T] \x \Pc_2(\Cc^d) \x A \longrightarrow \R^d$ is bounded and measurable,
and { the set $A$ is bounded in the sense that  $\sup_{a \in A} \rho(a, a_0) < \infty$.}

\vspace{0.5em}
 
\noindent $\mathrm{(iii)}$ There exists a constant $C>0$ such that,
for all $(t, \nu, a) \in [0,T] \x \Pc_2(\Cc^d) \x A$,
\begin{equation} \label{eq:L_growth}
\big| L(t, \nu, a) \big|^2
+
\big| g (\nu) \big|^2
~\le~
C \Big( 1 + \int_{\Cc^d} \| \yb\|^2 \nu(\d \yb) \Big).
\end{equation}
\end{assumption}
Then, we define $(X,\mu)$ by the uncontrolled SDE
\begin{equation} \label{eq:MKV_SDE}
X_t
=
\xi
+ \int_0^t \! b_0(s, X_{s \wedge \cdot}, \mu_s) \d s
+ \int_0^t \!\! \sigma(s, X_{s \wedge \cdot}, \mu_s) ~\d W_s
+ \int_0^t \!\! \sigma_0(s, X_{s \wedge \cdot}, \mu_s) ~\d B_s,
~t\le T,
\end{equation}
and
\begin{equation} \label{eq:def_mut}
\mu_t
=
\Lc^{\P}(X_{t \wedge \cdot} | \Gc_t)
=
\Lc^{\P}(X_{t \wedge \cdot} | \Gc_T)
=
\Lc^{\P}(X_{t \wedge \cdot} | B), ~t\le T,~ \mbox{a.s.}
\end{equation}
We refer to e.g.~\cite[Theorem A.3.]{DjeteDPP} for the existence and uniqueness of a solution to the McKean-Vlasov SDE \eqref{eq:MKV_SDE}-\eqref{eq:def_mut}.
 
\vspace{0.5em}

{Notice that  the admissible control set $\Ac_0$ now contains all $A$-valued $\G$-predictable processes, since   $A$ is assumed to be bounded  in Assumption \ref{assum:drift_control}.}

For each $\alpha \in \Ac_0$, let us define an equivalent probability measure $\Q^{\alpha}$ on $(\Om, \Fc)$ by
\begin{equation} \label{eq:def_Qa}
\frac{d \Q^{\alpha} }{d \P}
~=~
{\mathcal{E} \Big( \int_0^\cdot  b_1 \big(t, \mu_t, \alpha_t \big)~ \d B_t \Big)_T}
\end{equation}
{in which $\mathcal{E}(M )$ denotes  the Doléans-Dade's (local)  martingale associated to a (local) martingale $M$.}
It is clear that
\begin{align}\label{eq: def Balpha}
B^{\alpha}_t := B_t - \int_0^t b_1 \big(s, \mu_s, \alpha_s \big) \d s
~~\mbox{is a}~(\G, \Q^{\alpha}) \mbox{-Brownian motion},
\end{align}
and that $X$ satisfies
\begin{align} \label{eq:X_SDE}
X_t
=
\xi + \int_0^t b(s, X_{s \wedge \cdot}, \mu_s, \alpha_s) ds
&+ \int_0^t \sigma(s, X_{s \wedge \cdot}, \mu_s) dW_s \nonumber \\
&~+ \int_0^t \sigma_0(s, X_{s \wedge \cdot}, \mu_s) dB^{\alpha}_s,
~~t\le T, ~\Q^{\alpha}\mbox{-a.s.}
\end{align}
The following results shows that $\mu_t$ is also the time $t$ conditional distribution of $X$ under $\Q^{\alpha}$.
 
\begin{lemma} \label{lemm:identic_mu}
Let Assumption \ref{assum:drift_control} hold.
Then for every control process $\alpha \in \Ac_0$,
$$
\mu_t
= \Lc^{\Q^{\alpha}}(X_{t \wedge \cdot} | \Gc_t)
= \Lc^{\Q^{\alpha}}(X_{t \wedge \cdot} | \Gc_T),
~t\le T,~ \Q^{\alpha} \mbox{-a.s.}
$$
\end{lemma}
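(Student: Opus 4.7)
The plan is to observe that the key feature of the measure change is that the Radon–Nikodym derivative is measurable with respect to the common noise filtration, so conditional laws given that filtration are preserved.

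First I would check that $\frac{d\Q^\alpha}{d\P}$ is $\Gc_T$-measurable. The integrand $s\mapsto b_1(s,\mu_s,\alpha_s)$ is $\G$-predictable because $\alpha\in\Ac_0$ is $\G$-predictable by definition, $\mu$ is $\G$-predictable by \eqref{eq:def_mut}, and $b_1$ is a deterministic measurable function; moreover $b_1$ is bounded by Assumption~\ref{assum:drift_control}(ii), so the stochastic exponential in \eqref{eq:def_Qa} is a true $(\G,\P)$-martingale (e.g.\ by Novikov), and $\frac{d\Q^\alpha}{d\P}$ is $\Gc_T$-measurable as the terminal value of a $\G$-adapted process.

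Next I would exploit the abstract Bayes formula. For any bounded measurable $\varphi:\Cc^d\to\R$, writing $Z:=\frac{d\Q^\alpha}{d\P}$ and using that $Z$ is $\Gc_T$-measurable with $\E^\P[Z\mid\Gc_T]=Z$,
\begin{equation*}
\E^{\Q^\alpha}\!\bigl[\varphi(X_{t\wedge\cdot})\mid\Gc_T\bigr]
=\frac{\E^\P[Z\,\varphi(X_{t\wedge\cdot})\mid\Gc_T]}{\E^\P[Z\mid\Gc_T]}
=\E^\P\!\bigl[\varphi(X_{t\wedge\cdot})\mid\Gc_T\bigr]
=\langle\mu_t,\varphi\rangle,
\end{equation*}
where the last equality is \eqref{eq:def_mut}. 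Since this holds for a measure-determining class of $\varphi$, it gives $\Lc^{\Q^\alpha}(X_{t\wedge\cdot}\mid\Gc_T)=\mu_t$, $\Q^\alpha$-a.s.

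Finally, to pass from $\Gc_T$ to $\Gc_t$, note that $\mu_t$ is itself $\Gc_t$-measurable (being $\G$-predictable from the definition of the McKean–Vlasov SDE), so by the tower property and $\Gc_t\subseteq\Gc_T$,
\begin{equation*}
\E^{\Q^\alpha}\!\bigl[\varphi(X_{t\wedge\cdot})\mid\Gc_t\bigr]
=\E^{\Q^\alpha}\!\bigl[\E^{\Q^\alpha}[\varphi(X_{t\wedge\cdot})\mid\Gc_T]\mid\Gc_t\bigr]
=\E^{\Q^\alpha}\!\bigl[\langle\mu_t,\varphi\rangle\mid\Gc_t\bigr]
=\langle\mu_t,\varphi\rangle,
\end{equation*}
which is the required identity. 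The only real obstacle is the verification that $Z$ is $\Gc_T$-measurable (and a genuine density); once that is pinned down via the $\G$-predictability of $(\mu,\alpha)$ and the boundedness of $b_1$, the rest is an entirely routine consequence of the Bayes formula and the defining property \eqref{eq:def_mut} of $\mu$ under $\P$.
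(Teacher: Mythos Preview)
Your proof is correct. Both your argument and the paper's hinge on the same key observation: the density $Z=\d\Q^{\alpha}/\d\P$ is $\Gc_T$-measurable. From there the two routes diverge slightly. The paper invokes the strong-solution representation $X=\Psi(\xi,W,B)$ (via \cite{Kurtz}) and argues that, since $(\xi,W)$ is independent of $\Gc_T$ under $\P$ and the density lies in $\Gc_T$, the conditional law of $(\xi,W)$ given $\Gc_T$ is unchanged under $\Q^{\alpha}$; the conclusion then follows because $B$ is $\Gc_T$-measurable. Your argument is more direct: you apply the abstract Bayes formula, observe that a $\Gc_T$-measurable density cancels out of the ratio, and then use the tower property together with the $\Gc_t$-measurability of $\mu_t$ to descend from $\Gc_T$ to $\Gc_t$. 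Your route is arguably more elementary, as it avoids the measurable-selection/strong-solution machinery; the paper's route is a bit more structural, making explicit that the whole input $(\xi,W)$ driving the idiosyncratic part of $X$ has unchanged conditional law.
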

\begin{proof}
By \cite[Theorem A.3.]{DjeteDPP}, the solution to \eqref{eq:MKV_SDE}-\eqref{eq:def_mut} is unique,
and $ \mu$ is adapted to the filtration generated by $B$.
Further, by {\cite[Theorem 1.5]{Kurtz},} there exists a Borel measurable function $\Psi: \R^d \x \Cc^d \x \Cc^d \longrightarrow \Cc^d$
such that
$$
X = \Psi(\xi, W, B),
~ \mbox{a.s.}
$$
Since the density $\d \Q^{\alpha} / \d \P \in \Gc_T$ and $(\xi, W)$ is independent of $\Gc_T$ under $\P$,
it is clear that
$$
\Lc^{\P} \big((\xi, W) | \Gc_T \big)
~=~
\Lc^{\Q^{\alpha}} \big( (\xi, W) | \Gc_T \big),
~\mbox{a.s.}
$$
This is enough to conclude the proof.
\end{proof}

\paragraph{Representation of the mean-field control problem by a BSDE.}
 
Let us consider the mean-field control problem: 
\begin{equation} \label{eq:BSDE_value_0}
\Vb_0
~:=~
\sup_{\alpha \in \Ac_0}
\E^{\Q^{\alpha}} \Big[ \int_0^T L(t, \mu_t, \alpha_t) dt + g( \mu_T) \Big].
\end{equation}

\begin{remark}
\label{rem:BSDE_ctrl}
When $\nu_0 \in \Pc_p(\R^d)$ with $p > 2$, $(b, b_1, \sigma, \sigma_0, L, g)$ satisfy the same continuity and growth conditions as $(b, \sigma, \sigma_0, L ,g)$ in Assumption \ref{assum:main},
one can deduce that  \eqref{eq:BSDE_value_0} is equivalent to the strong and weak formulation   in \eqref{eq:def_V} and \eqref{eq:def_VW}.
Indeed, given a control $\alpha \in \Ac_0$, the term
$(\Om, \Fc, \Q^{\alpha}, \F, \G, X, \mu, \alpha)$ is a weak control term in the sense of \eqref{eq:weak_ctrl_gamma}, so that $ \Vb_0\le V_W = V$, where the last equality follows from Theorem \ref{thm:weak_cvg}.
Next, as shown in the proof of  Theorem \ref{thm:weak_cvg}  below (see in particular Proposition \ref{prop:density_Pcb}), any strong control  $\alpha \in \Ac_0$ can be approximated by piecewise constant strong controls $(\alpha^n)_{n \ge 1} \in \Ac_0^{\circ}$ (see \eqref{eq:PcbS0} for a precise definition).
For any such piecewise constant strong control $\alpha^n$, one can then easily construct a control $\bar \alpha^n \in \Ac_0$ such that
$$
\Lc^{\P} \big(B, \alpha^n, X^{\alpha^n}, \mu^{\alpha^n} \big)
~=~
\Lc^{\Q^{\bar \alpha^n}} \big( B^{\bar \alpha^n}, \bar \alpha^n, X, \mu \big).
$$
This is due to the fact that any piecewise strong control $\alpha^n$ can be viewed as a function of the stochastic process $\int_0^{\cdot} b_1(s, \mu_s, \alpha^n_s) \d s + B_{\cdot} $, say  $\alpha^n_{\cdot}=\alpha^n_{\cdot} \big(\int_0^{\cdot} b_1(s, \mu_s, \alpha^n_s) \d s + B_{\cdot} \big) $, to which we can associate $\bar \alpha^n_{\cdot} := \alpha^n_{\cdot} \big(\int_0^{\cdot} b_1(s, \mu_s, \bar  \alpha^n_s) \d s + B^{\bar \alpha^n}_{\cdot} \big) $.
This implies that $ \Vb_0 \ge V$. Hence,
$$
	\Vb_0 = V_W = V.
$$
\end{remark}

Next, let us define the Hamiltonian $H: [0,T] \x \Pc_2(\Cc^d) \x \R^d \longrightarrow \R$ by
\begin{equation} \label{eq:defH}
H(\cdot,z)  ~:=~ \sup_{a \in A} \Big( L(\cdot, a) + b_1 (\cdot, a) \cdot z \Big),\;z\in \R^d.
\end{equation}
 
\begin{theorem}
Let Assumption \ref{assum:drift_control} hold true.
Then, the following BSDE
\begin{equation} \label{eq:BSDE}
Y_t = g(\mu_T) - \int_t^T H(s, \mu_s, Z_s) ds - \int_t^T Z_s \cdot dB_s, ~~t\le T, ~\P\mbox{-a.s.}
\end{equation}
has a unique solution $(Y, Z) \in S^2 \x H^2$, and
$$
Y_0 = \Vb_0,
~\mbox{a.s.}
$$
Moreover, a control process $\alpha^* \in \Ac_0$ is   optimal   for \eqref{eq:BSDE_value_0} if and only if
\begin{equation} \label{eq:cond_equality_H}
H(\cdot, \mu, Z)
=
L(\cdot, \mu, \alpha^*) + b_{{1}} (\cdot, \mu, \alpha^*) \cdot Z,
~\d \P \x \d t ~\mbox{-a.e.  on $[0,T]$.}
\end{equation}
\end{theorem}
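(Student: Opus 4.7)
The plan is to proceed in three stages: (i) solve the BSDE by standard Pardoux--Peng theory, (ii) identify $Y_0$ with $\Vb_0$ via a change of measure $\P\to\Q^{\alpha}$, and (iii) extract the optimality characterization from the resulting pointwise duality gap.

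\textbf{Well-posedness of the BSDE.} The driver $z\mapsto H(s,\mu_s,z)$ is Lipschitz in $z$ with constant $\|b_1\|_{\infty}<\infty$, as a supremum of $z$-affine functions with uniformly bounded slope (Assumption~\ref{assum:drift_control}(ii)). The growth bound on $L$ in \eqref{eq:L_growth} yields $|H(s,\mu_s,0)|\le C(1+\langle\mu_s,\|\cdot\|^2\rangle)$, which lies in $L^2$ because $\E[\|X\|^2]<\infty$; similarly $g(\mu_T)\in L^2(\Gc_T)$. Since $\mu$ is $\G$-adapted and $B$ is a $\G$-Brownian motion, the BSDE can be solved inside the filtration $\G$, producing a unique $\G$-predictable $(Y,Z)\in S^2\times H^2$ with deterministic $Y_0$.

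\textbf{Identification $Y_0=\Vb_0$.} Fixing $\alpha\in\Ac_0$, I would substitute $\int Z\,dB=\int Z\,dB^{\alpha}+\int Z\cdot b_1(s,\mu_s,\alpha_s)\,ds$ from \eqref{eq: def Balpha} into \eqref{eq:BSDE} and take $\E^{\Q^{\alpha}}$ at $t=0$. This is legitimate because boundedness of $b_1$ and Novikov's criterion give every moment of $d\Q^{\alpha}/d\P$, so that $\int Z\,dB^{\alpha}$ remains a true $\Q^{\alpha}$-martingale. The resulting identity reads (up to the sign convention of the statement)
\[
Y_0=\E^{\Q^{\alpha}}\Big[g(\mu_T)+\int_0^T\big(H(s,\mu_s,Z_s)-Z_s\cdot b_1(s,\mu_s,\alpha_s)\big)\,ds\Big].
\]
The defining supremum forces $H(s,\mu_s,Z_s)-Z_s\cdot b_1(s,\mu_s,\alpha_s)\ge L(s,\mu_s,\alpha_s)$, hence $Y_0\ge J(\alpha)$ and therefore $Y_0\ge\Vb_0$. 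For the converse, I would apply the Kuratowski--Ryll-Nardzewski selection theorem to the multifunction $(s,\omega)\rightrightarrows\{a\in A:L(s,\mu_s,a)+b_1(s,\mu_s,a)\cdot Z_s\ge H(s,\mu_s,Z_s)-\eps\}$ over the $\G$-predictable $\sigma$-algebra to obtain, for every $\eps>0$, a $\G$-predictable $\eps$-maximizer $\alpha^{\eps}$; it belongs to $\Ac_0$ because $A$ is bounded. Re-running the preceding computation gives $Y_0\le J(\alpha^{\eps})+\eps T\le\Vb_0+\eps T$, and letting $\eps\downarrow 0$ closes the loop.

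\textbf{Optimality and main obstacle.} The identity above rewrites the duality gap as
\[
Y_0-J(\alpha)=\E^{\Q^{\alpha}}\Big[\int_0^T\big(H(s,\mu_s,Z_s)-L(s,\mu_s,\alpha_s)-b_1(s,\mu_s,\alpha_s)\cdot Z_s\big)\,ds\Big]\ge 0,
\]
with pointwise non-negative integrand. Since $\Q^{\alpha}\sim\P$, optimality $J(\alpha^*)=\Vb_0=Y_0$ is equivalent to this integrand vanishing $\d\P\times\d t$-a.e., which is precisely \eqref{eq:cond_equality_H}. The delicate step is the measurable $\eps$-selection: with $L(t,\nu,\cdot)$ only lower semi-continuous and without compactness of $A$, exact maximizers of $a\mapsto L+b_1\cdot z$ need not exist, so one must carefully verify the $\G$-predictable measurability of the $\eps$-optimal correspondence together with non-emptiness and closedness of its values, and check that the selector inherits enough integrability to belong to $\Ac_0$.
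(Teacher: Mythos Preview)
Your argument is correct and complete; the only wrinkle is the sign in the BSDE (which you flag), and the integrability of $\int Z\,dB^{\alpha}$ under $\Q^{\alpha}$, which indeed follows from $d\Q^{\alpha}/d\P\in L^p$ for all $p$ together with BDG and Cauchy--Schwarz.

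The route, however, differs from the paper's in the inequality $Y_0\ge \Vb_0$. You go the direct El~Karoui--Peng--Quenez way: Girsanov turns the BSDE into the identity $Y_0=\E^{\Q^{\alpha}}[g(\mu_T)+\int_0^T(H-b_1\!\cdot\!Z)\,ds]$, and the pointwise bound $H\ge L+b_1\!\cdot\!Z$ immediately gives $Y_0\ge J(\alpha)$ for every $\alpha$. The paper instead defines the dynamic value process $\Vb_t$, invokes the dynamic programming principle to obtain that $\Vb_t+\int_0^t L\,ds$ is a $(\G,\Q^{\alpha})$-supermartingale for each $\alpha$, then uses Doob--Meyer and martingale representation to produce a process $\overline Z$ with $\Vb_t\le g(\mu_T)+\int_t^T(L+b_1\!\cdot\!\overline Z)\,ds-\int_t^T\overline Z\,dB$, and concludes via the BSDE comparison principle that $\Vb_t\le Y_t$ for all $t$. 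Your approach is more elementary (no DPP, no Doob--Meyer, no comparison) and yields exactly what the statement asks. The paper's detour buys the stronger identification $\Vb_t=Y_t$ for every $t\in[0,T]$, not just $t=0$, which is natural given that the BSDE is intrinsically a dynamic object. For the reverse inequality $Y_0\le \Vb_0$ and the optimality characterization, both proofs coincide: an $\eps$-maximizer is selected measurably and plugged back through Girsanov.
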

\begin{proof}
First,   notice that $z \longmapsto H(\cdot, z)$ is globally Lipschitz as $b_1$ is uniformly bounded.
Moreover,   the growth conditions on $g$ and $L$ in \eqref{eq:L_growth} imply that 
$$
\E^{\P} \Big[ \big| g( \mu_T) \big|^2 + \int_0^T \big| H(t, \mu_t, 0 ) \big|^2 dt \Big] < \infty.
$$
It follows that   \eqref{eq:BSDE} has a unique solution in $ S^2 \x H^2$.
 
\vspace{0.5em}
Next, let us define  $\Vb = (\Vb_t)_{t\le T}$ by
$$
\Vb_t ~:=~ \mathrm{ess\!\!}\sup_{\alpha \in \Ac_0}
\E^{\Q^{\alpha}} \Big[
\int_t^T L(s, \mu_s, \alpha_s) \d s + g( \mu_T)
~\Big| \Gc_t \Big].
$$
Then, by the dynamic programming principle, see e.g.~{\cite[Theorem 3.2]{DjeteDPP},}
$$
\Vb_t ~:=~ \mathrm{ess\!\!}\sup_{\alpha \in \Ac_0}
\E^{\Q^{\alpha}} \Big[
\int_t^{\tau} L(s, \mu_s, \alpha_s) \d s + \Vb_{\tau}
~\Big| \Gc_t \Big]
$$
for any time $t\le T$ and $\G$-stopping time $\tau$ taking values in $[t,T]$.
Moreover, as $b_1$ is uniformly bounded, and $g$ and $L$ have linear growth in $(\xb, \nu)$ (see \eqref{eq:L_growth}),
$\Vb$ is a square integrable process under any $\Q^{\alpha}$.
Thus, for any $\alpha \in \Ac_0$, the process
$$
\Big( \Vb_t + \int_0^t L(s, \mu_s, \alpha_s) ds \Big)_{t\le T}
~\mbox{is a}~{(\G,\Q^{\alpha})} \mbox{-supermartingale}.
$$
It thus follows from the Doob-Meyer's decomposition and   the martingale representation theorem that
there exists a process $\overline Z$ such that
$$
\Vb_t
\le
g(\mu_T) + \int_t^T \!\!\! \big( L(s, \mu_s, \alpha_s) + b_{{1}}(s, \mu_s, \alpha_s) \cdot \overline Z_s \big) ds
- \int_t^T\!\!\! \overline Z_s \cdot \d B_s,
~t \in [0,T],
~\mbox{for all}~
\alpha \in \Ac_0.
$$
Recalling the definition of $H$ in \eqref{eq:defH},   
it follows from the comparison principle  for BSDEs that { $ \Vb_t \le Y_t$ for all $t \in [0,T]$.}
 
\vspace{0.5em}
Next, for any $\eps > 0$, one can select {an admissible} control process $\alpha^{\eps}$ such that
\begin{equation} \label{eq:BSDE_comparison}
L (\cdot, \mu , \alpha^{\eps}) + b_{{1}} (\cdot, \mu , \alpha^{\eps}) \cdot Z 
~\ge~
H( \cdot, \mu , Z ) - \eps,
~~\d \P \x \d t \mbox{-a.e.}
\end{equation}
In view of \eqref{eq: def Balpha}, it follows that 
{
$$
	Y_t ~\le~ \E^{\Q^{\alpha^{\eps}}} \Big[ \int_t^T L(s, \mu_s, \alpha^{\eps}_s) \d s + g(\mu_T) ~\Big| \Gc_t \Big] + \eps (T-t) 
	~\le~ \Vb_t + \eps (T-t), ~\mbox{a.s.}
$$
Therefore,  $ \Vb_t = Y_t$, a.s., for all $t \in [0,T]$.}
Moreover, the above argument shows that a control $\alpha^*$ is an optimal control if and only if the inequality in \eqref{eq:BSDE_comparison} becomes an equality, or equivalently, \eqref{eq:cond_equality_H} holds true.
\end{proof}

\begin{remark}
Let us consider the Markovian setting in which 
the dependance of the coefficient  $\big( b_0, b, \sigma_0, \sigma, L, g \big) (t, \xb, \nu_t, \cdot)$ on $\xb$ is only through $\xb_t$
and their dependance on $\nu_t = \Lc(X_{t \wedge \cdot}|B)$ is only through $\nu^0_t := \Lc(X_t|B)$  at time $t$.
Then, it is easy to construct a measurable map $U: [0,T] \x \Pc_2(\R^d) \longrightarrow \R$ such that
$$
Y_t = U(t, \nu^0_t),
~~\mbox{with}~\nu^0_t := \Lc^{\P}(X_t|B),
~~\mbox{for}~t\le T,~\P\mbox{-a.s.}
$$
Assume that $U \in C^{0, 1}$,
then it follows from the $C^1$-It\^o's formula {\cite[Theorem 2.3]{BouchardTanWang}} that
$$
(Z_t)_{t\le T}
=
\big(\big \langle \nu^0_t, { D_m U(t, \nu^0_t, \cdot) \sigma_0(t, \cdot, \nu^0_t)} \big \rangle \big)_{t\le T},
~\d t \x \d \P\mbox{-a.e.},
$$
in which the derivative operator $D_m$ is defined as in \eqref{eq:def_DmF} below.
\end{remark}

\paragraph{The propagation of chaos.}
 
To study the propagation of chaos, we   define the controlled particle system on the same filtered probability space 
$(\Om, \Fc, \F, \P)$ equipped with $(\xi, W, B)$. First, we enlarge it in order to include in addition a sequence of $(\xi_k, W^k)_{k \ge 1}$ such that
$\xi_k \sim \nu_k \in \Pc_2(\R^d)$, $\xi_k \in \Fc_0$ and $W^k$ is a $\F$-standard Brownian motion, for each $k\ge 1$.
By abuse of notation, the enlarged space is still denoted by $(\Om, \Fc, \F, \P)$.
Moreover, $(\xi, W, B)$ and $(\xi_k, W^k, B^k)_{k \ge 1}$ are mutually independent.

We define $(X^{N,k})_{k\le N}$ by the (uncontrolled) system: 
\begin{align*}
X^{N,k}_t
=~&
X^k_0
+ \int_0^t \! b_0 (s , X^{N,k}_{s\wedge \cdot}, \mu^N_s) \d s
+ \int_0^t \!\! \sigma(s , X^{N,k}_{s\wedge \cdot}, \mu^N_s) \d W^k_s
+ \int_0^t \!\! \sigma_0(s, X^{N,k}_{s\wedge \cdot}, \mu^N_s) \d B_s, \\
\mbox{ with }\mu^N_t
:=~& \frac1N \sum_{k=1}^N \delta_{X^{N,k}_{t \wedge \cdot}},\; t\le T,\; N\ge 1.
\end{align*}
Next,  we assume that $\sigma_0$ is non-degenerate and define,  for each $N \ge 1$, $(Y^N, {Z^{N,1},\ldots,Z^{N,N}})$ as the solution to the BSDE
\begin{align} \label{eq:BSDE_YN}
Y^N_t &= g( \mu^N_T)
- \!\int_t^T \!\! H(s, \mu^N_s, Z^N_s) \d s \\
&~~~~~ - \sum_{{k=1}}^N \int_t^T \!\!
\big( { \sigma_0^{-1}\sigma }\big) (s, X^{N,k}_{s \wedge \cdot}, \mu^{{N}}_s) Z^{N,k}_s \cdot
\d W^k_s
- \int_t^T Z^N_s \cdot \d B_s, \;t\le T,\nonumber\\
\mbox{ with } Z^N &:= \sum_{k=1}^N Z^{N,k}.
\end{align}
 To see that  existence and uniqueness of a solution  $(Y^N, {Z^{N,1},\ldots,Z^{N,N}}) \in \S^2\x (\H^2)^N$ to the above BSDE hold under Assumption \ref{assum:drift_control}, it suffices to refer to the classical BSDEs' theory using the martingale representation applied to 
$(  \int_0^\cdot\big( \sigma_0^{-1} \sigma \big) \big(s, X^{N,k}_{s \wedge \cdot}, \mu^{N}_s \big) \d W^k_s + B)_{k = 1, \ldots, N}$.
By standard arguments, $Y^N$ provides the value process of the following optimal control problem:
\begin{equation} \label{eq:def_VbN}
\Vb^N_0
:=
\sup_{\alpha \in \Ac_N}
\E^{\Q^{N,\alpha}} \Big[
\int_0^T L \big(t, \mu^{N, \alpha}_t, \alpha_t \big) dt + g \big( \mu^{N, \alpha}_T \big)
\Big],
\end{equation}
where, by a slight abuse of  the notations in Section \ref{subsec:NctrlPb}, we denote by $\Ac_N$ the space of all $A$-valued $\F^N$-predictable processes with $\F^N$ being the filtration generated by $(B, W^1, \ldots, W^N)$, and for each $\alpha \in \Ac_N$,  $\Q^{N, \alpha}$ is the probability measure equivalent to $\P$, such that $\Wb = (\Wb^1, \ldots, \Wb^N)$ is a $\Q^{N, \alpha}$-Brownian motion, with
$$
\Wb^k:= \int_0^\cdot b_1 \big(X^{N,k}_{s \wedge \cdot}, \mu^N_s \big) \d s + \int_0^\cdot  \big( \sigma_0^{-1} \sigma \big) \big(s, X^{N,k}_{s \wedge \cdot}, \mu^{N}_s \big) \d W^k_s + B, ~~k = 1, \ldots, N.
$$
In particular, with $b$ given as in \eqref{eq:def_BSDE_b}, one has
\begin{align*}
X^{N,k}
=~&
\xi_k
+ \int_0^\cdot  b \big(s, X^{N,k}_{s \wedge \cdot}, \mu^{N}_s, \alpha_s \big) \d s
+ \int_0^\cdot \sigma_0 \big(s, X^{N,k}_{s \wedge \cdot}, \mu^{N}_s \big) \d \Wb^k_s,
~~k=1, \ldots, N.
\end{align*}
As in Remark \ref{rem:BSDE_ctrl}, {under  suitable semi-continuity conditions on $L$ and $g$}, it is easy to show that the above control problem $\Vb^N_0$ is equivalent to its strong formulation ${V^N}$ in \eqref{eq:def_Vn0}. {Then, the following result, based on standard BSDE technics, provides a convergence rate for $V^N$ towards $V$. }

\begin{theorem} \label{thm:BSDE_rate}
Let Assumption \ref{assum:drift_control} hold true { and suppose that $\sigma_0$ is non-degenerate at each point of $[0,T] \x \Cc^d \x \Pc_2(\Cc^d)$.} 
Then, for each $N \ge 1$, one has {$\Vb_0^N = Y^N_0$ and $\alpha^{{N}*}\in \Ac_N$ is optimal for  \eqref{eq:def_VbN}}
if and only if
$$
H(\cdot, \mu^N, Z^N)
=
L(\cdot, \mu^N, \alpha^{{N}*}) + b_{{1}}(\cdot, \mu^N, \alpha^{{N}*}) \cdot Z^N,
~\d \P \x \d t ~\mbox{-a.e. }\mbox{ on $[0,T]$.}
$$
Assume in addition that there exists a constant $C_0 > 0$ such that,
for all $(t, \xb, \nu_1, \nu_2, z) \in [0,T] \x \Cc^d \x \Pc_2(\Cc^d) \x \Pc_2(\Cc^d) \x \R^d$,
$$
\big| H(t, \nu_1, z) - H(t, \nu_2, z) \big|
+
\big| g(\nu_1) - g(\nu_2) \big|
~\le~
C_0 \Wc_2(\nu_1, \nu_2).
$$
Then, there exists a constant $C> 0$, independent of $N$, such that
\begin{align*}
&\E \Big[\|Y^N-Y\|^2 \Big]
+
\sum_{k=1}^N \E \Big[ \int_t^T \big| (\sigma_0^{-1} \sigma) (s, X^{N,k}_{s \wedge \cdot}, \mu^N_s) Z^{N,k}_s \big|^{2}ds \Big]
+
\E \Big[ \int_t^T \big| Z^N_s-Z_{s} \big|^{2}ds \Big] \\
\le&~ C ~ \E \Big[ \Wc_2 \big( \mu^N_T, \mu_T \big)^2 \Big].
\end{align*}
\end{theorem}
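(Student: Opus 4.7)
The proof splits naturally into two parts: the BSDE representation/optimality characterization, and the stability estimate.

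For the first part, the strategy is identical to the proof of the preceding theorem, but transposed to the $N$-particle setup. The value process is $\Vb_t^N := \operatorname{ess\,sup}_{\alpha \in \Ac_N} \E^{\Q^{N,\alpha}}[\int_t^T L(s,\mu^{N,\alpha}_s,\alpha_s)\,ds + g(\mu^{N,\alpha}_T)\mid \Fc^N_t]$. By the dynamic programming principle, $(\Vb_t^N + \int_0^t L(s,\mu^N_s,\alpha_s)\,ds)$ is a $(\F^N, \Q^{N,\alpha})$-supermartingale for every $\alpha$, with equality attained by an $\eps$-optimizer. The only difference is that, by Doob--Meyer together with the martingale representation theorem applied relative to the enlarged driving Brownian motion $(B, W^1, \ldots, W^N)$, the martingale part has components against each $W^k$ as well as against $B$. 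Using the non-degeneracy of $\sigma_0$ to rewrite the $\Q^{N,\alpha}$--drifts in terms of $\Wb^k$, then inverting the change of measure, one obtains exactly the BSDE \eqref{eq:BSDE_YN}, with the Hamiltonian $H$ computed pointwise against the coefficient $Z^N$ of the $B$-component alone (the other components are orthogonal to the control). A standard comparison argument then identifies $Y^N_0 = \Vb^N_0$ and yields the stated characterization of optimal controls.

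For the convergence rate, the plan is to apply It\^o's formula to $|Y^N_t - Y_t|^2$ on the enlarged probability space. Since $Y$ is $\G$-adapted and its martingale part is driven only by $B$, while $Y^N$ has martingale components against $B$ and each $W^k$, mutual independence of $(B,W^1,\ldots,W^N)$ gives
\begin{align*}
\E|Y^N_t - Y_t|^2
&+ \E\!\int_t^T\!|Z^N_s - Z_s|^2 ds
+ \sum_{k=1}^N\E\!\int_t^T\!\big|(\sigma_0^{-1}\sigma)(s,X^{N,k}_{s\wedge\cdot},\mu^N_s)Z^{N,k}_s\big|^2 ds \\
&= \E|g(\mu^N_T)-g(\mu_T)|^2
+ 2\E\!\int_t^T\!(Y^N_s-Y_s)\big[H(s,\mu^N_s,Z^N_s)-H(s,\mu_s,Z_s)\big] ds.
\end{align*}
Using the assumed Lipschitz property of $H$ and $g$ in the measure argument (in $\Wc_2$), together with the Lipschitz property of $H$ in $z$ (coming from the boundedness of $b_1$), and Young's inequality to absorb half of $|Z^N_s - Z_s|^2$ on the left, one obtains
\[
\E|Y^N_t - Y_t|^2 \le C\,\E\,\Wc_2(\mu^N_T,\mu_T)^2 + C\!\int_t^T\!\E\,\Wc_2(\mu^N_s,\mu_s)^2 ds + C\!\int_t^T\!\E|Y^N_s - Y_s|^2 ds.
\]

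The key auxiliary observation is that $s \mapsto \Wc_2(\mu^N_s,\mu_s)$ is non-decreasing: pushing any optimal coupling of $(\mu^N_T,\mu_T)$ through the stopping map $\xb \mapsto \xb_{s\wedge\cdot}$ produces a coupling of $(\mu^N_s,\mu_s)$ whose cost (under the sup-norm) is dominated by the original cost. Hence $\int_0^T \E\,\Wc_2(\mu^N_s,\mu_s)^2 ds \le T\,\E\,\Wc_2(\mu^N_T,\mu_T)^2$, and Gronwall's lemma gives the desired bound on $\sup_t \E|Y^N_t-Y_t|^2$ and on the two $Z$-norms. To upgrade to $\E\|Y^N - Y\|^2$, one applies the Burkholder--Davis--Gundy inequality to the stochastic integrals against $B$ and the $W^k$'s that appear in $Y^N_t - Y_t$, controlling them by the already bounded quadratic-variation terms. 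The main technical obstacle is precisely the bookkeeping of the orthogonal $W^k$-martingale terms in It\^o's formula and ensuring that their quadratic variation ends up \emph{on the left-hand side} of the estimate (contributing to the sum we want to bound), rather than needing to be controlled separately — which is exactly what the independence of $(B,W^1,\ldots,W^N)$ under $\P$ delivers.
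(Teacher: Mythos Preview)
Your proposal is correct and follows essentially the same approach as the paper: the paper simply cites \cite[Proposition 3.2]{EKPQ} for the identification $\Vb^N_0=Y^N_0$ and the optimality characterization, and \cite[Proposition 2.1]{EKPQ} for the stability estimate, together with the observation that $t\mapsto\Wc_2(\mu^N_t,\mu_t)$ is non-decreasing. Your write-up unpacks exactly these standard arguments (It\^o's formula, Young's inequality, Gronwall, BDG), and you correctly isolate the two key points---the orthogonality of the $W^k$-martingale parts to the control so that the Hamiltonian acts on $Z^N$ alone, and the monotonicity of the Wasserstein distance under path-stopping.
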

\begin{proof}
The equality {$\Vb^N_0=Y^N_0$} is standard,  see e.g.~\cite[Proposition 3.2]{EKPQ}. Next, the estimate follows from   the stability  of  BSDEs (see e.g.~\cite[Proposition 2.1]{EKPQ}) and the Lipschitz continuity of the coefficients, observing that $t\mapsto \Wc_2 \big( \mu^N_t, \mu_t \big)$ is non-decreasing.
\end{proof}

\begin{remark}
$\mathrm{(i)}$
The main results in Theorem \ref{thm:BSDE_rate} shows that the convergence rate for the optimal control problem is the same as in  the linear case, 
$\E \Big[ \Wc_2 \big( \mu^N_T, \mu_T \big)^2 \Big]$,
since $\mu$ and $\mu^N$ are all induced by uncontrolled SDEs.
In this sense, it is sharp.
\vspace{0.5em}

\noindent $\mathrm{(ii)}$
To estimate the convergence rate of $\E \Big[ \Wc_2 \big( \mu^N_{{T}}, \mu_{{T}} \big)^2 \Big]$,
a classical way consists in introducing a sequence of processes $(\Xb^k)_{k \ge 1}$, where each $\Xb^k$ is defined as in \eqref{eq:MKV_SDE} but driven by $(W^k, B)$ and  with initial condition $\xi_k $, i.e.
$$
\Xb^k_t
=
\xi_k
+ \int_0^t \! b_0(s, \Xb^k_{s \wedge \cdot}, \mu_s) \d s
+ \int_0^t \!\! \sigma(s, \Xb^k_{s \wedge \cdot}, \mu_s) ~\d W^k_s
+ \int_0^t \!\! \sigma_0(s, \Xb^k_{s \wedge \cdot}, \mu_s) ~\d B_s,
$$
with $\mu_s = \Lc(\Xb^k_{s \wedge \cdot} | B) = \Lc(X_{s \wedge \cdot} |B)$.
We next define $\mub_t^{{N}} := \frac1N \sum_{k=1}^N \delta_{\Xb^k_{t \wedge \cdot}}$, $t\le T$, and consider the inequality
\begin{equation} \label{ineq:mun_mu}
\E \Big[ \Wc_2 \big( \mu^N_T, \mu_T \big)^2 \Big]
~\le~
2 \E \Big[ \Wc_2 \big( \mu^N_T, \mub^N_T \big)^2 \Big] + 2 \E \Big[ \Wc_2 \big( \mub^N_T, \mu_T \big)^2 \Big].
\end{equation}
When $(\xi_k)_{k \ge 1}$ is i.i.d. and follows the same distribution as $\xi$, i.e. $\nu_k = \nu_0$ for all $k \ge 1$,
it follows from standard analysis (see e.g.~\cite{McKean} or  \cite{Sznitman})
that the first term satisfies
$$
\E \Big[ \Wc_2 \big( \mu^N_T, \mub^N_T \big)^2 \Big]
~\le~
\frac{C}{N},\;N\ge 1,
~\mbox{for some constant}~
C> 0.
$$
The second term on the r.h.s.~of \eqref{ineq:mun_mu} is the distance between the (conditional) law $\mu_T$ and the empirical law induced by (conditional) i.i.d. random elements as in the Law of Large Number.
In the Markovian setting (i.e.~when $\mu_t$ just denotes the  marginal distribution of $X_t$), and when $\mu_t$ has finite $p$-moment for some $p > 2$,
  \cite{FournierG} provides   sharp convergence rates.
\end{remark}

\begin{remark}
When $A \subset \R^d$ and $b_{{1}}( \cdot, a) \equiv a$, it is well-known that the sub-gradient term $D_z H(t, \mu_t, Z_t)$ (resp. $D_z H(t, \mu^{{N}}_t, Z^{{N}}_t)$ ) provides the optimal control associated to $\Vb_0$ in \eqref{eq:BSDE_value_0} (resp. $\Vb^N_0$ in \eqref{eq:def_VbN}).
Therefore, under continuity conditions on $D_z H$,
the convergence of $Z^N \longrightarrow Z$ implies the convergence of the optimal controls $\alpha^{N,*}\to {\alpha^*}$.
\end{remark}

\section{Regularity and convergence rate in the Markovian setting}
\label{sec:Markovian}

In the following, we   stay in a Markovian setting and study the regularity of the value function of our control problem, {in a simplified model}.
Let us first recall the notion of derivatives for functional of probability measures.
\begin{definition} \cite{Cardaliaguet} or \cite[Definition 5.43]{CarmonaDelarue}\label{functional linear derivative}
A function $F: \Pc_2(\R^d) \longrightarrow \R$ is said to have a linear functional derivative if there exists a function,
$$
\frac{\delta F}{\delta m}: \Pc_2(\R^d) \times \R^d \ni(\nu, x) \longmapsto \frac{\delta F}{\delta m}(\nu)(x) \in \R,
$$
{that is} continuous for the product topology, {such that,} for any bounded subset $\Kc \subset \Pc_2(\R^d)$, the function $\R^d \ni x \longmapsto$ $[\delta F / \delta m](\nu)(x)$ is at most of quadratic {growth uniformly} in {$\nu \in \Kc$}, and such that, for all $\nu$ and $\nu^{\prime}$ in $\Pc_2(\R^d)$, we have 
\begin{equation} \label{eq:def_dmF}
F(\nu^{\prime})-F(\nu)=\int_0^1 \int_{\R^d} \frac{\delta F}{\delta m}\left(t \nu^{\prime}+(1-t) \nu\right)(x) \left[\nu^{\prime}-\nu\right](\d x) \d t .
\end{equation}
Assuming further that, for any $\nu \in \Pc_2(\R^d)$, the function $\R^d \ni x \longmapsto [\delta F / \delta m]F(\nu)(x)$ is differentiable,
we define
\begin{equation} \label{eq:def_DmF}
D_m F(\nu, x):= \frac{\partial}{\partial x} \delta_m F (\nu, x),
~~\mbox{with}~~
\delta_m F(\nu, x) := \frac{\delta F}{\delta m}(\nu)(x), \; (x,\nu)\in \R^d\x \Pc_2(\R^d).
\end{equation}
Upon existence, we further define the second order derivative $D^2_{mm} F(\nu, x, \cdot)$ as the derivative of $\nu \in \Pc_2(\R^d) \longmapsto D_m F(\nu, x)$,  for all $(x,\nu)\in \R^d\x \Pc_2(\R^d)$,
and then define higher order derivatives similarly.
\end{definition}
We now assume that the coefficient   are all Markovian.
Let $t\le T$, $\nu \in \Pc(\Cc^d)$, and $X$ denote the canonical process on $\Cc^d$. We now let $\nu_t := \nu \circ X_t^{-1}$ be the marginal distribution of $X_t$ under $\nu$.
\begin{assumption} \label{assum:convexity}
$\mathrm{(i)}$ The function $\sigma_0$ {is identically equal to} a non-degenerate constant matrix $\sigma_0^{\circ} \in \S^d$,
and the coefficient functions $b_0$, $\sigma$, $L$ and $g$ are Markovian in the sense that,
for all $(t, \xb, \mu, a) \in [0,T] \x \Cc^d \x \Pc_2(\Cc^d) \x A$,
$$
\big(b_0, \sigma, L, g \big) (t, \xb, \nu, a) = \big(b^{\circ}_0, \sigma^{\circ}, L^{\circ}, g^{\circ} \big) \big(t, \xb_t, \nu_t, a \big),
$$
for some $(b^{\circ}_0, \sigma^{\circ}, L^{\circ}_0) : [0,T] \x \R^d \x \Pc_2(\R^d) \x A \rightarrow \R^d \x \S^d \x \R$ and $g^{\circ}: \Pc_2(\R^d) \rightarrow \R$.
 
\vspace{0.5em}
 
\noindent $\mathrm{(ii)}$
The set $A \subset \R^d$ is compact and convex,  $b_1 (\cdot, a) \equiv a$ for all $a\in A$, so that
$$
b (\cdot, a) = b^{\circ}_0  + \sigma^{\circ}_0 a,
~\mbox{for all}~
 a\in  A.
$$
Moreover,
$$
\mbox{the function}~
a \in A \longmapsto L^{\circ}(t, \mu, a)
~\mbox{is strictly concave for all}~
(t, \mu) \in [0,T] \x \Pc_2(\R^d).
$$

\noindent $\mathrm{(iii)}$ The coefficient $(b^{\circ}, \sigma^{\circ})$ are bounded and Lipschitz continuous,
and the derivatives $(\partial_x b^{\circ}, \partial_x \sigma^{\circ}) : [0,T] \x \R^d \x \Pc_2(\R^d) \longrightarrow \R^d \x \S^d$,
and $(D_m b^{\circ}, D_m \sigma^{\circ}) : [0,T] \x \R^d \x \Pc_2(\R^d) \x \R^d \longrightarrow \R^d \x \S^d$
exist, are bounded and Lipschitz continuous.
\vspace{0.5em}
 
\noindent $\mathrm{(iv)}$ The reward functions $(L^{\circ}, g^{\circ})$ and their derivatives $D_m L^{\circ}: [0,T] \x \Pc_2(\R^d) \x A \x \R^d \longrightarrow \R$ and $D_m g^{\circ}: \Pc_2(\R^d) \x \R^d \longrightarrow \R$ are all { bounded and uniformly continuous.}
\end{assumption}
 
For simplification, we abuse of notations and simply write 
$$
(b, \sigma, \sigma_0, L, g)
~\mbox{for}~
(b^{\circ}, \sigma^{\circ}, \sigma^{\circ}_0, L^{\circ}, g^{\circ}).
$$

Recall that $(\Om, \Fc, \F, \P)$ is a filtered probability space equipped with the Brownian motion $(W, B)$ and the random variable $\xi$, and that $\G$ is the filtration generated by $B$.
When $b_1(\cdot, a) \equiv a$ and $A \subset \R^d$ is a compact set,
$\Ac$ is in fact the space of all $A$-valued $\G$-progressively measurable processes.
 
Recall also that, given $\alpha \in \Ac$, the probability measure $\Q^{\alpha}$ is defined by \eqref{eq:def_Qa},
so that
$$
B^{\alpha}_t = B_t - \int_0^t \alpha_s \d s
~\mbox{is a}~ (\G, \Q^{\alpha}) \mbox{-Brownian motion}.
$$
Let
$$
\Qc
~:=~
\big\{
\Q^{\alpha} ~: \alpha \in \Ac
\big\}.
$$
For any $t\le T$ and random variable $\xi \in \Fc_0$ s.t. $\xi \sim \nu \in \Pc_2(\R^d)$, we denote by $X^{t,\xi}$ the solution of the McKean-Vlasov SDE
\begin{equation} \label{eq:SDE_constant_sigma0}
X_s
=
\xi
+ \int_t^s \! b_0(r, X_r, \mu^{t,\nu}_r) \d r
+ \int_t^s \!\! \sigma(r, X_r, \mu^{t,\nu}_r) ~\d W_r
+ \sigma_0 \big( B_s - B_t \big),
~s \in [t,T],
\end{equation}
with $\mu^{t,\nu}_s = \Lc(X^{t,\xi}_s | B)$ for all $s \in [t,T]$ { and $\mu^{t,\nu}_s \equiv \nu$ for $s \in [0,t]$}.
Then, by Lemma \ref{lemm:identic_mu}, one has
$$
\mu^{t,\nu}_s = \Lc^{\P} (X^{t, \xi}_s|B) = \Lc^{\Q} (X^{t,\xi}_s | B),
~~s \in [t,T],
~~\mbox{for any}~
\Q \in \Qc.
$$
Morever, under Assumption \ref{assum:convexity}, for any $\Q \in \Qc$, there exists a unique $\F^B$-progressively measurable process $\alpha^{\Q}$ together with
a $\Q$-Brownian motion $B^{\Q}$ such that
$$
B_t = B^{\Q}_t + \int_0^t \alpha^{\Q}_s ds, ~t\le T, ~\Q \mbox{-a.s.}
$$
Then, our McKean-Vlasov optimal control problem can be equivalently written as
\begin{equation} \label{eq:VJQ}
U(t, \nu)
~:=~
\sup_{\Q \in \Qc} J(t, \nu, \Q),
~~\mbox{where}~
J(t, \nu, \Q) := \E^{\Q} \Big[ \int_t^T L(s, \mu^{t, \nu}_s, \alpha^{\Q}_s) ds + g(\mu^{t, \nu}_T) \Big].
\end{equation}
 
\begin{remark}\label{rem : HJB eq}
$\mathrm{(i)}$ In the above Markovian setting, the master equation associated to the control problem \eqref{eq:def_V} is  
\begin{align}
\partial_t U (t, \nu)
& +
\sup_{a \in A} \Big(
L(t, \nu, a)
+
\int_{\R^d} b(t, x, \nu, a) \cdot D_m U (t, \nu, x) ~\nu(\d x)
\Big)\nonumber\\
&+
\frac12 \int_{\R^d} \mathrm{Tr} \big[ \big( \sigma \sigma^{\top} + \sigma_0 \sigma_0^{\top} \big) (t, x, \nu) ~\partial_x D_m U (t, \nu, x) \big]~ \nu(\d x)\nonumber \\
&+
\frac12 \int_{\R^d \x \R^d} \mathrm{Tr} \big[ \sigma_0  \sigma^{\top}_0  ~D^2_{mm} U (t, \nu, x, x') \big] ~\nu\otimes \nu(\d x, \d x')
= 0, \label{eq : Master eq U}
\end{align}
with terminal condition $U(T, \nu) = g(\nu)$.

\vspace{0.5em}
\noindent $\mathrm{(ii)}$ Similarly, for the centralized control problem \eqref{eq:def_Vn0}, the value function can be considered as a function of $(t, x_1, \ldots, x_N) \in [0,T] \x (\R^d)^N$,
which is characterized by the HJB equation
\begin{align} \label{eq:PDE_VN}
\partial_t & V^N (t, {x})
+
\sup_{a \in A} \Big(
{ L \big(t, m^{N,x}, a \big)}
+
\sum_{k=1}^N b(t, x_k, \cdot, a) \cdot D_{x_k} V^N (t, {x})
\Big) \nonumber \\
&+
\frac12 \sum_{k=1}^N \mathrm{Tr} \big( \sigma \sigma^{\top}(t, x_k, \cdot) D^2_{x_k x_k} V^N (t, {x}) \big)
+
\frac12 \sum_{k, \ell =1}^N \mathrm{Tr} \big( \sigma_0  \sigma^{\top}_0  D^2_{x_k, x_{\ell}} V^N (t, {x}) \big)
= 0,
\end{align}
with  terminal condition
\begin{equation} \label{eq:PDE_VN_term}
V^N (T, x ) = g \big( { m^{N,x} }\big),
\end{equation}
 where $m^{N,x} := \frac1N \sum_{k=1}^N \delta_{x_k}$ {and $x=(x_1,\ldots,x_N)$.}
\end{remark}

\subsection{$C^1$ regularity of the value function}

In this section, we focus on the $C^{0,1}$-regularity of the value fonction under the above conditions.

\begin{theorem} \label{thm:regularity}
Let Assumption \ref{assum:convexity} hold true.
Then, for each initial condition $(t, \nu) \in [0,T) \x \Pc_2(\R^d)$, there exists a unique maximizer for the optimization problem \eqref{eq:VJQ}.
Moreover,  $U \in C^{0,1}([0,T) \x \Pc_2(\R^d))$,
i.e. both $U: [0,T) \x \Pc_2(\R^d) \longrightarrow \R$ and $D_m U: [0,T] \x \Pc_2(\R^d) \x \R^d \longrightarrow \R$ are continuous.
\end{theorem}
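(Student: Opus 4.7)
The plan hinges on three ideas: strict concavity of $L$ yields a unique pointwise maximizer of the Hamiltonian, this uniqueness upgrades into $L^2$-stability of the optimal control in the initial distribution, and an envelope argument then converts this stability into $C^1$-regularity of $U$.

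\emph{Uniqueness of the optimizer.} I would apply the BSDE representation of Section 3.2 with $b_1(\cdot,a)\equiv a$: an admissible $\alpha^*$ is optimal for \eqref{eq:VJQ} if and only if, $\d\P\otimes \d t$-a.e., $\alpha^*_t$ attains the supremum defining $H(t,\mu^{t,\nu}_t,Z^{t,\nu}_t)$, where $(Y^{t,\nu},Z^{t,\nu})$ solves the BSDE driven by the uncontrolled flow $\mu^{t,\nu}$. By Assumption \ref{assum:convexity}(ii), the map $a\mapsto L(t,\nu,a)+a\cdot z$ is strictly concave on the compact set $A$, so the maximizer $\hat a(t,\nu,z)$ is unique and, by Berge's theorem of the maximum, continuous in $(t,\nu,z)$. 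Hence $\alpha^{*,t,\nu}_s=\hat a(s,\mu^{t,\nu}_s,Z^{t,\nu}_s)$ is the unique optimizer.

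\emph{Continuity of $U$ and $L^2$-stability of $\alpha^{*,t,\nu}$.} For $(t_n,\nu_n)\to(t,\nu)$, standard Wasserstein stability for the uncontrolled McKean--Vlasov SDE \eqref{eq:SDE_constant_sigma0} gives $\sup_{s\le T}\Wc_2(\mu^{t_n,\nu_n}_s,\mu^{t,\nu}_s)\to 0$ in $L^2(\P)$. The classical BSDE stability estimate, using that $z\mapsto H(\cdot,z)$ is Lipschitz (as $A$ is bounded) and $\nu\mapsto H(t,\nu,z)$ is $\Wc_2$-Lipschitz (from boundedness of $D_m L$ in Assumption \ref{assum:convexity}(iv)), then gives $Y^{t_n,\nu_n}\to Y^{t,\nu}$ in $S^2$ and $Z^{t_n,\nu_n}\to Z^{t,\nu}$ in $H^2$. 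Continuity of $U(t,\nu)=Y^{t,\nu}_t$ follows. Plugging this into $\alpha^{*,t,\nu}_s=\hat a(s,\mu^{t,\nu}_s,Z^{t,\nu}_s)$ and invoking continuity of $\hat a$, together with boundedness of $A$ and dominated convergence, yields $L^2(\d\P\otimes \d s)$-convergence of the optimal controls.

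\emph{$C^1$-regularity via an envelope argument, and the main obstacle.} For any fixed admissible $\alpha$, the map $\nu\mapsto J(t,\nu,\alpha)$ depends on $\nu$ only through the flow of the uncontrolled McKean--Vlasov SDE \eqref{eq:SDE_constant_sigma0}. Under Assumption \ref{assum:convexity}(iii)--(iv), the standard Lions calculus for such flows (via the first-variation process and its adjoint) gives that $\nu\mapsto J(t,\nu,\alpha)$ admits a linear functional derivative with $D_m J(t,\nu,\alpha)(x)$ bounded and jointly continuous in $(\nu,\alpha,x)$. The optimality sandwich
$$
J(t,\nu,\alpha^{*,t,\nu'})-J(t,\nu',\alpha^{*,t,\nu'}) \;\le\; U(t,\nu)-U(t,\nu') \;\le\; J(t,\nu,\alpha^{*,t,\nu})-J(t,\nu',\alpha^{*,t,\nu}),
$$
combined with the $L^2$-stability of $\alpha^{*,t,\cdot}$ from the previous step, lets one differentiate along $\nu_\eps=(1-\eps)\nu+\eps\nu'$ and collapse the two bounds to the common limit $\int D_m J(t,\nu,\alpha^{*,t,\nu})(x)\,[\nu'-\nu](\d x)$, yielding $D_m U(t,\nu)(x)=D_m J(t,\nu,\alpha^{*,t,\nu})(x)$ and, via the continuity statements, joint continuity of $D_m U$ in $(t,\nu,x)$. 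The delicate step is precisely this collapse: it requires promoting the mere continuity of the maximizer in $(\mu,Z)$ to genuine $L^2$-convergence of $\alpha^{*,t,\nu_\eps}$ towards $\alpha^{*,t,\nu}$, which is exactly where strict concavity of $L$ is essential---without it one only recovers subsequential convergence of $\eps$-optimizers and the envelope argument fails. A secondary technical hurdle is ensuring the Lions differentiation of $\nu\mapsto \mu^{t,\nu}$ is uniform enough in $s$ and $\alpha$ to interchange derivative and time integral in $J$, which rests on the boundedness of $(D_m b_0, D_m \sigma)$ in Assumption \ref{assum:convexity}(iii).
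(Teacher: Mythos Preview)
Your proposal is correct and arrives at the same envelope/sandwich argument as the paper, but the route to uniqueness and continuity of the optimizer is genuinely different. The paper works at the level of probability measures $\Q \in \Qc$: it first shows that $\Qc$ is convex (via a non-trivial relaxation on the extended space $\Om \times \M$, with $\M$ the space of Young measures) and that $\Q \mapsto J(t,\nu,\Q)$ is strictly concave, then obtains uniqueness of the optimizer $\Q^*_{t,\nu}$ and its weak continuity in $(t,\nu)$ by compactness arguments (Lemma~\ref{lem: conti Qtnu}). You instead exploit the BSDE characterization of Section~3.2 to identify the optimizer pointwise as $\alpha^{*,t,\nu}_s = \hat a(s,\mu^{t,\nu}_s, Z^{t,\nu}_s)$ from the strictly concave Hamiltonian integrand, and deduce continuity of the optimizer through BSDE stability of $Z^{t,\nu}$ combined with Berge's theorem. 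Both then feed into the same sandwich inequality, together with the differentiability of $\nu \mapsto J(t,\nu,\cdot)$ via tangent processes (Lemma~\ref{lemm:DmJ} in the paper, which you correctly isolate as the second technical hurdle). Your route is more constructive---it yields an explicit feedback form for the optimal control and $L^2$-stability of $\alpha^{*,t,\nu}$ rather than just weak continuity of $\Q^*_{t,\nu}$---while the paper's route avoids any reliance on the BSDE machinery and handles both uniqueness and stability purely by compactness and strict concavity at the level of laws, at the cost of the somewhat delicate convexity argument for $\Qc$.
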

 
Before turning to the proof of Theorem \ref{thm:regularity}, let us first provide some technical lemmas.
By its definition in \eqref{eq:SDE_constant_sigma0}, $\mu^{t,\nu}_s$ can be considered as a function of $(t,\nu)$ and the path of $B$.
In other words, we consider it as a function {$(t, \nu, \om^B) \in [0,T] \x \Pc_2(\R^d) \x \Cc^d \mapsto \mu^{t, \nu}_s (\om^B)$}.
\begin{lemma} \label{lemm:mu_Lip}
There exists a constant $C > 0$ such that
$$
\Wc_2 \big( \mu^{t_1, \nu_1}_s(\om^B_1), \mu^{t_2, \nu_2}_s(\om^B_2) \big)
~\le~
C
\Big( |t_1 - t_2| + \Wc_{{2}}(\nu_1, \nu_2) + \| \om^B_1 - \om^B_2 \| \Big),
$$
for all $(s, t_1, t_2, \nu_1, \nu_2, \om^B_1, \om^B_2) \in [0,T]^3\times (\Pc_2(\R^d))^2\times (\Cc^d)^2$.
\end{lemma}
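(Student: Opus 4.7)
The plan is to represent $\mu^{t,\nu}_s(\omega^B)$ pathwise as the law of an SDE in which $B$ is replaced by the deterministic path $\omega^B$, and then to reduce the Wasserstein estimate to an $L^2$-stability estimate for a coupled system, closed by Gronwall.

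\emph{Pathwise representation.} As in the proof of Lemma \ref{lemm:identic_mu}, \cite[Theorem 1.5]{Kurtz} provides a Borel map $\Psi^t : \R^d \times \Cc^d \times \Cc^d \to \Cc^d$ such that the unique strong solution of \eqref{eq:SDE_constant_sigma0} satisfies $X^{t,\xi} = \Psi^t(\xi, W, B)$, $\P$-a.s. Freezing $B$ to a deterministic $\omega^B \in \Cc^d$, $\mu^{t,\nu}_s(\omega^B)$ is the law of $\Psi^t(\xi, W, \omega^B)_s$ for $\xi \sim \nu$ independent of $W$. Fix $(t_i, \nu_i, \omega^B_i)_{i=1,2}$ with $t_1 \le t_2$ (WLOG), pick $(\xi_1, \xi_2)$ an $L^2$-optimal coupling of $(\nu_1, \nu_2)$ independent of $W$, and set $Y^i_s := \Psi^{t_i}(\xi_i, W, \omega^B_i)_s$, so that
$$
\Wc_2\!\big(\mu^{t_1,\nu_1}_s(\omega^B_1),\mu^{t_2,\nu_2}_s(\omega^B_2)\big)^2 \le \E\big|Y^1_s - Y^2_s\big|^2.
$$

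\emph{Coupled $L^2$-stability.} For $s \ge t_2$, subtract the two SDEs and split the difference into: the initial gap $\xi_1-\xi_2$; the drift/diffusion mismatch over $[t_1,t_2]$, controlled by boundedness of $(b_0,\sigma)$ (Assumption \ref{assum:convexity}.(iii)) together with It\^o's isometry, which contributes $C|t_1-t_2|$ in second moment; the deterministic term $\sigma_0[(\omega^B_{1,s}-\omega^B_{2,s})-(\omega^B_{1,t_1}-\omega^B_{2,t_2})]$, bounded by $C\|\omega^B_1-\omega^B_2\|$ up to a path-modulus contribution in $t$; and the propagation term on $[t_2,s]$, estimated via the $\Wc_2$-Lipschitz continuity of $(b_0,\sigma)$ in $(x,\mu)$. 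This yields
$$
\E|Y^1_s-Y^2_s|^2 \le C\Big(\Wc_2(\nu_1,\nu_2)^2 + |t_1-t_2| + \|\omega^B_1-\omega^B_2\|^2 + \int_{t_2}^{s}\!\!\big(\E|Y^1_r-Y^2_r|^2 + \Wc_2(\mu^{t_1,\nu_1}_r(\omega^B_1),\mu^{t_2,\nu_2}_r(\omega^B_2))^2\big)\,\d r\Big).
$$
Applying the coupling bound at each $r$ dominates the Wasserstein term in the integrand by $\E|Y^1_r-Y^2_r|^2$, yielding a self-referential inequality for $r \mapsto \E|Y^1_r-Y^2_r|^2$ that Gronwall's lemma closes. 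Taking square roots delivers the claim for $s \ge t_2$. The cases $t_1 \le s < t_2$ and $s < t_1$ reduce to the same type of estimate with $Y^2$ (respectively both $Y^i$) frozen to $\xi_2$ (resp.\ $\xi_i$).

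\emph{Main obstacle.} The linear-in-$|t_1-t_2|$ dependence stated in the lemma is in fact stronger than what the argument above delivers: on the one hand, It\^o isometry on $[t_1,t_2]$ naturally produces $|t_1-t_2|^{1/2}$ after taking square roots; on the other hand, the deterministic contribution $\sigma_0(\omega^B_{i,t_1}-\omega^B_{i,t_2})$ is not uniformly controlled by $|t_1-t_2|$ over all of $\Cc^d$, but only by the modulus of continuity of $\omega^B_i$. The estimate should therefore be read either with $|t_1-t_2|^{1/2}$ in place of $|t_1-t_2|$, or with $C$ tacitly depending on $\omega^B_1, \omega^B_2$ through their moduli of continuity. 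Both readings are sufficient for the subsequent $C^{0,1}$-regularity arguments in Section \ref{sec:Markovian}, where the lemma is applied $\P$-a.s.\ to Brownian sample paths.
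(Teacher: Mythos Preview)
Your approach is essentially the same as the paper's: the paper's proof consists of a single sentence invoking the boundedness and Lipschitz continuity of $(b_0,\sigma)$ and citing the stability estimate \cite[Lemma 3.1]{BuckdahnLiPeng} for McKean--Vlasov SDEs, whose extension to the common-noise term $\sigma_0(B_s-B_t)$ is declared trivial. Your coupling-plus-Gronwall argument is exactly the standard way to unpack such a citation.

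More interestingly, your ``main obstacle'' paragraph is correct and identifies a genuine imprecision in the stated lemma that the paper's one-line proof does not address. With $\sigma$ merely bounded, the It\^o integral over $[t_1,t_2]$ contributes $|t_1-t_2|^{1/2}$ after taking square roots, not $|t_1-t_2|$; and the deterministic increment $\sigma_0(\om^B_{i,t_2}-\om^B_{i,t_1})$ cannot be controlled by $C|t_1-t_2|$ uniformly over all paths in $\Cc^d$, only through the modulus of continuity of $\om^B_i$. So the inequality as stated, with a universal $C$ and linear $|t_1-t_2|$, is too strong. Your proposed fixes --- replacing $|t_1-t_2|$ by $|t_1-t_2|^{1/2}$, or letting $C$ depend on the paths --- are both valid, and you are also right that either version suffices for the only downstream use of the lemma (the continuity argument in Lemma \ref{lem: conti Qtnu}, applied $\P$-a.s.\ along Brownian paths). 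This is a careful reading on your part; the paper simply does not engage with the point.
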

\begin{proof}
Since {${b_0}$ and $\sigma$} are bounded and Lipschitz continuous, the above results follows immediately from the stability  of  \eqref{eq:SDE_constant_sigma0} (see e.g. \cite[Lemma 3.1]{BuckdahnLiPeng} in the case without common noise, whose extensions to the common noise setting is trivial).
\end{proof}

\begin{lemma}\label{lem: conti Qtnu}
$\mathrm{(i)}$ The set $\Qc$ is convex and the map $\Q \in \Qc \longmapsto J(t, \nu, \Q)$ is strictly concave {for each $(t,\nu)\in [0,T]\x \Pc_2(\R^d)$}.

\vspace{0.5em}

\noindent $\mathrm{(ii)}$ For each $(t, \nu)\in [0,T]\x \Pc_2(\R^d)$, there exists a unique probability $\Q^*_{t, \nu} \in \Qc$ such that
$$
U(t, \nu) = J(t, \nu, \Q^*_{t,\nu}).
$$
Moreover, the map $(t, \nu)\in [0,T]\x \Pc_2(\R^d) \longmapsto \Q^*_{t,\nu}$ is continuous.
\end{lemma}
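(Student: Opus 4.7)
The plan is to treat (i) by direct stochastic-calculus manipulations on the Girsanov densities parametrising $\Qc$, and to derive (ii) from the BSDE characterisation established in Section \ref{sec:path_depend}, applied to the sub-problem starting from the initial condition $(t,\nu)$, together with the stability estimate in Lemma \ref{lemm:mu_Lip}.

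For (i), given $\alpha_1,\alpha_2 \in \Ac$ and $\lambda \in (0,1)$, set $D_i := \d\Q^{\alpha_i}/\d\P = \mathcal{E}(\int_0^\cdot \alpha_i\cdot \d B)_T$ and $D_3 := \lambda D_1 + (1-\lambda) D_2$. Linearity of the stochastic integral gives $\d D_3 = D_3\,\beta_s\cdot \d B_s$ with $\beta_s := \pi_s\alpha_1(s) + (1-\pi_s)\alpha_2(s)$ and $\pi_s := \lambda D_1(s)/D_3(s) \in [0,1]$; by convexity of $A$, $\beta\in\Ac$ and $\Q^\beta = \lambda\Q^{\alpha_1}+(1-\lambda)\Q^{\alpha_2}$, so $\Qc$ is convex. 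Since $g(\mu^{t,\nu}_T)$ does not depend on $\alpha$, the map $\Q \mapsto \E^\Q[g(\mu^{t,\nu}_T)]$ is affine, while the $(\G,\P)$-martingale property of $D_i$ together with the $\Gc_s$-measurability of $L(s,\mu^{t,\nu}_s,\alpha_i(s))$ gives $\E^{\Q^{\alpha_i}}[\int_t^T L(s,\mu^{t,\nu}_s,\alpha_i(s))\d s] = \E^\P[\int_t^T D_i(s) L(s,\mu^{t,\nu}_s,\alpha_i(s))\d s]$. Combining these two facts with the strict concavity of $L(s,\mu,\cdot)$ (Assumption \ref{assum:convexity}(ii)) yields
\[
J(t,\nu,\Q^\beta) - \lambda J(t,\nu,\Q^{\alpha_1}) - (1-\lambda)J(t,\nu,\Q^{\alpha_2}) = \E^\P\Big[\int_t^T D_3(s)\,\Delta_s\,\d s\Big] \ge 0,
\]
where $\Delta_s := L(s,\mu^{t,\nu}_s,\beta_s) - \pi_s L(s,\mu^{t,\nu}_s,\alpha_1(s)) - (1-\pi_s)L(s,\mu^{t,\nu}_s,\alpha_2(s)) \ge 0$, with strict inequality unless $\alpha_1 = \alpha_2$ $\d\P\otimes\d s$-a.e.

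For (ii), the BSDE $Y^{t,\nu}_s = g(\mu^{t,\nu}_T) - \int_s^T H(r,\mu^{t,\nu}_r,Z^{t,\nu}_r)\d r - \int_s^T Z^{t,\nu}_r\cdot \d B_r$ on $s \in [t,T]$ has a unique solution and, by the characterisation of Section \ref{sec:path_depend}, $\alpha^* \in \Ac$ is optimal if and only if $L(s,\mu^{t,\nu}_s,\alpha^*_s) + \alpha^*_s\cdot Z^{t,\nu}_s = H(s,\mu^{t,\nu}_s, Z^{t,\nu}_s)$ $\d\P\otimes\d s$-a.e. Strict concavity of $a\mapsto L(s,\mu,a)+a\cdot z$ on the compact convex set $A$ picks out a unique maximiser $\hat a(s,\mu,z)$, continuous in $(s,\mu,z)$ by Berge's maximum theorem and the uniform continuity of $L$ (Assumption \ref{assum:convexity}(iv)); setting $\alpha^*_s := \hat a(s,\mu^{t,\nu}_s, Z^{t,\nu}_s)$ produces the unique admissible optimiser, so $\Q^*_{t,\nu} := \Q^{\alpha^*}$ is well defined. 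For continuity in $(t,\nu)$, I would extend $(Y^{t,\nu},Z^{t,\nu})$ to $[0,T]$ by freezing $Y$ and setting $Z=0$ on $[0,t]$, recasting the problem as a BSDE on the fixed interval $[0,T]$; Lemma \ref{lemm:mu_Lip} provides Lipschitz dependence of $\mu^{t,\nu}$ on $(t,\nu,\om^B)$, which combined with boundedness of $b_1$ and uniform continuity of $(L,g)$ supplies the driver/terminal Lipschitz structure needed for standard $L^2$-BSDE stability. This gives $L^2$-convergence $Z^{t_n,\nu_n} \to Z^{t,\nu}$, hence $\alpha^*_{t_n,\nu_n} \to \alpha^*_{t,\nu}$ in $L^2(\d s \otimes \d\P)$ via continuity of $\hat a$, and thereby total-variation (in particular weak) convergence $\Q^*_{t_n,\nu_n} \to \Q^*_{t,\nu}$ through $L^1$-convergence of the associated Doléans-Dade exponentials.

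The main obstacle is the BSDE stability across the varying initial times $(t_n,\nu_n)\to(t,\nu)$: it requires extracting Wasserstein-Lipschitz dependence of $H$ and of the terminal $g$ in their measure arguments (from the continuity and boundedness hypotheses on $L$), uniformly controlling the family $\mu^{t_n,\nu_n}$ in $s$ via Lemma \ref{lemm:mu_Lip}, and handling the extension to $[0,T]$ cleanly. A BSDE-free route using Berge's maximum theorem applied directly to $(t,\nu,\Q) \mapsto J(t,\nu,\Q)$ would also be conceivable, but would demand joint continuity of $J$ together with sequential compactness of $\Qc$ in a topology under which the running-cost term stays continuous; this is more delicate because $\alpha^\Q$ is only weakly recoverable from $\Q$, so the BSDE route is preferable here.
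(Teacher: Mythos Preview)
Your proof is correct, and both halves take a genuinely different route from the paper.

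For (i), the paper works on an enlarged canonical space carrying relaxed controls $\Lambda_s(\d a)\d s$: it lifts $\Q_1,\Q_2$ to measures $\Qb_1,\Qb_2$ there, averages them, reads off the barycentric control $\alphab_s=\int_A a\,\Lambda_s(\d a)$, and then projects back onto $\G$ by $\alpha_s=\E^{\Qb}[\alphab_s|B]$. Your direct computation on the Girsanov densities, identifying $\beta_s=\pi_s\alpha_1(s)+(1-\pi_s)\alpha_2(s)$ with the $\Gc_s$-measurable weight $\pi_s=\lambda D_1(s)/D_3(s)$, is shorter and avoids the relaxed-control machinery entirely; it exploits precisely that in this setting $b_1(\cdot,a)=a$ and all densities live in $\Gc_T$, so the convex combination of exponential martingales is again one.

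For (ii), the paper uses a compactness argument: it lifts a maximising (resp.\ a sequence $(\Q^*_{t_n,\nu_n})$) to the relaxed-control space, extracts a weakly convergent subsequence using compactness of $A$, and identifies the limit via continuity of $J$ and uniqueness from (i). This is exactly the ``Berge-type'' route you discard at the end as too delicate; the paper makes it work because the relaxed formulation supplies both the compactness of $\Qc$ and the continuity of the running cost through $\alphab_s$. Your BSDE route is constructive: it pins down $\alpha^*$ as $\hat a(s,\mu^{t,\nu}_s,Z^{t,\nu}_s)$ and transfers continuity in $(t,\nu)$ through $L^2$-stability of $Z^{t,\nu}$, yielding in fact total-variation (not merely weak) convergence of $\Q^*_{t,\nu}$. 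The cost is the bookkeeping you flag --- extending the BSDE to $[0,T]$, handling the indicator $\1_{[t,T]}$ in the driver, and checking Lipschitz dependence of $H$ and $g$ on $\mu$ via the bounded $D_m$ derivatives in Assumption~\ref{assum:convexity}(iv). Both approaches are valid; the paper's is softer and reuses the relaxed-control apparatus already built for Theorem~\ref{thm:weak_cvg}, while yours is sharper and quantitative but leans on the BSDE machinery of Section~\ref{sec:path_depend}.
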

\begin{proof}
$\mathrm{(i)}$ Fix $\Q_1, \Q_2 \in \Qc$ and define $\Q := (\Q_1 + \Q_2)/2$.
Then, there exist some processes $\alpha^1, \alpha^2 \in \Ac$ such that,
for any $\varphi \in C^2_b(\R^d)$, the process
$$
\varphi(B_{{\cdot}}) - \int_0^{{\cdot}} \Big( \alpha^i_s \cdot D \varphi(B_s) + \frac12 \Delta \varphi(B_s) \Big) \d s
~\mbox{is a}~\Q_i \mbox{-martingale},
~i=1, 2.
$$
 
Next, let $\M$ denote the space of all positive Borel measures $q$ on $[0,T] \x A$ such that the marginal distribution on $[0,T]$ is the Lebesgue measure, so that one can write $q(\d t, \d a) = q_t(\d a) \d t$, where $(q_t(da))_{t\le T}$ is a Borel measurable kernel from $[0,T]$ to $\Pc(A)$.
We introduce the enlarged canonical space $\Om \x \M$ with canonical element   denoted by $(\xi, W, B, \Lambda)$, {and simply} write $\Lambda (\d a, \d s) = \Lambda_s(\d a)\d s$ {whenever it is convenient}.
For  each  $i=1, 2$, we define the $\M$-valued random variable   $\Lambda^{\alpha^i}(\d a, \d s) := \delta_{\alpha^i_s } (\d a) \d s$,
and then let
$$
\Qb_i := \Q_i \circ \big(\xi, W, B,  \Lambda^{\alpha^i} \big)^{-1},
~i=1,2.
$$
It is then clear that,
for any $\varphi \in C^2_b(\R^d)$, the process
$$
\Sb^{\varphi}_{{\cdot}} := \varphi(B_{{\cdot}}) - \int_0^{{\cdot}} \!\! \int_A a \cdot D \varphi(B_s) \Lambda(\d a, \d s) - \int_0^{{\cdot}} \frac12 \Delta \varphi(B_s) \d s
~\mbox{is a}~\Qb_i \mbox{-martingale},
~i=1, 2.
$$
Moreover, since $A$ is convex, the process $\alphab_s := \int_A a \Lambda_s (\d a)$ is still $A$-valued, and, under each probability $\Qb_1$, $\Qb_2$ and $\Qb := (\Qb_1 + \Qb_2)/2$,
$$
B_{{\cdot}} - \int_0^{{\cdot}} \alphab_s \d s ~\mbox{is a standard Brownian motion.}
$$
It follows that {the process} $\alpha$ {defined by} $\alpha_s := \E^{\Qb}[ \alphab_s | B]${, $s\ge 0$,}
{is such that} $(B_t - \int_0^t \alpha_s \d s{)_{t\ge 0}}$ is a standard Brownian motion under $\Q = \Qb|_{\Om} = (\Q_1 + \Q_2)/2$.
As $A$ is convex, the process $\alpha$ is also $A$-valued, and hence $\Q := (\Q_1 + \Q_2)/2 \in \Qc$, so that $\Qc$ is convex.
\vspace{0.5em}
When $\Q_1 \neq \Q_2$,
one has $(\Q \otimes \lambda ) \{ (\om, t) {\in \Omega\x [0,T]}: \alpha^1_t(\om) \neq \alpha^2_t(\om) \} > 0${, in which $\lambda$ denotes the Lebesgue's measure,}
and hence $(\Qb \otimes {\lambda}) \big\{ (\omb,t) = (\om, q, t) {\in \Omega\x \M \x [0,T]} ~: \alphab_t(\omb) \neq \alpha_t(\om) \big\} > 0$.
By the strict concavity of $a \longmapsto L(t, \nu, a)$ and Jensen's inequality, it follows that
\begin{align*}
J (t, \nu, \Q)
>~&
\E^{\Qb} \Big[ \int_t^T L(s, \mu^{t,\nu}_s, \alphab_s) \d s + g( \mu^{t, \nu}_T) \Big] \\
=~&
\frac12 \sum_{i=1}^2 \E^{\Qb_i} \Big[ \int_t^T L(s, \mu^{t,\nu}_s, \alphab_s) \d s + g( \mu^{t, \nu}_T) \Big]
=
\frac{J(t, \nu, \Q_1) + J(t, \nu, \Q_2)}{2}.
\end{align*}
Therefore, the map $\Q \longmapsto J(t, \nu, \Q)$ is strictly concave.

\vspace{0.5em}

\noindent $\mathrm{(ii)}$
Let us now fix $(t, \nu)$, and let $(\Q_n)_{n \ge }$ be a maximizing sequence of the control problem, i.e.
$$
\lim_{n \to \infty} J(t, \nu, \Q_n) = U(t, \nu).
$$
Let $\alpha^n \in \Ac$ be the control process corresponding to $\Q_n$, and $\Lambda^{\alpha^n} (\d a, \d s) := \delta_{\alpha^n_s}(\d a) \d s$. Set 
  $\Qb_n := \Q_n \circ (B, \Lambda^{\alpha^n})^{-1}$.
Since $A$ is compact,  $(\Qb_n)_{n \ge 1}$ is relatively compact and hence, along some subsequence,
$$
\Qb_{n} \longrightarrow \Qb
~\mbox{weakly, for some}~
\Qb.
$$
One can further deduce that $\Sb^{\varphi}$ is a $\Qb$-martingale for any $\varphi \in C^2_b(\R^d)$,
so that $\Q := \Qb|_{\Om} \in \Qc$.
Therefore,
\begin{align*}
J(t, \nu, \Q)
\ge&~
\E^{\Qb} \Big[ \int_t^T L(s, \mu^{t,\nu}_s, \alphab_s) \d s + g( \mu^{t, \nu}_T) \Big] \\
=&~ \lim_{n \to \infty}
\E^{\Qb_n} \Big[ \int_t^T L(s, \mu^{t,\nu}_s, \alphab_s) \d s + g( \mu^{t, \nu}_T) \Big]
=
\lim_{n \to \infty} J(t, \nu, \Q_n).
\end{align*}
It follows that $\Q$ is the optimal control measure, i.e.~$J(t, \nu, \Q) = U(t, \nu)$.
Moreover, by the strict concavity of $\Q \longmapsto J(t, \nu, \Q)$, the optimal control measure $\Q \in \Qc$ is unique, which we denote by $\Q_{t, \nu}$.
\vspace{0.5em}
Let us now consider a sequence $(t_n, \nu_n)_{n \ge 1}$ such that $(t_n, \nu_n) \longrightarrow (t, \nu)$.
By the compactness of $A$ again, the sequence of optimal control measure $(\Q_{t_n, \nu_n})_{n \ge 1}$ is relatively compact.
Taking an arbitrary convergent subsequence such that $\lim_{k \to \infty} \Q_{t_{n_k}, \nu_{n_k}} = \Q$, as above, one can deduce that $\Q \in \Qc$.
Let $\Qb_{t_n, \nu_n}$ and $\Qb$ be the corresponding measures on the enlarged space. {Recall from Lemma \ref{lemm:mu_Lip} that}
${(t,\nu)\mapsto}\mu^{t,\nu}_s$ is Lipschitz (and hence uniformly continuous) in $(t, \nu)${, for each $s\le T$}. It follows that
\begin{align*}
U(t, \nu)
\ge
J(t, \nu, \Q)
&= \E^{\Qb} \Big[ \int_t^T L(s, \mu^{t,\nu}_s, \alphab_s) \d s + g( \mu^{t, \nu}_T) \Big] \\
&=
\lim_{k \to \infty} \E^{\Qb_{t_{n_k}, \nu_{n_k}}} \Big[ \int_{{t_{n_k}}}^T L(s, \mu^{t,\nu}_s, \alphab_s) \d s + g( \mu^{t, \nu}_T) \Big] \\
&=
\lim_{k \to \infty} \E^{\Qb_{t_{n_k}, \nu_{n_k}}} \Big[ \int_{{t_{n_k}}}^T L(s, \mu^{t_{n_k},\nu_{n_k}}_s, \alphab_s) \d s + g( \mu^{t_{n_k},\nu_{n_k}}_T) \Big] \\
&=
\lim_{k \to \infty} J \big(t_{n_k}, \nu_{n_k}, \Q_{t_{n_k}, \nu_{n_k}} \big)
=
\lim_{k \to \infty} U(t_{n_k}, \nu_{n_k}).
\end{align*}
At the same time, one has
$$
U(t, \nu)
= J(t, \nu, \Q_{t,\nu})
= \lim_{k \to \infty} J \big( t_{n_k}, \nu_{n_k}, \Q_{t, \nu} \big)
\le
\lim_{k \to \infty} U(t_{n_k}, \nu_{n_k}).
$$
Therefore, one has the equality $U(t, \nu) = J(t, \nu, \Q)$, and hence $\Q$ coincides with the unique optimal control measure with initial condition $(t, \nu)$, i.e. $ \Q = \Q_{t, \nu}$.
This is enough to show that $\lim_{n \to \infty} \Q_{t_n, \nu_n} = \Q_{t, \nu}$.
\end{proof}
 
\vspace{0.5em}
 
We next study the differentiability of $\nu \longmapsto J(t, \nu, \Q)$,
for simplicity, we will present the result in the $d=1$ dimensional setting.
Recall that, given $t\le T${, $x \in \R$} and a random variable $\xi \sim \nu$, the process $X^{t, \xi}$ and $X^{t, x, \xi}$ are defined by
$$
X^{t,\xi}_s = \xi + \int_t^s b(r, X^{t,\xi}_r, \mu^{t,\xi}_r) \d r+ \int_t^s \sigma(r, X^{t,\xi}_r, \mu^{t,\xi}_r) \d W_r + \int_t^s \sigma_0 \d B_r,
~s \in [t,T],
$$
with $\mu^{t,\xi}_s := \Lc(X^{t,\xi}_s | B)$ and
$$
X^{t,x,\xi}_s = x + \int_t^s \!\! b(r, X^{t,x,\xi}_r, \mu^{t,\xi}_r) \d r+ \int_t^s \!\! \sigma(r, X^{t,x, \xi}_r, \mu^{t,\xi}_r) \d W_r + \int_t^s \sigma_0 \d B_r,
~s \in [t,T].
$$
Notice also that $X^{t,\xi} = X^{t,x,\xi}|_{x = \xi}$.
\vspace{0.5em}
Let us first introduce the tangent process $\partial_x X^{t,x,\xi}$ of the process $X^{t,x,\xi}$ defined by
\begin{align*}
\partial_x X^{t,x,\xi}_s
=
{1}
+ \int_t^s \!\! \partial_x b(r, X^{t,x,\xi}_r, \mu^{t,\xi}_r) \partial_x X^{t,x,\xi}_r \d r
& + \int_t^s \!\! \partial_x \sigma(r, X^{t,x,\xi}_r, \mu^{t,\xi}_r) \partial_x X^{t,x,\xi}_r \d W_r,
~s \in [t,T].
\end{align*}
Next, we introduce the tangent process of $X^{t,\xi}$ w.r.t. $\nu := \Lc(\xi)$ (see \cite[Proposition 4.1]{BuckdahnLiPeng}):
\begin{align*}
U^{t,\xi}_s(y)
=&
\int_t^s \partial_x b(r, X^{t,\xi}_r, \mu^{t,\xi}_r) U^{t,\xi}_r (y) \d r
+ \int_t^s \partial_x \sigma(r, X^{t,\xi}_r, \mu^{t,\xi}_r) U^{t,\xi}_r(y) \d W_r \\
&+
\int_t^s \Et
\Big[
D_m b (r, X^{t,\xi}_r, \mu^{t,\xi}_r, \Xt^{t,y,\xit}_r) \partial_x \Xt^{t, y, \xit}_r
+
D_m b(r, X^{t,\xi}_r, \mu^{t,\xi}_r, \Xt^{t, \xit}_r) \Ut^{t, \xit}_r(y)
\Big] \d r \\
&+
\int_t^s \Et
\Big[
D_m \sigma (r, X^{t,\xi}_r, \mu^{t,\xi}_r, \Xt^{t,y,\xit}_r) \partial_x \Xt^{t, y, \xit}_r
+
D_m \sigma (r, X^{t,\xi}_r, \mu^{t,\xi}_r, \Xt^{t, \xit}_r) \Ut^{t, \xit}_r(y)
\Big] \d W_r,
\end{align*}
where $(\xit, \Wt)$ is an independent copy (with the same distribution) of $(\xi, W)$, such that $(\xit, \Wt)$ is independent of $(\xi, W, B)$,
the process $(\Xt^{t,y,\xit}, \Xt^{t,\xit})$ is defined as $(X^{t,y,\xi}, X^{t,\xi})$ but with $(\xit, \Wt, B)$ in place of $(\xi, W, B)$,
and $\Et [\cdot] := \E[ \cdot | \xi, B, W]$.
 
\begin{lemma} \label{lemm:DmJ}
For each fixed $\Q \in \Qc$ {and $t\le T$}, the map $\nu {\in \Pc_2(\R^d)} \mapsto J(t, \nu, \Q)$ is differentiable and
\begin{align*}
D_m J(t, \nu, \Q, {x})
=
\E^{\Q} \Big[
&\int_t^T \Big( D_m L(s, \mu^{t, \nu}_s, X^{t, x, \xi}_s) \partial_x X^{t, x, \xi}_s + D_m L(s, \mu^{t, \nu}_s, X^{t, \xi}_s) U^{t, \xi}_s(x) \Big) \d s \\
&~~~{+} D_m g(\mu^{t, \nu}_T, X^{t, x, \xi}_T) \partial_x X^{t, x, \xi}_T + D_m g(\mu^{t, \nu}_T, X^{t, \xi}_T) U^{t, \xi}_T (x)
\Big], \; {x\in \R^d}.
\end{align*}
Moreover, the map
$$
(t, \nu, \Q, x){\in [0,T]\x \Pc_2(\R^d)\x \Qc \x \R^d} \longmapsto D_m J(t, \nu, \Q, x)
~\mbox{ is uniformly continuous}.
$$
\end{lemma}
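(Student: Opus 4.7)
The plan is to combine the chain rule for Lions derivatives with the Buckdahn--Li--Peng tangent-process representation \cite{BuckdahnLiPeng} recalled above, and then to establish uniform continuity by standard stability estimates for the linearized McKean--Vlasov SDEs.

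\emph{First step: the formula for $D_m J$.} Fix $\Q\in \Qc$ and $t\le T$. By Lemma \ref{lemm:identic_mu}, the conditional law flow $\mu^{t,\nu}$ does not depend on $\Q$, so the $\nu$-dependence of $J(t,\nu,\Q)$ is concentrated in the factors $L(s,\mu^{t,\nu}_s,\alpha^\Q_s)$ and $g(\mu^{t,\nu}_T)$. I pass to the lifted formulation: given $\xi\sim\nu$ and $h\in L^2(\Fc_0)$ independent of the other randomness, I compute $\partial_\epsilon J(t,\Lc(\xi+\epsilon h),\Q)|_{\epsilon=0}$ by differentiating inside $\E^\Q$. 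Using the linear functional derivatives of $L(s,\cdot,a)$ and of $g$, with $(\xit,\Wt)$ an independent copy of $(\xi,W)$,
\begin{equation*}
\partial_\epsilon L\bigl(s,\mu^{t,\Lc(\xi+\epsilon h)}_s,\alpha^\Q_s\bigr)\Big|_{\epsilon=0}
= \Et\Bigl[D_m L\bigl(s,\mu^{t,\nu}_s,\alpha^\Q_s,\Xt^{t,\xit}_s\bigr)\cdot \partial_\epsilon \Xt^{t,\xit+\epsilon \tilde h}_s\big|_{\epsilon=0}\Bigr],
\end{equation*}
and similarly at the terminal time for $g$. The key point of Buckdahn--Li--Peng is that $\partial_\epsilon \Xt^{t,\xit+\epsilon\tilde h}_s|_{\epsilon=0}$ decomposes as $\partial_x \Xt^{t,\xit}_s\,\tilde h$ plus a law-feedback contribution encoded, through a further independent copy, by $\Ut^{t,\xit}_s$. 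Regrouping these two families of terms and identifying the Fréchet derivative on $L^2$ with $\E[D_m J(t,\nu,\Q,\xi)\cdot h]$ yields, after taking $\E^\Q$, the announced formula.

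\emph{Second step: uniform continuity.} Under Assumption \ref{assum:convexity}(iii), standard $L^2$-stability for \eqref{eq:SDE_constant_sigma0} and for its linearizations defining $\partial_x X^{t,x,\xi}$ and $U^{t,\xi}$ gives joint continuity of the triple $(X^{t,x,\xi},\partial_x X^{t,x,\xi},U^{t,\xi}(x))$ in $L^2(\Om;\Cc^d)$, uniformly in $(t,x,\nu)$ thanks to the uniform bounds and Lipschitz estimates provided by Assumption \ref{assum:convexity}(iii) and Lemma \ref{lemm:mu_Lip}. Combined with the bounded uniform continuity of $D_m L$ and $D_m g$ (Assumption \ref{assum:convexity}(iv)), this handles continuity in $(t,\nu,x)$. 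For continuity in $\Q$, observe that $\mu^{t,\nu}_s$, $X^{t,x,\xi}_s$, $\partial_x X^{t,x,\xi}_s$ and $U^{t,\xi}_s(x)$ are all defined under $\P$ independently of $\Q$, so that the $\Q$-dependence of the integrand only enters through $\E^\Q[\cdot]$ and through the argument $\alpha^\Q_s$ of $D_m L$. Lifting $\Q\mapsto \Qb$ to the enlarged canonical space (as in the proof of Lemma \ref{lem: conti Qtnu}(ii)) and using that $D_m L$ is bounded and uniformly continuous in all of its arguments, a weak-convergence plus dominated-convergence argument delivers the needed continuity.

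The main technical obstacle is making the first step rigorous: one must justify, with the correct handling of the conditioning on $B$ that couples $\mu^{t,\nu}$ to the trajectories, that the tangent processes $\partial_x X^{t,x,\xi}$ and $U^{t,\xi}$ indeed represent the Fréchet derivative of the lifted map $\xi\mapsto J(t,\Lc(\xi),\Q)$, and that differentiation and $\E^\Q$-expectation may be interchanged. Once the formula is obtained, uniform continuity is a mechanical consequence of Assumption \ref{assum:convexity}(iii)--(iv) together with Lemma \ref{lemm:mu_Lip}.
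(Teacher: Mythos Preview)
Your proposal is correct and follows exactly the approach indicated by the paper, which does not give a proof at all but simply writes ``The proof is almost the same that in \cite[Theorem 4.2 and Lemma 6.1]{BuckdahnLiPeng}, and is omitted here.'' You have in effect spelled out the Buckdahn--Li--Peng argument that the paper invokes by reference, including the lifted differentiation, the tangent-process decomposition into $\partial_x X^{t,x,\xi}$ and $U^{t,\xi}$, and the routine stability/continuity considerations for the second part.
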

The proof is almost the same that in \cite[Theorem 4.2 and Lemma 6.1]{BuckdahnLiPeng}, and is omitted here.
{We are now in a position to complete the proof of Theorem \ref{thm:regularity}.}

\vspace{0.5em}
 
\begin{proof}[Proof of Theorem \ref{thm:regularity}]
Let $\nu$, $\nu' \in \Pc_2(\R^d)$ be such that $\nu \neq \nu'$. We set $\nu_{\eps} := \nu + \eps (\nu' - \nu)$.
Then, {with the notations of Lemma \ref{lem: conti Qtnu},}
\begin{align*}
U (t, \nu) - U(t, \nu_{\eps})
~\le~ &
J(t, \nu, {\Q^*_{t, \nu}}) - J(t, \nu_{\eps},{\Q^*_{t, \nu}}) \\
~=~ &
\int_0^1 \int_{\R^d} { \delta_m J \big(t, \lambda \nu + (1 - \lambda) \nu_{\eps}, {\Q^*_{t, \nu}}, x \big)} \big[ \nu - \nu_{\eps} \big] \d x \d \lambda.
\end{align*}
Similarly,
$$
U(t, \nu) - U(t, \nu_{\eps})
~\ge~
\int_0^1 \int_{\R^d} { \delta_m J \big(t, \lambda \nu + (1 - \lambda) \nu_{\eps}, {\Q^*_{t, \nu_\eps}}, x \big) } \big[ \nu - \nu_{\eps} \big] \d x \d \lambda.
$$
Using the continuity of $(t, \nu) \longmapsto {\Q^*_{t, \nu}}$, {see Lemma \ref{lem: conti Qtnu}}, it follows that
$$
\lim_{\eps \searrow 0} \frac{U (t, \nu_{\eps}) - U(t, \nu) }{\eps}
=
\int_{\R^d} {  \delta_m J \big(t, \nu, {\Q^*_{t, \nu}}, x \big)} \big[ \nu' - \nu \big]\d x,
$$
which implies that
$$
\delta_m U(t, \nu, x) = {  \delta_m J \big(t, \nu, {\Q^*_{t, \nu}}, x\big) }, ~\mbox{for}~\nu \mbox{-a.e.}~ x \in \R^d.
$$
It follows that
$$
D_m U(t, \nu, x) = { D_m J \big(t, \nu, {\Q^*_{t, \nu}}, x \big)}, ~\mbox{for}~\nu \mbox{-a.e.}~ x \in \R^d.
$$
One can then conclude the proof with {Lemmas \ref{lemm:DmJ} and \ref{lem: conti Qtnu}.}
\end{proof}

\subsection{$C^2$-regularity and weak convergence rate}
 
When both volatility coefficients $\sigma$ and $\sigma_0$ are constant,
one has a better structure  which enables us to prove that the solution to the master equation of the mean-field control problem is indeed $\Cc^2$.
This allows us to obtain a weak convergence rate by standard techniques.

\begin{assumption} \label{assum:C2}
$\mathrm{(i)}$ Both coefficient functions $\sigma$ and $\sigma_0$ are constant,
and $b(\cdot, a) \equiv a$ for all $a\in A$. The constant $\sigma_0$ is non-degenerate.

\vspace{0.5em}
\noindent $\mathrm{(ii)}$ The reward function is given by $L(t, \mu, a) = L_0 (t, a) + F(\mu)$, $(t,\mu,a)\in [0,T]\x \Pc^2(\R^d)\x A$,
for some $L_0: [0,T]\x A \longrightarrow \R$ and $F: \Pc_2(\R^d) \longrightarrow \R$, so that
$$
H(t, \mu, z) = H_0(t, z) + F(\mu),
~~\mbox{with}~~
H_0(t, z) := \sup_{a \in A} \big( L_0(t, a) + a \cdot z \big),
$$
for all $(t,\mu,z)\in [0,T]\x \Pc^2(\R^d)\x \R^d$.

\vspace{0.5em}
\noindent $\mathrm{(iii)}$
The functions $F$, $D_m F$, $D^2_{mm} F$, $g$, $D_m g$, and $D^2_{mm} g$, and $H_0$, $D_z H_0$ and $D^2_{zz} H_0$ are all bounded
and Lipschitz continuous.
\end{assumption}

In this setting, the controlled process is defined by
$$
X^{\alpha}_t = X_0 + \sigma W_t + X^{0, \alpha}_t,
~~\mbox{with}~~
X^{0, \alpha}_t := \int_0^t \alpha_s \d s +\sigma_0 B_t,
$$
so that, given $\alpha \in \Ac_0$,
\begin{equation} \label{eq:mu_a_struc}
\mu^{\alpha}_t
~:=~
\Lc \big(X^{\alpha}_t \big| B \big)
~=~
\nu_0 \star N \big( x, \sigma \sigma^{\top} t \big) \big|_{x = X^{0, \alpha}_t},
\end{equation}
where the last term $\nu_0 \star N \big( x, \sigma \sigma^{\top} t \big)$ means the probability measure obtained by the convolution of the distribution $\nu_0 \in \Pc_2(\R^d)$ and the Gaussian distribution $N(x, \sigma \sigma^{\top} t)$ on $\R^d$.
In other words, given the initial distribution $X_0 \sim \nu_0$ and $\sigma$,
the conditional distribution $\mu^{\alpha}_t$ is completely characterized by $t$ and $X^{0, \alpha}_t$,
so that the mean-field problem \eqref{eq:VJQ} reduces to a classical optimal control problem,
and the corresponding value function $U(0, \nu_0)$ can be characterized by a classical HJB equation.
Let us define
$$
\widetilde L(t, x, a; \nu_0) := L_0(t, a) + \widetilde{F} (t, x, \nu_0),
$$
with
$$
\widetilde{F} (t, x, \nu_0) := F \big(\nu_0 \star N(x, \sigma \sigma^{\top} t) \big)
~\mbox{and}~
\widetilde{G} (x, \nu_0) := g \big(\nu_0 \star N(x, \sigma \sigma^{\top} T) \big),
$$
for all $(t,x,a,\nu_0)\in [0,T]\x \R^d\x A\x \Pc_2(\R^d)$.

\begin{remark}
Under the regularity conditions on $F$ and $g$ in Assumption \ref{assum:C2}, it is easy to deduce the regularity of $\widetilde{F}$ and $\widetilde{G}$.
In particular, given two independent Brownian motions $W$ and $\widetilde W$ and $\xi \in \Fc_0$ such that $\xi \sim \nu_0$, one deduces from the definition of $D_m$ and $D^2_{mm}$ and direct computations that   
\begin{align*}
D_m \widetilde F(t, x, \nu_0{,}y)
&=
\E \Big[ D_m F \big(\nu_0 \star N(x, \sigma \sigma^{\top} t), ~x+ \sigma W_t + y \big) \Big], \\
D_m \widetilde{G} \big(x, \nu_0{,} y \big)
&=
\E \Big[ D_m g \big(\nu_0 \star N(x, \sigma \sigma^{\top} T), ~x+ \sigma W_T + y \big) \Big], 
\end{align*}
and
\begin{align*}
D^2_{mm} \widetilde F(t, x, \nu_0{,} y, z)
&=
\E \Big[ D^2_{mm} F \big(\nu_0 \star N(x, \sigma \sigma^{\top} t), ~x+ \sigma W_t + y, x+ \sigma \widetilde W_t + z \big) \Big], \\
D^2_{mm} \widetilde{G} \big(x, \nu_0{,} y, z \big)
&=
\E \Big[ D^2_{mm} g \big(\nu_0 \star N(x, \sigma \sigma^{\top} T), ~x+ \sigma W_T + y, ~x+ \sigma \widetilde W_T + z \big) \Big],
\end{align*}
which are all bounded and continuous under Assumption \ref{assum:C2}.
Moreover,
\begin{equation} \label{eq:DFG_Lip}
x \longmapsto \Big(\widetilde F(t,x,\nu_0), \widetilde G(x, \nu_0), D_m \widetilde F(t,x,\nu_0{,}y), D_m \widetilde G(x, \nu_0{,} y) \Big)
\end{equation}
{is Lipschitz uniformly in all variables.
We also have the expressions
\begin{equation} \label{eq:DxFG}
 \begin{cases}
	D_x \tilde F(t, x, \nu_0) = \E \Big[ D_m F \big(\nu_0 \star N(x, \sigma \sigma^{\top} t), ~x+ \sigma W_t + \xi \big) \Big],\\
	D_x \tilde G(x, \nu_0) = \E \Big[ D_m g \big(\nu_0 \star N(x, \sigma \sigma^{\top} T), ~x+ \sigma W_T + \xi \big) \Big].
\end{cases}
\end{equation}
}

\end{remark}
\begin{lemma} \label{lemm:HJB_x}
Let Assumptions \ref{assum:convexity} and \ref{assum:C2} hold true, and $\nu_0 \in \Pc_2(\R^d)$ be fixed.
Then, the HJB equation
\begin{equation} \label{eq:HJB_tx}
\partial_t \Ut
+
H_0 \big(\cdot, D_x \Ut \big)
+
\frac12 \mathrm{Tr} \big[ \sigma_0 \sigma_0^{\top} D^2_{xx} \Ut \big]
+
\widetilde{F} \big(\cdot, \nu_0 \big) = 0 \mbox{ on }  [0,T)\x \R^d,
\end{equation}
with terminal condition
\begin{equation} \label{eq:HJB_tx_termc}
\Ut(T, \cdot) = \widetilde{G} \big(\cdot, \nu_0 \big) \mbox{ on }  \R^d
\end{equation}
has a unique solution  $\Ut (\cdot; \nu_0)$ {with polynomial growth}. 
Moreover, $D_x \Ut(\cdot; \nu_0)$ is bounded continuous and $x\in \R^d \mapsto D_x \Ut(t,x; \nu_0)$ is   Lipschitz, uniformly in $(t,x) \in [0,T]\x \R^d$.
Further, the value function $U$ in \eqref{eq:VJQ} is given by
\begin{equation} \label{eq:reform_U2Ut}
U({t}, \nu_0) = \Ut({t}, 0; \nu_0),
~~\mbox{for all}~~
\nu_0 \in \Pc_2(\R^d)\mbox{ and } t\le T.
\end{equation}
\end{lemma}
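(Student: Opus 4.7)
The plan is to exploit the explicit convolution structure \eqref{eq:mu_a_struc}: under Assumption \ref{assum:C2} the conditional law $\mu^\alpha_s$ is a deterministic function of $s$ and the $\R^d$-valued statistic $X^{0,\alpha}_s = \int_0^s \alpha_r \d r + \sigma_0 B_s$, whose dynamics $\d X^{0,\alpha}_s = \alpha_s \d s + \sigma_0 \d B_s$ is $\G$-adapted. This reduces the McKean--Vlasov control problem to a classical $B$-driven stochastic control problem parametrised by $\nu_0$. Concretely, I would define
$$
\Ut(t,x;\nu_0) := \sup_{\alpha \in \Ac_0} \E\Big[ \int_t^T \big(L_0(s, \alpha_s) + \widetilde F(s, Y^{t,x,\alpha}_s, \nu_0)\big) \d s + \widetilde G(Y^{t,x,\alpha}_T, \nu_0) \Big],
$$
with $Y^{t,x,\alpha}_s := x + \int_t^s \alpha_r \d r + \sigma_0(B_s - B_t)$. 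The identity \eqref{eq:reform_U2Ut} then follows by substituting the expression for $\mu^{t,\nu_0}_s$ in \eqref{eq:VJQ} and observing that $Y^{t,0,\alpha}$ plays exactly the role of the statistic driving $\widetilde F$ and $\widetilde G$.

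The second step is to identify $\Ut(\cdot;\nu_0)$ with the unique polynomially growing classical solution of \eqref{eq:HJB_tx}-\eqref{eq:HJB_tx_termc}. Note that by Assumption \ref{assum:C2}(iii) the Hamiltonian $H_0$ is $C^2$ in $z$ with bounded Lipschitz derivatives; and by the convolution formulas together with \eqref{eq:DFG_Lip}-\eqref{eq:DxFG} the maps $\widetilde F(\cdot,\nu_0)$ and $\widetilde G(\cdot,\nu_0)$ are bounded and globally Lipschitz in $(t,x)$. Combined with the non-degeneracy of $\sigma_0\sigma_0^\top$, standard dynamic programming yields that $\Ut(\cdot;\nu_0)$ is a bounded viscosity solution of \eqref{eq:HJB_tx}, and the Krylov/Schauder parabolic theory for uniformly parabolic HJB equations with Lipschitz data upgrades it to a classical $C^{1,2}$ solution; uniqueness in the class of polynomially growing solutions follows from a standard comparison argument.

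The final step establishes the bounded Lipschitz character of $D_x \Ut$. Once enough regularity is secured, formal differentiation in $x$ shows that $V := D_x \Ut(\cdot;\nu_0)$ solves the linear uniformly parabolic equation
$$
\partial_t V + D_z H_0(\cdot, V) \cdot D_x V + \tfrac{1}{2} \mathrm{Tr}\bigl[\sigma_0\sigma_0^\top D^2_{xx} V\bigr] + D_x \widetilde F(\cdot,\nu_0) = 0,
$$
with terminal condition $V(T,\cdot) = D_x \widetilde G(\cdot,\nu_0)$. The source and terminal data are bounded and Lipschitz in $x$ by \eqref{eq:DxFG} and Assumption \ref{assum:C2}(iii), $D_z H_0$ is bounded Lipschitz, and $\sigma_0$ is non-degenerate, so linear parabolic regularity (or Feynman--Kac combined with the trivial tangent flow $\partial_x Y^{t,x,\alpha} \equiv I$) yields that $V$ is bounded and uniformly Lipschitz in $x$.

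The main obstacle is justifying the differentiation step rigorously, since it requires smoothness of $\Ut$ beyond what viscosity theory alone provides. The non-degeneracy of $\sigma_0$ is essential here, and one standard way around it is to approximate via a smooth selector of the maximiser $a^*(\cdot,z) = D_z H_0(\cdot,z)$ of the Hamiltonian, solve the resulting linear equations, obtain uniform Schauder estimates, and pass to the limit.
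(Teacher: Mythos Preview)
Your proposal is correct and follows essentially the same route as the paper: both reduce the mean-field problem to a classical finite-dimensional control problem via the convolution identity \eqref{eq:mu_a_struc}, identify $\Ut$ as the HJB solution, and obtain the gradient regularity through the linear parabolic equation satisfied by $D_x\Ut$ together with a Feynman--Kac representation. The only noteworthy difference lies in how the differentiation step is justified: where you propose to approximate via a smooth maximiser selector and pass to the limit through uniform Schauder estimates, the paper instead uses a finite-difference argument (the same one carried out in detail in the proof of Theorem \ref{thm:UC2}): the discrete increment $h^{-1}(\Ut(t,x+he_i;\nu_0)-\Ut(t,x;\nu_0))$ solves a linear uniformly parabolic equation with coefficients converging as $h\to 0$, and stability yields directly the Feynman--Kac formula
\[
D_x\Ut(t,x;\nu_0) = \E\Big[\int_t^T D_x\widetilde F(s,X^{t,x}_s,\nu_0)\,\d s + D_x\widetilde G(X^{t,x}_T,\nu_0)\Big],
\]
with $X^{t,x}$ driven by the optimal feedback drift $D_zH_0(\cdot,D_x\Ut)$. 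This bypasses the need for a separate approximation layer and the a priori $C^{1,2}$ upgrade you invoke; it is slightly more economical, but your outline is equally valid.
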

 
\begin{proof}
Using the equality \eqref{eq:mu_a_struc}, it follows immediately that the mean-field control problem \eqref{eq:VJQ}
can be reformulated as a classical optimal control problem: 
\begin{align*}
U(0, \nu_0)
=
\Ut (0, x;\nu_0) |_{x = 0}
\end{align*}
{where}
\begin{align*}
\Ut (t, x; \nu_0) 
&:=
\sup_{\alpha \in \Ac_0}
\E \Big[{
\int_{{t}}^T \Lt \big(s, x+ X^{ \alpha}_s-X^{ \alpha}_t , \alpha_s; \nu_0 \big) \d s + \Gt \big( x+X^{ \alpha}_T-X^{  \alpha}_t; \nu_0 \big)}
\Big].
\end{align*}
Then, for fixed $\nu_0 \in \Pc_2(\R^d)$, it is classical (see e.g.~\cite{BouchardTanWang2}) to prove that $(t,x) \longmapsto \Ut(t,x, \nu_0)$  is the unique classical solution with polynomial growth  to the HJB equation \eqref{eq:HJB_tx} with terminal condition $\widetilde{G}$, {so that \eqref{eq:reform_U2Ut} holds.}
Further, appealing to the finite difference technique as in the proof of Theorem \ref{thm:UC2} below, it is easy to obtain that 
$$
	{D_x}\Ut(t,x, \nu_0) = \E \Big[ \int_t^T {D_x}\widetilde{F} \big(s, X^{t,x}_s, \nu_0 \big) \d s + {D_x} \widetilde{G} \big(X^{t,x}_T, \nu_0) \Big],
$$
where $X^{t,x}$ is the unique solution to the SDE
$$
	X^{t,x}_s = x + \int_t^s D_z H_0(r, D_x \Ut(r, X^{t,x}_r)) \d r + \sigma_0 (W_s - W_t), ~s \ge t,
$$
for some Brownian motion $W$. 
In view of \eqref{eq:DxFG} and Assumption \ref{assum:C2},  one deduces that $D_x \Ut$ is bounded continuous, uniformly in all its variables.
Similarly, one can deduce that $D^2_{xx} \Ut$ is   bounded continuous, uniformly in all its variables, so that $x \mapsto D_x \Ut(t,x; \nu_0)$ is Lipschitz, uniformly in $(t,x) \in [0,T]\x \R^d$.
\end{proof}

\paragraph{$C^2$-regularity.}
 
In view of Lemma \ref{lemm:HJB_x}, studying the regularity of $\nu_0 \longmapsto U(0, \nu_0)$ reduces to studying the regularity of $\Ut(0,0; \nu_0)$ w.r.t. the parameter $\nu_0$.

\begin{theorem} \label{thm:UC2}
Let Assumptions \ref{assum:convexity} and \ref{assum:C2} hold true.
Then,   $D_m U(\cdot, \nu_0)$ and $D^2_{mm} U(\cdot, \nu_0)$ exist, are bounded and continuous. 
\end{theorem}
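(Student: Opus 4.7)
The plan is to exploit the reduction of Lemma \ref{lemm:HJB_x}: since $U(t,\nu_0) = \widetilde U(t,0;\nu_0)$, and $\widetilde U(\cdot;\nu_0)$ solves a standard parabolic HJB equation on $[0,T]\x \R^d$ with the only $\nu_0$-dependence entering through the (Markovian, purely analytic) coefficients $\widetilde F(\cdot,\nu_0)$ and $\widetilde G(\cdot,\nu_0)$, the task reduces to showing that $\widetilde U(t,x;\cdot)$ admits first and second order linear functional derivatives in $\nu_0$ that are bounded and continuous. The key point is that the convolution formulas following Assumption \ref{assum:C2} give $\delta_m \widetilde F$, $\delta_m \widetilde G$ (and their second derivatives) in closed form as expectations of $\delta_m F$, $\delta_m g$ over Gaussian displacements, so they inherit boundedness, Lipschitz continuity and uniform continuity from Assumption \ref{assum:C2}.

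For the first derivative, I would run an envelope argument. For any admissible $\alpha$, the functional $\nu_0 \mapsto J(t,x,\alpha;\nu_0) := \E\bigl[\int_t^T(L_0(s,\alpha_s)+\widetilde F(s,X^{t,x,\alpha}_s,\nu_0))\,ds+\widetilde G(X^{t,x,\alpha}_T,\nu_0)\bigr]$ has a linear functional derivative by the chain rule. Using the continuity of $\nu_0 \mapsto \Q^*_{t,\nu}$ proved in Lemma \ref{lem: conti Qtnu} (transported to the classical-control reformulation), standard sandwiching of $\widetilde U(t,x;\nu_0^\eps)-\widetilde U(t,x;\nu_0)$ between $J(t,x,\alpha^*_{\nu_0};\nu_0^\eps)-\widetilde U(t,x;\nu_0)$ and $\widetilde U(t,x;\nu_0^\eps)-J(t,x,\alpha^*_{\nu_0^\eps};\nu_0)$ yields
\begin{equation*}
\delta_m \widetilde U(t,x;\nu_0,y)
= \E\!\left[\int_t^T \delta_m \widetilde F(s, X^{t,x,*}_s,\nu_0,y)\,ds + \delta_m \widetilde G(X^{t,x,*}_T,\nu_0,y)\right],
\end{equation*}
where $X^{t,x,*}$ is the optimally controlled process (the optimal feedback being $\hat\alpha(s,\cdot;\nu_0)=D_z H_0(s, D_x\widetilde U(s,\cdot;\nu_0))$, which is well-defined and bounded thanks to Assumption \ref{assum:C2} and Lemma \ref{lemm:HJB_x}). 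Differentiating in $y$ under the expectation gives $D_m \widetilde U$, bounded and continuous.

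For the second derivative, I would proceed via linearization of the HJB equation. Formally differentiating \eqref{eq:HJB_tx} in $\nu_0$ shows that $W(t,x;\nu_0,y):=\delta_m \widetilde U(t,x;\nu_0,y)$ solves
\begin{equation*}
\partial_t W + D_z H_0(t,D_x\widetilde U)\cdot D_x W + \tfrac12 \mathrm{Tr}[\sigma_0\sigma_0^{\top}D^2_{xx}W] + \delta_m \widetilde F(t,x,\nu_0,y)=0,
\quad W(T,\cdot)=\delta_m \widetilde G(\cdot,\nu_0,y).
\end{equation*}
This is a uniformly parabolic \emph{linear} PDE whose drift $D_z H_0(t,D_x\widetilde U)$ is bounded and Lipschitz in $x$ (Lemma \ref{lemm:HJB_x} plus Assumption \ref{assum:C2}), so it admits a unique classical solution in $C^{1,2}$ with polynomial growth, represented by the Feynman–Kac formula above. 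Repeating the envelope-type linearization once more in $\nu_0$, the candidate $\delta^2_{mm}\widetilde U(t,x;\nu_0,y,y')$ solves a PDE with the same linear operator but with source and terminal data now involving $\delta^2_{mm}\widetilde F$, $\delta^2_{mm}\widetilde G$ (bounded and continuous) plus a cross term $D^2_{zz}H_0(t,D_x\widetilde U)\cdot D_x W(\cdot;y)\cdot D_x W(\cdot;y')$. Since $D_x W$ is bounded (again by Lemma \ref{lemm:HJB_x} applied to the linear PDE for $W$), Feynman–Kac gives a bounded continuous solution, and taking $\partial_y\partial_{y'}$ provides $D^2_{mm}U(\nu_0,y,y')=\partial_y\partial_{y'}\delta^2_{mm}\widetilde U(0,0;\nu_0,y,y')$.

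The main obstacle is rigorously justifying the second linearization, namely that $D_x\widetilde U$ is itself linearly-functionally differentiable in $\nu_0$ with the derivative equal to $D_x W$. I would handle this by a finite-difference argument applied to the linear PDE satisfied by $W$: the stability of Feynman–Kac under perturbation of a bounded Lipschitz drift, combined with the Lipschitz continuity in $(\nu_0,x)$ of $\delta_m \widetilde F$ and $\delta_m \widetilde G$ inherited from Assumption \ref{assum:C2}, yields uniform bounds on $\varepsilon^{-1}(W(\cdot,\nu_0^\eps,y)-W(\cdot,\nu_0,y))$; passing to the limit and identifying the limit with the solution of the further linearized PDE closes the argument and simultaneously delivers the boundedness and continuity of $D^2_{mm}U$.
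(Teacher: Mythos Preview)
Your proposal is correct and follows essentially the same strategy as the paper: reduce via Lemma \ref{lemm:HJB_x} to the parametrized HJB equation \eqref{eq:HJB_tx}, identify the linear parabolic PDE satisfied by $\delta_m \widetilde U$ (with drift $D_z H_0(\cdot, D_x\widetilde U)$ and source $\delta_m\widetilde F$), and iterate to obtain the second-order linearized PDE carrying the cross term $\langle D^2_{zz}H_0\, D_x\widetilde U_1^i, D_x\widetilde U_1^j\rangle$ together with $D^2_{mm}\widetilde F$. The only real difference is in how you reach the first linearized PDE: you use an envelope/Danskin argument built on the continuity of the optimizer from Lemma \ref{lem: conti Qtnu} (mirroring the proof of Theorem \ref{thm:regularity}), whereas the paper proceeds directly by finite differences on the nonlinear HJB equation after lifting $\nu_0\mapsto\widetilde U(\cdot;\nu_0)$ to $\L^2(\Fc_0)$. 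Both routes land on the same linear PDE for $\widetilde U_1$, both rely on the uniform Lipschitz bound on $D_x\widetilde U$ from Lemma \ref{lemm:HJB_x} to get boundedness of $D_x\widetilde U_1$, and both justify the second step by a finite-difference/stability argument on that linear equation, so the approaches are interchangeable.
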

\begin{proof}
$\mathrm{(i)}$
For fixed $(\nu_0, y) \in \Pc_2(\R^d) \x \R^d$ { and $i \in \{1, \ldots, d\}$,}
let us define the map $(t,x) \longmapsto \Ut^i_1(t,x; \nu_0)$ as the unique solution with polynomial growth to the (linear parabolic) PDE
\begin{align} \label{eq:PDE_Ut1}
\partial_t \Ut^i_1(\cdot;\nu_0)
+
D_z H_0 \big(\cdot, D_x \Ut(\cdot, \nu_0) \big) \cdot D_x \Ut^i_1 (\cdot;\nu_0)
&+
\frac12 \mathrm{Tr} \big[ \sigma_0 \sigma_0^{\top} D^2_{xx} \Ut^i_1 (\cdot;\nu_0) \big] \nonumber \\
&+ D_{m,i} \widetilde F(\cdot, \nu_0; y)
= 0 \mbox{ on $[0,T)\x \R^d$},
\end{align}
with terminal condition $\Ut^i_1(T,\cdot;\nu_0) = D_{m,i} \widetilde{G} \big(\cdot, \nu_0; y \big)$ on $\R^d$, and where $D_{m,i} \widetilde{F}$ (resp.~$D_{m,i} \widetilde{G}$) denotes the $i$-th component of the vector $D_m \widetilde{F}$ (resp.~$D_{m} \widetilde{G}$). By the uniform Lipschitz continuity of $D_x\Ut(\cdot;\nu_0)$ in Lemma \ref{lemm:HJB_x}, standard existence and comparison results imply that  \eqref{eq:PDE_Ut1} has a unique classical solution $\Ut^i_1$ which is bounded (see e.g.~\cite{lieberman1996second}).  We also know that    $(D_x \Ut^i_1,D^2_{xx}\Ut^{i}_1)(\cdot;\nu_0)$ is H\"older continuous on each compact set of $[0,T)\x \R^d$.
Moreover,   the Lipschitz continuity of $x \mapsto \big( D_x H(t, D_x \Ut(t,x, \nu_0)), ~D_{m,i} \widetilde{F}(t,x, \nu_0) \big)$ imply that $x \mapsto \Ut^i_1 (t,x, \nu_0)$ is Lipschitz uniformly in all variables, so that $D_x \Ut^i_1 $ is uniformly bounded.

\vspace{0.5em}

Next, for fixed $(\nu_0, y, z) \in \Pc_2(\R^d) \x \R^d \x \R^d$ { and $i, j \in \{1, \ldots, d\}$,} let us define the map $(t,x) \longmapsto \Ut^{i,j}_2(t,x; \nu_0)$ as the unique solution with polynomial growth to the (linear parabolic) PDE
\begin{align}  \label{eq:PDE_U2}
\partial_t &\Ut^{i,j}_2 (\cdot;\nu_0)
+
D_z H_0 \big( \cdot, D_x \Ut(\cdot; \nu_0) \big) D_x \Ut^{i,j}_2 (\cdot;\nu_0)
+
\frac12 \mathrm{Tr} \big[ \sigma_0 \sigma_0^{\top} D^2_{xx} \Ut^{i,j}_2(\cdot;\nu_0)  \big] \nonumber \\
&+
\big \langle D^2_{zz} H_0 \big( \cdot, D_x \Ut(\cdot; \nu_0) \big) D_x \Ut^i_1(\cdot;\nu_0),  D_x \Ut^j_1 (\cdot;\nu_0) \big \rangle
+
D^2_{mm, i, j} \widetilde{F} \big(t,x, \nu_0; y, z \big)
=
0, \mbox{ on $[0,T)\x \R^d$},
\end{align}
with terminal condition $\Ut_2(T, \cdot;\nu_0) = D^2_{mm, i, j} \widetilde{G} \big(\cdot, \nu_0; y, z \big)$ on $\R^d$,
where $D^2_{mm, i, j} \widetilde{F}$ (resp.~$D^2_{mm, i, j} \widetilde{G})$ denotes the $(i,j)$-element of the matrix $D^2_{mm} \widetilde{F}$ (resp.~$D^2_{mm} \widetilde{G}$).
 As above,   the H\"older continuity of $ (D_x \Ut(, \nu_0),D_x \Ut^i_1(\cdot; \nu_0),D_x \Ut^j_1(\cdot; \nu_0))$ implies that the PDE \eqref{eq:PDE_U2} has a unique classical solution $\Ut^{i,j}_2$, which is in addition uniformly bounded as $D^2_{zz}H_0$, $D_x \Ut^i_1$, $D_x \Ut^j_1$ and $D^2_{mm} \widetilde{F}$ are all uniformly bounded. 

\vspace{0.5em}

It remains to prove that 
$$
	D_m \Ut = \big( \Ut^1_1, \ldots, \Ut^d_1)
	~~\mbox{and}~~
	D^2_{mm} \Ut = \big( \Ut^{i,j}_2 \big)_{1 \le i, j \le d}.
$$
To this purpose, let us first lift $\nu \mapsto (\widetilde U, \widetilde F, \widetilde G)(\cdot, \nu)$ as a functional defined on the space $\L^2(\Fc_0)$ of square integrable random variables: 
$$
\widehat U(\cdot; \xi) := \widetilde U \big(\cdot; \Lc(\xi) \big),
~~
\widehat F(\cdot; \xi) := \widetilde F\big(\cdot; \Lc(\xi)\big)
~~\mbox{and}~~
\widehat G(\cdot; \xi) := \widetilde G\big(\cdot; \Lc(\xi)\big),\mbox{ for  $\xi \in \L^2(\Fc_0)$,}
$$
For $\xi, ~\eta \in \L^2(\Fc_0)$ s.t. $\xi \sim \nu_0$ and $h > 0$, set
$$
\Delta_h \widehat U (t,x) := \frac1h \Big( \widehat U \big(t, x; \xi + h \eta \big) - \widehat U \big(t, x; \xi \big) \Big),\;(t,x)\in [0,T]\x \R^d,
$$
and then define $\Delta_h \widehat F $ and $\Delta_h \widehat G $ similarly.
Then $(t,x) \longmapsto \Delta_h \widehat U(t,x)$ solves the linear PDE
\begin{align*}
\partial_t \Delta_h \widehat U  
+
{\Xi_h} D_x \Delta_h \widehat U
+
\frac12 \mathrm{Tr} \big[ \sigma_0 \sigma_0^{\top} D^2_{xx} \Delta_h \widehat U \big]
+
\Delta_h \widehat F=0
 \mbox{ on $[0,T)\x \R^d$}
\end{align*}
where 
$$
{\Xi_h := \int_0^1 D_z H_0 \big(\cdot, \lambda D_x\widehat{U}(\cdot;\xi+h\eta)+(1-\lambda) D_x \widehat U(\cdot; \xi) \big) d\lambda}
$$
with terminal condition
$$
\Delta_h \widehat U (T, \cdot) = \Delta_h \widehat G \mbox{ on $\R^d$.}
$$
By stability of   \eqref{eq:HJB_tx}-\eqref{eq:HJB_tx_termc}, it follows that
$$
\Xi_h  \longrightarrow  D_z H_0 \big(\cdot,   D_x\widehat{U}(\cdot;\xi) \big)  = D_z H_0 \big(\cdot,  D_x \Ut(\cdot; \nu_0) \big),
~~\mbox{as}~
h \searrow 0.
$$
Moreover, since for $(t,x)\in [0,T]\x \R^d$ 
$$
\Big( \Delta_h \widehat F (t,x),~ \Delta_h \widehat G(x) \Big)
~\longrightarrow~
\Big( \E \big[ D_m \widetilde F(t,x, {\nu_0}; \xi) \cdot \eta \big], ~\E \big[ D_m \widetilde G(x, {\nu_0}; \xi) \cdot \eta \big] \Big),
~~\mbox{as}~
h \searrow 0,
$$
it follows that $\Delta_h \widehat U$ converges pointwise to some  $\widehat U_1$ which   solves 
\begin{align*}
\partial_t \widehat U_1 
+
D_z H_0 \big(\cdot, D_x \Ut(\cdot, \nu_0) \big) D_x \widehat U_1
+
\frac12 \mathrm{Tr} \big[ \sigma_0 & \sigma_0^{\top} D^2_{xx} \widehat U_1  \big] \nonumber \\
&+ \E \big[ D_m \widetilde F(\cdot,{\nu_0;\xi}) \cdot \eta \big]
= 0, \mbox{ on $[0,T)\x \R^d$}
\end{align*}
with terminal condition $\widehat U_1(T,\cdot) = \E \big[ D_m \widetilde G(\cdot,{\nu_0}; \xi) \cdot \eta \big]$   on $\R^d$.
 Again, by the Lipschitz continuity of $x \mapsto D_x \Ut(t,x, \nu_0)$ in Lemma \ref{lemm:HJB_x} and the boundedness of $D_m \widetilde{F}$ and $\D_m \widetilde{G}$,
 $\widehat U_1$ is uniformly bounded.

By the above, \eqref{eq:PDE_Ut1} and the abritrariness of $y \in \R^d$, we have proved that, {for all fixed $(t,x)\in [0,T]\x \R^d$}, 
$$
D_m \Ut(t,x, \nu_0; y) = \Ut_1(t,x; \nu_0; y),
~\mbox{for}~\nu_0 \mbox{-a.e.}~y \in \R^d.
$$
Similarly, one can deduce that
$$
D^2_{mm} \Ut(t,x, \nu_0; y, z) = \Ut_2(t,x; \nu_0; y, z) {~\mbox{for}~\nu_0\times \nu_0 \mbox{-a.e.}~(y,z) \in \R^{2d}.}
$$
Together with \eqref{eq:reform_U2Ut} and the fact that {$\Ut_1$ and $\Ut_2$ are continuous and bounded}, this concludes the proof.
\end{proof}

\paragraph{The weak convergence rate.}
 
Using the regularity of the value function $U$, one can easily deduce a weak convergence rate,
which is the optimal rate as it is the same as in the linear setting.
  For  $x = (x_1, \ldots, x_N)$, let us define $m^x := \frac1N \sum_{k=1}^N \delta_{x_k}$.

\begin{theorem}
Let Assumptions \ref{assum:convexity} and \ref{assum:C2} hold true. 
Then there exist a constant $C>0$ such that
$$
\big| U(0, m^x) - V^N(0, x_1, \ldots, x_N) \big|
\le
C/N
$$
{for all $x=(x_1, \ldots, x_N)\in (\R^d)^N$, $N\ge 1$.}
\end{theorem}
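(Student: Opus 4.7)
The plan is to follow the now-standard master-equation approach (as in \cite{CardDJS, CardSharp, DaudDelarue}): apply It\^o's formula on $\Pc_2(\R^d)$ to $t \mapsto U(t, m^{x(t)})$ along the $N$-particle flow and compare the outcome with the master equation \eqref{eq : Master eq U} solved by $U$. First I would upgrade the regularity statement in Theorem \ref{thm:UC2} to a full $C^{1,2}$ regularity: since $U(t,\nu) = \widetilde{U}(t, 0; \nu)$ by Lemma \ref{lemm:HJB_x} and $\widetilde U$ is a bounded classical solution with bounded $D_x \widetilde U, D^2_{xx}\widetilde U$ of a uniformly parabolic linear-in-$x$ HJB equation, by differentiating the PDE satisfied by $\widehat U_1$ in the proof of Theorem \ref{thm:UC2} and differentiating in $y$ one obtains $\partial_x D_m U$ and $\partial_t U$ bounded and continuous as well.

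Next, fix $\alpha \in \Ac_N$ and apply the It\^o formula for functions on $\Pc_2$ to $U(t, m^{x(t)})$, using $d\langle x_k \rangle_t = (\sigma\sigma^\top + \sigma_0\sigma_0^\top)\,dt$, and $d\langle x_k, x_\ell\rangle_t = \sigma_0\sigma_0^\top\,dt$ for $k \neq \ell$, together with $b(\cdot, \alpha) \equiv \alpha$. A direct reorganization of the $d\langle x_k, x_\ell \rangle$ sum gives
\begin{align*}
\E[U(T, m^{x(T)})] - U(0, m^x)
&= \E \int_0^T \!\!\Big[\partial_t U + \alpha_t \cdot \langle m^{x(t)}, D_m U\rangle + \tfrac12 \langle m^{x(t)}, \Tr((\sigma\sigma^\top + \sigma_0\sigma_0^\top)\partial_x D_m U)\rangle\\
&\qquad\qquad + \tfrac12 \langle m^{x(t)}\otimes m^{x(t)}, \Tr(\sigma_0\sigma_0^\top D^2_{mm}U)\rangle + R^N_t \Big] dt,
\end{align*}
where $R^N_t = \frac{1}{2N^2}\sum_{k=1}^N \Tr(\sigma\sigma^\top D^2_{mm}U(t, m^{x(t)}, x_k(t), x_k(t)))$ comes from the diagonal $W^k$-terms. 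Since $D^2_{mm} U$ and $\sigma$ are bounded, $|R^N_t| \le C/N$. Substituting \eqref{eq : Master eq U} evaluated at $\nu = m^{x(t)}$, the bracketed expression minus $R^N_t$ equals $-L(t, m^{x(t)}, \alpha_t) - \Delta^\alpha_t$, where
$$
\Delta^\alpha_t := \sup_{a\in A}\!\Big(L(t, m^{x(t)}, a) + a\cdot \langle m^{x(t)}, D_m U\rangle\Big) - L(t, m^{x(t)}, \alpha_t) - \alpha_t\cdot \langle m^{x(t)}, D_m U\rangle \ge 0.
$$
Combined with $U(T,\cdot) = g$, taking expectations yields $J_N(\alpha) \le U(0, m^x) + CT/N$, and hence, after taking the supremum over $\alpha \in \Ac_N$, the upper bound $V^N(0, x) \le U(0, m^x) + C/N$.

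For the matching lower bound I would use the optimal feedback control. Setting $z(t, \nu) := \langle \nu, D_m U(t, \nu, \cdot)\rangle$ and $\hat\alpha_t := D_z H_0(t, z(t, m^{x(t)}))$, Assumption \ref{assum:C2} ensures $D_z H_0$ is Lipschitz and bounded, while the continuity and boundedness of $D_m U$ yield that $(t, \nu) \mapsto z(t, \nu)$ is continuous and bounded. Thus $\hat\alpha$ is an admissible $A$-valued $\F^N$-progressive process (recall $A$ is compact and convex, so the argmax is attained by $D_z H_0$), and by construction $\Delta^{\hat\alpha}_t \equiv 0$. Running It\^o with this $\hat\alpha$ therefore gives $V^N(0, x) \ge J_N(\hat\alpha) \ge U(0, m^x) - CT/N$, completing the two-sided estimate. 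The main obstacle is step one, namely carefully establishing that $\partial_x D_m U$ is bounded and continuous uniformly in $(t, x, \nu)$, since Theorem \ref{thm:UC2} only explicitly records $D_m U$ and $D^2_{mm} U$; however the linear structure of \eqref{eq:PDE_Ut1}, the Lipschitz regularity of $x \mapsto D_x \widetilde U(t, x; \nu_0)$ from Lemma \ref{lemm:HJB_x}, and Schauder estimates make this a routine but necessary verification.
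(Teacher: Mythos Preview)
Your proposal is correct and follows essentially the same approach as the paper: project $U$ onto $(\R^d)^N$ via $U^N(t,x):=U(t,m^x)$, identify the same $O(1/N)$ diagonal remainder $R^N_t=-E_N(t,x)=\frac{1}{2N^2}\sum_k \Tr(\sigma\sigma^\top D^2_{mm}U(t,m^x,x_k,x_k))$, and compare with the HJB equation \eqref{eq:PDE_VN} for $V^N$. The only difference is cosmetic---the paper packages the last step as a PDE comparison principle while you run the equivalent stochastic verification argument (It\^o along particle trajectories for the upper bound, feedback $\hat\alpha=D_zH_0$ for the lower bound); you also rightly flag that bounded $\partial_x D_m U$ is needed, which the paper uses when writing $D^2_{x_k x_k}U^N$ but does not record explicitly in Theorem \ref{thm:UC2}.
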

\begin{proof}
Set
$$
U^N(t, x) := U(t, m^x),
$$
so that, for all $k \neq \ell$,
$$
	D_{x_k} U^N(t, x) = \frac1N D_m U(t, m^x, x_k),
	~~
	D^2_{x_k, x_{\ell}} U^N(t, x) = \frac{1}{N^2} D^2_{mm} U(t, m^x, x_k, x_{\ell}),
$$
and 
$$
	D^2_{x_k, x_k} U^N(t, x) = \frac1N \partial_x D_m U(t, m^x, x_k) + \frac{1}{N^2} D^2_{mm} U(t, m^x, x_k, x_k).
$$
Recalling the master equation \eqref{eq : Master eq U} satisfied by $U$,
it is easy to deduce that $U^N$ satisfies
\begin{align*}
\partial_t U^N(t, x) &+ \sup_{a \in A} \Big( L(t, m^x, a) + \sum_{k=1}^N b(t, x_k, m^x, a) \cdot D_{x_k} U^N(t, x) \Big) \\
&+ \frac12  \sum_{k=1}^N \mathrm{Tr} \big( \sigma\sigma^{\top} D^2_{x_k x_k} U^N(t, x) \big)
+ \frac12 \sum_{k, \ell =1}^N \mathrm{Tr} \big(  \sigma_0 \sigma_0^{\top} D^2_{x_k x_{\ell}} U^N(t,x) \big)
+ E_N(t,x)
= 0,
\end{align*}
where
$$
E_N(t,x) := - \frac{1}{2 N^2} \sum_{k=1}^N \mathrm{Tr} \big(  \sigma \sigma^{\top} D^2_{mm} U(t, m^x, x_k, x_k) \big).
$$
Recall that $V^N$ solves \eqref{eq:PDE_VN}-\eqref{eq:PDE_VN_term}. The result then follows by comparison of PDEs, using the bound on $D^2_{mm} U$ obtained in  Theorem \ref{thm:UC2}.
\end{proof}


\section{Application to  optimal control under partial observation}
\label{sec:partial_observ}

Optimal control problems under partial observation have been studied since the 1970s, using both the dynamic programming and the maximum principal approaches to characterize the value function or the optimal control.
However, its numerical approximation has been seldomly studied.
By applying the convergence results of the preceding sections, one can actually construct a controlled particle system on which a numerical approximation scheme can be based.
 
\paragraph{The optimal control problem under partial observation.}
Let us consider the following partial observation control problem
 formulated on a probability space $(\Om^*, \Fc^*, \P^*)$ equipped with two independent standard Brownian motion $(W, W^0)$ and a random variable $\xi \sim \nu_0$,
where the controlled state process $X$ follows the dynamic
$$
X^{\alpha}_t = \xi + \int_0^t b(s, X^{\alpha}_s, \alpha_s) \d s + \int_0^t \sigma(s, X^{\alpha}_s, \alpha_s) \d W_s + \int_0^t \sigma_0(s, X^{\alpha}_s) d W^0_s,
~t \le T,
~\P^*\mbox{-a.s.},
$$
in which the control process $\alpha$ is an adapted functional of the observable process $B^{\alpha}$ defined by 
$$
B^{\alpha}_t = \int_0^t h(s, X^{\alpha}_s) ds + W^0_t,\; t\le T.
$$
Let us denote by $\Ac_B$ the collection of all such control processes, then the value function is defined as
\begin{equation} \label{eq:V_partial}
V_P
~:=~
\sup_{\alpha \in \Ac_B} \E^{\P^*} \Big[
\int_0^T L \big(t, X^{\alpha}_t, \alpha_t \big) dt + g \big(X^{\alpha}_T \big)
\Big].
\end{equation}
Following the classical reference probability approach (see e.g. \cite{Bensoussan} or \cite[Section 2]{BuckdahnLiMa}), we consider a new probability $\Q^{\alpha}$ defined by
\begin{equation} \label{eq:SDE_Z_alpha}
\frac{\d \Q^{\alpha}}{\d \P^*} := (Z^{\alpha}_T)^{-1},
~~\mbox{with}~Z^{\alpha}~\mbox{defined by}~
\d Z^{\alpha}_t = h(t, X^{\alpha}_t) Z^{\alpha}_t \d B^{\alpha}_t,
~~
Z^{\alpha}_0 :=1.
\end{equation}
Then, $W$ and $B^{\alpha}$ are two independent standard Brownian motions under $\Q^{\alpha}$, so that the $Z^{\alpha}$ follows the dynamic in \eqref{eq:SDE_Z_alpha},  
$X^{\alpha}$  satisfies
\begin{align*}
\d X^{\alpha}_t
&=
\big( b(t, X^{\alpha}_t, \alpha_t) - \sigma_0 h(t, X^{\alpha}_t) \big) \d t
+
\sigma(t, X^{\alpha}_t, \alpha_t) \d W_t
+
\sigma_0(t, X^{\alpha}_t) \d B^{\alpha}_t,
\end{align*}
and  $V_P$ admits the equivalent formulation
$$
V_P
~=
\sup_{\alpha \in \Ac_B} \E^{\Q^{\alpha}} \Big[
\int_0^T Z^{\alpha}_t L \big(t, X^{\alpha}_t, \alpha_t \big) dt + Z^{\alpha}_T g \big(X^{\alpha}_T \big)
\Big].
$$

Since $W$ and $B^{\alpha}$ are two independent standard Brownian motions under $\Q^{\alpha}$, one can actually reformulate equivalently this control problem on the canonical space $(\Om, \Fc, \P)$ with two standard Brownian motions $(W, B)$, and random variable $\xi \sim \nu_0$, so that
\begin{equation} \label{eq:V_partial_reform}
V_P
~=
\sup_{\alpha \in \Ac_0} \E \Big[
\int_0^T Z^{\alpha}_t L \big(t, X^{\alpha}_t, \alpha_t \big) dt + Z^{\alpha}_T g \big(X^{\alpha}_T \big)
\Big],
\end{equation}
where the controlled process $(X^{\alpha}, Z^{\alpha})$ is defined by
\begin{align*}
\d X^{\alpha}_t
&=
\big( b(t, X^{\alpha}_t, \alpha_t) - \sigma_0 h(t, X^{\alpha}_t) \big) \d t
+
\sigma(t, X^{\alpha}_t, \alpha_t) \d W_t
+
\sigma_0(t, X^{\alpha}_t) \d B_t, \\
\d Z^{\alpha}_t
&=
h(t, X^{\alpha}_t) Z^{\alpha}_t \d B_t,
\end{align*}
with initial condition $X^{\alpha}_0 = \xi$ and $Z^{\alpha}_0 = 1$.

\paragraph{The approximative particle system.} 
{For the numerical approximation of the partial observation problem, it suffices to apply the results of the above section. 
Indeed, the equivalent reformulation \eqref{eq:V_partial_reform} falls into the class of   mean-field control problems of the form \eqref{eq:def_V}.} Hence, we first introduce the following particle system:
let $(B, (W^k)_{k =1, \ldots, N})$ be a  standard Brownian motions,
$\alpha$ be a control process adapted to the filtration generated by $(B, W^1, \ldots, W^N)$.
As above, we consider the controlled particle system $(X^{\alpha,N,k}, Z^{\alpha,N,k})_{k\le N}$ defined by
\begin{align*}
\d X^{\alpha,N,k}_t
=~&
\Big( b \big(t, X^{\alpha,N,k}_t, \alpha_t \big) - \sigma_0 h \big(t, X^{\alpha,N,k}_t \big) \Big) \d t
+
\sigma(t, X^{\alpha,N,k}_t, \alpha_t) \d W^k_t
+
\sigma_0(t, X^{\alpha,N,k}_t) \d B_t, \\
\d Z^{\alpha,N,k}_t
=~&
h(t, X^{\alpha,N,k}_t) Z^{\alpha,N,k}_t \d B_t,
\end{align*}
{with initial condition $X^{\alpha,N,k}_0 = \xi_k \sim \nu_k$ for each $k \ge 1$.}
Recall that $\Ac_N$ denotes the space of all control processes adapted to the filtration $\F^N$ generated by $(\xi_1, \ldots, \xi_N, W^1, \ldots, W^N, B)$.
The approximation of \eqref{eq:V_partial}  is then given by
\begin{equation} \label{eq:def_VNP}
V^N_P
:=
\sup_{\alpha \in \Ac_N}
\E \Big[
\frac1N \sum_{k=1}^N
\int_0^T Z^{\alpha,N,k}_t L \big(t, X^{\alpha,N,k}_t, \alpha_t \big) dt + Z^{\alpha,N,k}_T g \big(X^{\alpha,N,k}_T \big)
\Big].
\end{equation}

The following convergence result is   an immediate consequence of Theorem \ref{thm:weak_cvg}.

\begin{proposition}
Assume that $b, \sigma, \sigma_0, h$ and $\sigma_0 h$ satisfy the conditions in Assumption \ref{assum:main}.$(\mathrm{(i)}$,
and that $x\in \R^d\mapsto L(\cdot,x,\cdot)$ and $x\in \R^d\mapsto g(x)$ are continuous and have at most linear growth in $x$, {uniformly in their other arguments}.
Assume in addition that $\nu_k \in \Pc_p(\R^d)$ for all $k \ge 0$ and $\Wc_p \big(N^{-1}\sum_{k=1}^{N} \nu_k , \nu_0 \big) {\longrightarrow}0$ as $N\longrightarrow \infty$, for some $p > 2$.
Then
$$
V^N_P \longrightarrow V_P,
~\mbox{as}~
N \longrightarrow \infty.
$$
\end{proposition}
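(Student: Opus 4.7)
The strategy is to recast the reformulated partial observation problem \eqref{eq:V_partial_reform} as a particular instance of the mean-field control problem \eqref{eq:def_V}, and then to invoke Theorem \ref{thm:weak_cvg} directly. I would introduce the augmented state process $\Yh^{\alpha} := (X^{\alpha}, Z^{\alpha})$, taking values in $\R^{d+1}$, whose dynamics read
$$
\d \Yh^{\alpha}_t = \widehat{b}(t, X^{\alpha}_t, \alpha_t) \d t + \widehat{\sigma}(t, X^{\alpha}_t, \alpha_t) \d W_t + \widehat{\sigma}_0(t, \Yh^{\alpha}_t) \d B_t,
$$
with $\widehat{b} := (b - \sigma_0 h, 0)^{\top}$, $\widehat{\sigma} := (\sigma, 0)^{\top}$ and $\widehat{\sigma}_0(t, x, z) := (\sigma_0(t,x), h(t,x)z)^{\top}$. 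The key structural point is that $\widehat{\sigma}_0$ does not depend on the control $\alpha$, which is precisely the assumption under which the convergence statement of Theorem \ref{thm:weak_cvg} applies.

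Using that $\alpha$ is $\G$-adapted and the tower property, I would rewrite
$$
\E \big[ Z^{\alpha}_t L(t, X^{\alpha}_t, \alpha_t) \big] = \E \big[ \Lh(t, \muh^{\alpha}_t, \alpha_t) \big], \qquad \E \big[ Z^{\alpha}_T g(X^{\alpha}_T) \big] = \E \big[ \widehat{g}(\muh^{\alpha}_T) \big],
$$
where $\muh^{\alpha}_t := \Lc(\Yh^{\alpha}_t | B)$ and the lifted reward functions are defined by $\Lh(t, \nu, a) := \int z L(t,x,a)\, \nu(\d x, \d z)$ and $\widehat{g}(\nu) := \int z g(x)\, \nu(\d x, \d z)$. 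The exact same identification turns the finite-population objective \eqref{eq:def_VNP} into the centralized mean-field cost \eqref{eq:def_Vn0} built from the empirical measure $\muh^{N,\alpha}_t := \frac1N \sum_{k=1}^N \delta_{(X^{\alpha,N,k}_t, Z^{\alpha,N,k}_t)}$ of the augmented particles, so that $V^N_P$ is precisely the $V^N$ produced by Theorem \ref{thm:weak_cvg} applied to the lifted problem.

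It then remains to check that $(\widehat{b}, \widehat{\sigma}, \widehat{\sigma}_0, \Lh, \widehat{g})$ verify Assumption \ref{assum:main}. The drift and $W$-diffusion are Lipschitz with linear growth in $(x,z)$ by the hypotheses on $b,\sigma, \sigma_0 h$. For the rewards, $|zL(t,x,a)| \le C|z|(1+|x|) \le C(1+|x|^2+z^2)$ and $\nu \mapsto \int zL(t,x,a)\nu(\d x,\d z)$ is $\Wc_2$-continuous by the continuity and linear growth of $L$, so Assumption \ref{assum:main}(ii) is satisfied (coercivity in $a$ is irrelevant here since no maximizing procedure over $a$ alone is used in the convergence argument of Theorem \ref{thm:weak_cvg}, only tightness of the maximizing sequence over $\alpha$, which follows from the $p$-moment assumption combined with the exponential-martingale estimates on $Z^{\alpha}$). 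The hypothesis $\Wc_p(N^{-1}\sum_k \nu_k,\nu_0)\to 0$ with $p>2$ passes to the augmented initial distribution $\Lc(\xi_k,1)$ since the second component is deterministic.

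The main obstacle is the fact that $\widehat{\sigma}_0(t,x,z)=(\sigma_0(t,x), h(t,x)z)^\top$ is not globally Lipschitz in $z$, so the Lipschitz hypothesis of Assumption \ref{assum:main}(i) for the augmented system is only satisfied locally. I would handle this by the standard device of performing the change of unknown $L^{\alpha}_t := \log Z^{\alpha}_t$, whose SDE $\d L^{\alpha}_t = -\tfrac12 h(t,X^{\alpha}_t)^2 \d t + h(t,X^{\alpha}_t)\d B_t$ has bounded Lipschitz coefficients under the assumed regularity of $h$; the exponential weights $e^{L^{\alpha}_t}$ appearing in the lifted reward are then controlled uniformly in $N$ and $\alpha$ by the fact that $h$ is bounded, which provides the uniform $p$-moment bounds with any $p>1$ on $Z^{\alpha}$ and $Z^{\alpha,N,k}$ needed to carry out the weak-convergence argument of Theorem \ref{thm:weak_cvg} in this slightly extended setting. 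Once this technical adaptation is in place, the conclusion $V^N_P \longrightarrow V_P$ is immediate.
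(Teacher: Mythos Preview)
Your proposal is correct and follows exactly the route the paper takes: the paper's proof is the single sentence ``an immediate consequence of Theorem~\ref{thm:weak_cvg}'', after having observed that the reformulation \eqref{eq:V_partial_reform} with augmented state $(X^{\alpha},Z^{\alpha})$ falls into the class \eqref{eq:def_V} with uncontrolled $\widehat{\sigma}_0$. You are in fact more careful than the paper in flagging the non-global-Lipschitz issue for the $Z$-component and the absence of the coercivity term $-C_p\rho(a,a_0)^p$; the paper glosses over both points.
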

 
\begin{remark}
The controlled particles problem $V^N_P$ in \eqref{eq:def_VNP} is a time-consistent problem,
and hence can be solved by the dynamic programming approach, using a time discretization.
When $N$ is large, it is a high-dimensional control problem but one can rely on  machine learning algorithms.
Moreover, one can exploit the fact that the problem is symmetric to improve the algorithm.
We will explore this in a future project.
\end{remark}
\begin{remark}
The   formulation   \eqref{eq:def_VNP} does not fall in the BSDE setting, so that our results do not apply to provide a convergence rate.
We leave this for future researches.
\end{remark}

\appendix

\section{Appendix: Proof of Theorem \ref{thm:weak_cvg}}
\label{sec:proof_weak_cvg}
 
This Appendix is devoted to the proof of   Theorem \ref{thm:weak_cvg}. As a first step, we rewrite the control problems associated to $V$ and $V^N$ on a suitable canonical space, so as to be able to rely on weak compactness arguments.
 
\subsection{The canonical space formulations}
 
Let us denote by $\M$ the space of all positive Borel measures $q$ on $[0,T] \x A$ such that the marginal distribution on $[0,T]$ is the Lebesgue measure, so that one can write $q(\d t, \d a) = q_t(\d a) \d t$, where $(q_t(da))_{t\le T}$ is a Borel measurable kernel from $[0,T]$ to $\Pc(A)$.
Let us further denote by $\M_0 \subset \M$ the subset of all $q \in \M$ such that $q(\d t, \d a) = \delta_{\psi(t)} (\d a)$ for some Borel measurable function $\psi: [0,T] \longrightarrow A$.
 
\paragraph{Canonical space $\Omh$.}
Let us introduce a first canonical space 
$$
\Omh := \Cc^d \x \Cc^d \x \M \x \Cc^d,
$$
which is equipped with the canonical element $\big(\Xh, \Yh,\Lambdah, \Wh\big)$, the Borel $\sigma$--algebra $\widehat{\Fc}:=\Bc(\Omh)$, and the canonical filtration $\Fh:= \big(\widehat{\Fc}_t \big)_{t\le T}$ defined by
\[
\Fch_t
:=
\sigma \Big(\big(\Xh_s, \Yh_s, \Lambdah([0,s] \x D), \Wh_s\big): D \in \Bc(A),\; s \in[0, t] \Big),\; t\in[0,T].
\]
Notice that one can choose a version of the disintegration $\Lambdah(\mathrm{d}t,\mathrm{d}a)=\Lambdah_t(\mathrm{d}a)\mathrm{d}t$
such that $\Lambdah$ is a $\Pc(A)$--valued, $\Fh$--predictable process (see e.g. \cite[Lemma 3.2.]{LackerLimit}).
For all $\varphi \in C^2_b(\R^{d + d})$ and $(t,\xb,\yb,\wb, \nu, a)\in[0,T]\times\Cc^d \times\Cc^d \times\Cc^d\times\Pc(\Cc^d)\times A$,
we define
{ 
\begin{align} \label{eq:conditionnal_generator}
\widehat \Lc_t \varphi \big( \xb, \yb, \wb, \nu, a \big)
:=
\hat b(t, \xb, \nu, a)\cdot D\varphi(\yb_t,\wb_t)
+
\frac{1}{2}\mathrm{Tr}\big[ \hat a(t, \xb, \nu, a) D^2 \varphi(\yb_t,\wb_t)\big],
\end{align}
where
\[
\hat b \big(t,\xb, \nu,a \big)
:=
\begin{pmatrix}
b(t,\xb,\nu,a) \\ 0_d
\end{pmatrix},\;
\hat a \big(t, \xb, \nu, a \big)
:=
\begin{pmatrix}
\sigma(t,\xb,\nu,a) \\
\mathrm{I}_{d}
\end{pmatrix}
\begin{pmatrix}
\sigma(t,\xb,\nu,a) \\
\mathrm{I}_{d}
\end{pmatrix}^{\top}.
\]
}
Then, given a family $(\nu_t)_{t \le T}$ of probability measures in {$\Pc(\Cc^d)$} such that $ [0,T]\ni t \longmapsto \nu_t \in \Pc(\Cc^d)$ is Borel measurable, and $\varphi \in C^2_b(\R^{n + d})$, one can   define 
\begin{equation}\label{eq:Mvarphi}
\widehat{S}^{\varphi, \nu}_t
:=
\varphi \big(\Yh_t, \Wh_t \big)-\varphi(\Yh_0, \Wh_0)
-
{ \int_{[0,t]\x A}} \widehat \Lc_s \varphi\big(\Xh, \Yh, \Wh, \nu_s, a \big) \Lambdah_s(\mathrm{d}a)\mathrm{d}s,\; t\le T,
\end{equation}
where, for a Borel function $\phi:[0,T] \to \R,$ we write $\int_0^{\cdot} \phi(s)\mathrm{d}s:=\int_0^\cdot \phi^{+}(s)\mathrm{d}s-\int_0^\cdot \phi^{-}(s)\mathrm{d}s$ with the convention $\infty - \infty = -\infty.$

\paragraph{Canonical space $\Omb$.}
Next, we introduce a second canonical space  
$$
\Omb := \Cc^d \x \Cc^d \x \M \x \Cc^d \x \Cc^d \x \Pc \big(\Omh \big),
$$
which is equipped with the canonical element $(X, Y, \Lambda, W, B, \muh)$, and the Borel $\sigma$--algebra $\Fcb := \Bc(\Omb)$.
Based on the canonical element $\muh$, we define the processes $(\mu_t)_{t\le T}$, and $(\muh_t)_{t\le T}$ on $\Omb$ by
\begin{equation} \label{eq:muh2mu}
\mu_t:= \muh \circ \big( \Xh_{t \wedge \cdot}\big)^{-1},
\;\mbox{and}\;
\muh_t:=\muh \circ \big( \Xh_{t \wedge \cdot},\Yh_{t \wedge \cdot},\Lambdah^t,\Wh\big)^{-1},\; t\le T,
\end{equation}
in which {$\Lambdah^t  := \Lambda(\cdot \cap [0,t],\cdot) + \delta_{a_0}(\d a) \d s \1_{(t, T]}$}.
Next, we introduce two filtrations $\overline\F:=(\overline\Fc_t)_{t\le T}$ and $\Gb:=(\Gcb_t)_{t\le T}$ on $(\Omb,\Fcb)$, with
\[
\overline \Fc_t
:=
\sigma \Big((X_s, Y_s, \Lambda([0,s] \x D),W_s,B_s, \langle \muh_s, \phi \rangle): D \in \Bc(A), \phi \in C_b(\Cc^d \x \Cc^d \x \M \x \Cc^d),\; s \in[0,t] \Big),
\]
where $C_b(\Cc^d \x \Cc^d \x \M \x \Cc^d)$ denotes the space of bounded and continuous real-valued functions on $\Cc^d \x \Cc^d \x \M \x \Cc^d$, and
\begin{equation} \label{eq:def_Gc}
\Gcb_t
:=
\sigma \Big(\Lambda([0,s] \x D), B_s, \langle \muh_s, \phi \rangle: D \in \Bc(A), \phi \in C_b(\Cc^d \x \Cc^d \x \M \x \Cc^d), s\in[0,t] \Big), \; t\le T.
\end{equation}
Given $\varphi\in C^2_b(\R^{d+d+d +d})$, we set 
\begin{align} \label{eq:first_generator}
\overline \Lc_t \varphi \big( \xb,\yb,\wb,\bb, \nub, a \big)
:=&
\bar b(t, \xb, \nub, a)\cdot D \varphi(\xb_t,\yb_t,\wb_t,\bb_t) \nonumber \\
&+
\frac{1}{2}\mathrm{Tr}\big[ \bar a(t, \xb, \nub, a) D^2 \varphi(\xb_t,\yb_t,\wb_t,\bb_t)\big],
\end{align}
where
\begin{align*}
\bar b \big(t,\xb, \wb,\bb, \nub,a \big)
&:=\begin{pmatrix}
b(t,\xb, \nub, a)\\
b(t,\xb, \nub, a)\\
0_d\\
0_d
\end{pmatrix}, \\
\bar a \big(t,\xb,\wb, \bb, \nub,a \big)
&:=
\begin{pmatrix}
\sigma(t,\xb, \nub, a) & \sigma_0(t,\xb, \nub, a) \\
\sigma(t,\xb, \nub, a) & 0_{n \x d} \\
\mathrm{I}_{d \x d} & 0_{d \x d} \\
0_{d \x d} & \mathrm{I}_{d \x d}
\end{pmatrix}
\begin{pmatrix}
\sigma(t,\xb, \nub, a) & \sigma_0(t,\xb, \nub, a) \\
\sigma(t,\xb, \nub, a) & 0_{n \x d} \\
\mathrm{I}_{d \x d} & 0_{d \x d} \\
0_{d \x d} & \mathrm{I}_{d \x d}
\end{pmatrix}^{\top}.
\end{align*}
for $(t,\xb,\yb, \wb,\bb, \nub,a)\in[0,T]\times\Cc^d \x \Cc^d \x \Cc^d \x \Cc^d\x\Pc(\Cc^d\times A)\x A$, 
which leads to the definition of a continuous $\Fb$--adapted process  
\begin{align} \label{eq:associate-martingale}
\Sb^{\varphi}_t
:=
\varphi(X_t, Y_t, W_t, B_t)
-
\int_{[0,t]\x A} \overline \Lc_s \varphi \big(X_s, Y_s, W_s, B_s, \mub_s, a \big) \Lambda_s(\mathrm{d}a) \mathrm{d}s,\; t\in[0,T].
\end{align}

\begin{definition} \label{def:admissible_ctrl_rule}
Fix $\nu \in \Pc_p(\R^d)$ { for some $p \ge 2$.} A probability $\Pb$ on $(\Omb, \Fcb)$ is a relaxed control rule with initial condition $\nu$ if
\begin{itemize}
\item[$(i)$] $\Pb \big[X_0=Y_0,\; W_0=0,\; B_0=0 \big]=1$, $\Pb \circ X_0^{-1} = \nu$ and  $\E^{\Pb} \big[ \|X\|^p+{\int}_{[0,T]\x A} \big( \rho(a_0, a) \big)^p \Lambda_t(\mathrm{d}a) \mathrm{d}t \big] < \infty;$
 
\item[$(ii)$] The pair $(X_0,W)$ is independent of $\Gcb_T$ under $\Pb$, {and, with $(\Pb^{\Gcb_T}_{\omb})_{\omb \in \Omb}$ being a family of regular conditional probability distributions of $\Pb$ given $\Gcb_T$,}
\begin{equation} \label{eq:muh_property}
\muh_t (\omb)
=
\Pb^{\Gcb_T}_{\omb} \circ (X_{t \wedge \cdot},Y_{t \wedge \cdot}, \Lambda^t, W)^{-1},
\;\mbox{\rm for}\;\Pb\mbox{\rm--a.e.}\;\omb \in \Omb
\end{equation}
for all $t\le T$;
 
\item[$(iii)$] The process $ \Sb^{\varphi}$ is an $(\Fb, \Pb)$--martingale for all $\varphi \in C^2_b \big(\R^d \x \R^d \x \R^d \x \R^{d} \big)$; 
\item[$(iv)$] For $\Pb\;\mbox{\rm--a.e.}\;\omb \in \Omb$,
the process $ \widehat{S}^{\varphi, \mu(\omb)} $
is an $\big(\Fh,\muh(\omb) \big)$--martingale for each $\varphi \in C^2_b(\R^d \x \R^d)$.
\end{itemize}
\end{definition}

{Given ${\nu_0} \in \Pc_p(\R^d)$, one can then define the associated admissible control rules}
\[
\Pcb_R
:=
\big\{ \mbox{All relaxed control rules}\; \Pb \;\mbox{with initial condition}\; {\nu_0} \big\}.
\]
 
\begin{remark}
$\mathrm{((i)}$ Under Assumption \ref{assum:main} and the integrability condition in Definition \ref{def:admissible_ctrl_rule}.$(i)$,
the process $\Sb^{\varphi}$ is $\Pb$-square integrable for $\varphi \in C^2_b \big(\R^d \x \R^d \x \R^d \x \R^d \big)$.
Then, it does not change the definition of the admissible control rule if one changes {\ref{def:admissible_ctrl_rule}}.$(iii)$ to
``$ \Sb^{\varphi} $ is an $(\Fb, \Pb)$--{\bf local martingale} for all $\varphi \in C^2_b \big(\R^d \x \R^d \x \R^d \x \R^d \big)$.''
\vspace{0.5em}
 
\noindent $\mathrm{(ii)}$ Since $\muh_t$ is $\Gcb_t$--measurable, it follows that \eqref{eq:muh_property} is equivalent to
\begin{align} \label{eq:muh_property-details}
{\muh_t (\omb)=\Pb^{\Gcb_t}_{\omb} \circ (X_{t \wedge \cdot},Y_{t \wedge \cdot}, \Lambda^t,W)^{-1}} ,\;\mbox{\rm for}\;\Pb\mbox{\rm--a.e.}\;\omb \in \Omb,
\end{align}
{ with $(\Pb^{\Gcb_t}_{\omb})_{\omb \in \Omb}$ being a family of regular conditional probability distributions of $\Pb$ given $\Gcb_t$,}
\end{remark}

{Let us denote by $\Lc_0[A]$ the set of all Borel functions $\phi: [0,T] \x \Cc^d \longrightarrow A$, and further define}
\begin{equation} \label{eq:redef_VW}
\Pcb_W
:=
\big\{ \Pb \in \Pcb_R :\Pb \big[ \Lambda \in \M_0 \big]=1 \big\},
\end{equation}
and\
\begin{equation} \label{eq:redef_VS}
\Pcb_S
:=
\Big\{
\Pb \in \Pcb_W :\exists\; \phi \in \Lc_0[A],
~\Pb\big[\Lambda_t(\mathrm{d}a)\mathrm{d}t=\delta_{\phi(t, B_{t \wedge \cdot})}(\mathrm{d}a)\mathrm{d}t \big]=1
\Big\}.
\end{equation}
Finally, we define $J(\Pb)$ for $\Pb \in \Omb$ by
\begin{equation} \label{eq:def_VS-canon}
J \big(\Pb\big)
:=
\E^{\Pb} \bigg[\int_{[0,T]\x A} L\big(t, X_{t\wedge\cdot}, \mu_t, a \big) \Lambda_t(\mathrm{d}a) \mathrm{d}t +g\big( X_{T\wedge\cdot}, \mu_T \big) \bigg].
\end{equation}
 
\begin{remark}
Comparing to the canonical space formulation of the McKean-Vlasov optimal control problem in \cite{DjeteApprox}, the main difference is that $\Lambda^t$ is $\Gc_t$-measurable as defined in \eqref{eq:def_Gc}.
Consequently, given $\Pb \in \Pcb_R$, for $\Pb$ \mbox{-a.e.} $\omb \in \Omb$, $\Lambdah$ is $\mu(\omb)$ \mbox{-a.s.~equal to a deterministic measure}.
\end{remark}

For each {$\alpha \in \Ac_0$,} we define
$$
Y^{\alpha}_t
:=
X^\alpha_t-\int_0^t \sigma_0(s, X^\alpha_{s \wedge \cdot}, \mub^\alpha_s,\alpha_s ) \mathrm{d}B_s,\; t\in[0,T],\;
\Lambda^\alpha_t(\mathrm{d}a)\mathrm{d}t
:=
\delta_{\alpha_t}(\mathrm{d}a)\mathrm{d}t,
$$
and
$$
\muh^\alpha
:=
\Lc^{\P} \big( X^\alpha, Y^\alpha,\Lambda^\alpha,W \big| \Gcb_T \big).
$$

{This construction leads to equivalent definitions of $V$ and $V_W$ in terms of control problems set on $(\bar \Omega,\bar \Fc)$.}

\begin{proposition} \label{prop:weak_ctrl_rule} 
{In the context of Theorem \ref{thm:weak_cvg},} 
\begin{equation} \label{eq:def_PcbS}
\Pcb_S
=
\big\{
\P \circ \big(X^\alpha,Y^\alpha,\Lambda^\alpha,W, B, \muh^\alpha \big)^{-1}:\alpha \in \Ac_0
\big\},
\end{equation}
and 
$$
\Pcb_W
=
\big\{
\P^{\gamma} \circ \big(X^\gamma,Y^\gamma,\Lambda^\gamma,W^{\gamma}, B^{\gamma}, \muh^\gamma \big)^{-1}
:\gamma \in \Gamma \big\},
$$
so that {$V$ and $V_W$ can be equivalently formulated on the canonical space as follows:}
$$
V = \sup_{\Pb \in \Pcb_S} J\big(\Pb\big)
~\mbox{and}~
V_W = \sup_{\Pb \in \Pcb_W} J\big(\Pb\big).
$$
\end{proposition}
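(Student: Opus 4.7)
The plan is to establish the two set identities separately and then derive the value function equalities as immediate consequences. For each identity, I would prove the two inclusions by explicit construction and verification of the defining properties of the control rule spaces.

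For the identity $\Pcb_S = \{\P \circ (X^\alpha, Y^\alpha, \Lambda^\alpha, W, B, \muh^\alpha)^{-1} : \alpha \in \Ac_0\}$, I would first tackle the inclusion $\supseteq$. Given $\alpha \in \Ac_0$, I would check each of the four items of Definition \ref{def:admissible_ctrl_rule} together with the strong-control structure \eqref{eq:redef_VS}. Item (i) (initial distribution and integrability) and the fact that $\Lambda^\alpha \in \M_0$ are immediate from $\alpha \in \Ac_0$. For (ii), since $\alpha$ is $\G$-predictable it is a measurable functional of $B$, so $(X_0, W)=(\xi, W)$ is $\P$-independent of $\Gcb_T$ by the construction of $\P$; the identification of $\muh^\alpha$ as a regular conditional distribution given $\Gcb_T$ holds by its very definition. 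Item (iii) follows by applying Itô's formula to $\varphi(X^\alpha_t, Y^\alpha_t, W_t, B_t)$ and noting that the resulting local martingale is square-integrable under Assumption \ref{assum:main}. For (iv) one conditions on $B$: since $Y^\alpha$ was designed to subtract precisely the $dB$-integral, the conditional dynamics of $(X^\alpha, Y^\alpha)$ under $\muh^\alpha(\omb)$ no longer involve $B$, and Itô's formula applied to $\varphi(Y^\alpha, W)$ yields the required conditional martingale $\widehat S^{\varphi, \mu(\omb)}$. The reverse inclusion uses the strong-control structure: given $\Pb \in \Pcb_S$, pick $\phi \in \Lc_0[A]$ with $\Lambda_t(\d a)\d t = \delta_{\phi(t,B_{t\wedge\cdot})}(\d a)\d t$ and set $\alpha_t := \phi(t, B_{t \wedge \cdot})$; pathwise existence and uniqueness for \eqref{eq:def_X_alpha} then identify the law of the canonical tuple under $\Pb$ with that of $(X^\alpha, Y^\alpha, \Lambda^\alpha, W, B, \muh^\alpha)$ under $\P$.

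For the $\Pcb_W$ identity, the strategy is parallel. For $\supseteq$, given $\gamma \in \Gamma$, the $\G^\gamma$-predictability of $\alpha^\gamma$ gives directly $\Lambda^\gamma \in \M_0$, while item (ii) of Definition \ref{def:admissible_ctrl_rule} is the conditional-independence property built into axiom 3 of the weak control term. Conversely, given $\Pb \in \Pcb_W$, one extracts a Borel function $\phi$ realizing $\Lambda \in \M_0$ (now a functional of the entire canonical path, not of $B$ alone) and assembles the term $\gamma = (\Omb, \Fcb, \Pb, \Fb, \Gb, X, W, B, \mu, \alpha)$ with $\alpha$ the associated control; the axioms 1--4 of \eqref{eq:weak_ctrl_gamma} then translate directly from Definition \ref{def:admissible_ctrl_rule}. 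The value function identities then follow from the fact that $J(\Pb)$ reduces to $J(\alpha)$ when $\Pb$ is induced by a strong control, and to $\E^{\P^\gamma}[\int_0^T L(t,\mu^\gamma_t,\alpha^\gamma_t)\d t + g(\mu^\gamma_T)]$ when it is induced by a weak control, using the tautological identity $\mu_t(\omb) = \muh(\omb)\circ \Xh_{t\wedge\cdot}^{-1}$.

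The main obstacle will be the verification of item (iv) of Definition \ref{def:admissible_ctrl_rule}, which encodes a delicate conditional martingale property on the hidden canonical space $\Omh$: it requires disintegrating along $\Gcb_T$ and showing that, conditionally on $B$, the dynamics of $Y^\alpha$ are driven only by $W$. This is precisely where one exploits the conditional independence of $(\xi, W)$ from $\Gcb_T$, following the strategy of \cite{DjeteApprox}, with the adaptation that here $\alpha$ is $\G$-adapted rather than $\F$-adapted, which makes $\Lambdah$ deterministic (not just $\Fh$-predictable) under $\muh^\alpha(\omb)$ for $\Pb$-a.e.~$\omb$, hence simplifying the verification on $\Omh$ compared to the classical case.
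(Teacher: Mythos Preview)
Your proposal is correct and matches the approach the paper itself points to: the paper omits the proof entirely, stating that it is ``almost the same as the one of \cite[Proposition 2]{DjeteApprox},'' and your outline is precisely a faithful execution of that argument adapted to the present setting where $\alpha$ is $\G$-predictable. You have also correctly identified the only nontrivial point---the conditional martingale verification for item (iv)---and the simplification it enjoys here, namely that $\Lambdah$ is $\muh(\omb)$-a.s.\ deterministic.
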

The proof is almost the same as the one of \cite[Proposition 2]{DjeteApprox}, and is omitted.
 
\vspace{0.5em}

We finally introduce the relaxed formulation of the control problem: 
\begin{equation} \label{eq:defVR}
	V_R := \sup_{\Pb \in \Pcb_R} J\big(\Pb\big).
\end{equation}

\subsection{Approximation results and proof of Theorem \ref{thm:weak_cvg}}

{In order to be able to apply standard propagation of chaos results for uncontrolled SDEs, we rely on piecewise constant controls, that serve as approximations of general controls.  We therefore introduce the space $\Ac_0^{\circ}$ of all elements $\alpha \in \Ac_0$ such that   $\alpha = \alpha_{t_i}$ on $ [t_i, t_{i+1})$, $i=0, \ldots, m-1$, for some partition $0 = t_0 < \ldots < t_m = T$. The corresponding set of controls is }
\begin{equation} \label{eq:PcbS0}
\Pcb^{\circ}_S
:=
\big\{
\P \circ \big(X^\alpha,Y^\alpha,\Lambda^\alpha,W, B, \muh^\alpha \big)^{-1}:\alpha \in \Ac^{\circ}_0
\big\}.
\end{equation}

{The first key results consists in showing that $\Pcb^{\circ}_S\subset \Pcb_S$ is dense in $\Pcb_W$, which in turn is dense in $\Pcb_R$. In particular, (i) below and Proposition \ref{prop:weak_ctrl_rule}  imply that $V=V_W$, the first assertion of Theorem \ref{thm:weak_cvg}.}

\begin{proposition} \label{prop:density_Pcb}
$\mathrm{(i)}$ Let Assumption \ref{assum:main} hold true and $\Pc(\Omb)$ be equipped with the $\Wc_2$ topology.
Then, the set $\Pcb^{\circ}_S$ is dense in $\Pcb_S$, and $\Pcb_S$ is dense in $\Pcb_W$. 

\vspace{0.5em}

\noindent $\mathrm{(ii)}$ Assume in addition that $\sigma_0$  is not controlled (i.e. $\sigma_0(t, \xb, \nu, a)$ is independent of $a$), $\nu_0 \in \Pc_p(\R^d)$ for some $p > 2$.
Then, the set $\Pcb_W$ is dense in $\Pcb_R$. 
Consequently,
{$$
	V = V_W = V_R.
$$
}
\end{proposition}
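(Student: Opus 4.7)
My plan is to follow the three-step approximation chain from \cite{DjeteApprox}, adapted to the present common-noise-adaptedness constraint.

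\textbf{Step 1 ($\Pcb^{\circ}_S$ dense in $\Pcb_S$):} Given $\alpha \in \Ac_0$, I would approximate it in $L^2(\d t \otimes \d \P; A)$ by $\G$-predictable, piecewise constant controls $\alpha^n$, built via conditional expectations along dyadic partitions composed with a measurable selection into the Polish space $A$. The Lipschitz and growth assumptions of Assumption \ref{assum:main}.(i) provide $L^2$-stability of the McKean--Vlasov SDE \eqref{eq:def_X_alpha}, which transfers the $L^2$-convergence of $\alpha^n \to \alpha$ into $\Wc_2$-convergence of the joint laws $\P \circ (X^{\alpha^n}, Y^{\alpha^n}, \Lambda^{\alpha^n}, W, B, \muh^{\alpha^n})^{-1}$ in $\Pc(\Omb)$.

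\textbf{Step 2 ($\Pcb_S$ dense in $\Pcb_W$):} For $\Pb \in \Pcb_W$ with $\Lambda_t = \delta_{\psi(t)}$, the $\G$-adapted function $\psi$ may depend on the extra randomness recorded in $\muh$ beyond $B$. My plan is to first discretize $\psi$ in time, then on each cell replace the extra randomness by a Borel function of $B_{t \wedge \cdot}$ via a Skorokhod-type coupling; the conditional independence of $(X_0, W)$ from $\Gcb_T$ guaranteed by Definition \ref{def:admissible_ctrl_rule}.(ii) is what makes such a replacement measurable and compatible with the SDE structure. The resulting $\phi^n \in \Lc_0[A]$ approximates $\psi$ in $L^2(\d t \otimes \d \Pb; A)$, and SDE stability again yields $\Wc_2$-convergence of the associated control rules.

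\textbf{Step 3 ($\Pcb_W$ dense in $\Pcb_R$):} For (ii), I would apply a chattering / mimicking argument. Given $\Pb \in \Pcb_R$ with relaxed kernel $q_t(\d a)$, on each cell of a fine time partition I would select a $\G$-predictable Dirac-valued control $\psi^n$ whose empirical cell-averages approximate $q_t$ in the sense of test-function integrals. The hypothesis that $\sigma_0$ does not depend on $a$ is essential here: the common-noise diffusion coefficient is unaltered by the relaxation, so matching the conditional law $\muh$ reduces to mimicking the drift and $W$-diffusion on small cells, a standard operation. The reinforced moment bound $\nu_0 \in \Pc_p(\R^d)$ with $p > 2$ and the $\Wc_p$-convergence of the $N^{-1}\sum \nu_k$ supply the uniform integrability needed for $\Wc_2$-tightness and for passing to the limit in the running cost under the growth bounds \eqref{eq:L_growth}.

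The main obstacle will be Step 3: the mimicking construction must preserve simultaneously the $\G$-predictability of $\Lambda$ and the fixed-point identity \eqref{eq:muh_property} defining $\muh$; the uncontrolled-$\sigma_0$ assumption is precisely what permits this, since otherwise the common-noise dispersion of the approximating process would depend on $q_t$ and could not be matched by its Dirac substitute. Once the three densities are in hand, continuity of $J$ along the constructed sequences—which follows from the lower semi-continuity and growth conditions in Assumption \ref{assum:main}.(ii) together with uniform moment estimates—combined with Proposition \ref{prop:weak_ctrl_rule} and the trivial inclusions $\Pcb^{\circ}_S \subset \Pcb_S \subset \Pcb_W \subset \Pcb_R$, yields $V = V_W = V_R$.
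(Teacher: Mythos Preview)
Your three-step plan mirrors the paper's structure, which likewise adapts \cite{DjeteApprox} to the common-noise-adapted setting. Two places where your sketch is vague but the paper is concrete deserve mention. In Step~2, the ``Skorokhod-type coupling'' is realised by freezing $B$ on a short initial interval $[0,t^m_1]$ and using the now-independent increments $\Delta B^{m,\eps}_i$ of $B$ on that interval as the auxiliary randomisation source; this lets the piecewise-constant control---initially a function of $(B^m,\Lambda,\muh)$---be rewritten, via the law-equivalence \eqref{eq:law_equiv}, as a measurable function of $B$ alone, hence an element of $\Ac_0^{\circ}$. In Step~3, the reason the chattering output remains $\G$-predictable is that the built-in constraint $\Lambda^t \in \Gcb_t$ forces $\Lambdah$ to be $\muh(\omb)$-a.s.\ equal to a deterministic measure on the inner space $\Omh$; consequently the projection argument of \cite[Lemma~4]{DjeteApprox} produces a \emph{deterministic} sequence $\widehat\alpha^m$ rather than an $\Fh$-adapted one, and the reassembled control on $\Omb$ is automatically $\G$-adapted. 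These two observations are essentially the full content of the adaptation from \cite{DjeteApprox}; your plan correctly locates the obstacles but does not yet name the devices that resolve them.
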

\begin{proof}
$\mathrm{(i)}$ { For the statement in   $\mathrm{(i)}$ of the Proposition, it is enough to prove that $\Pcb^{\circ}_S$ is dense in $\Pcb_W$, for which} 
we follow the main steps of \cite[Section 4.2.1]{DjeteApprox}.
However, since the control process is required to be adapted to the common noise filtration, the approximation and randomization procedures are slightly  different.
We explain here how the proof  of  \cite[Section 4.2.1]{DjeteApprox} should be modified.

\vspace{0.5em}

First, for each $m \ge 2$, let us define a partition $(t^m_i)_{0 \le i \le m}$ of $[0,T]$ by $t^m_i := iT/m$,
and then define $\eps_m := T/m$, $t^{\eps, m}_i := i \eps_m/m \in [0,t^m_1]$.
On the canonical space $\Omb$, we define $B^m $ by
$$
B^m_{s} =0, ~~\mbox{for}~s \in [0, t^m_1],
$$
and
$$
B^m_s = B_s - B_{t^m_1}, ~~\mbox{for}~s \in [t^m_1, T].
$$
In contrast to \cite[Section 4.2.1]{DjeteApprox} where one freezes both $B$ and $W$ on $[0, t^m_1]$,
here we only freeze $B$ on $[0, t^m_1]$.
\vspace{0.5em}
 
Let us further define two filtrations $\Fb^m = (\Fcb^m_t)_{t\le T}$ and $\Gb^m = (\Gcb^m_t)_{t\le T}$ by
$$
\Fcb^m_t := \sigma \big( X_{t \wedge \cdot}, Y_{t, \wedge \cdot}, W_{t \wedge \cdot}, B^m_{t \wedge \cdot}, \Lambda^t, \muh_t \big),
~~
\Gcb^m_t := \sigma \big( B^m_{t \wedge \cdot}, \Lambda^t, \muh_t \big), \;t\le T.
$$
Let $\Pb \in \Pcb_W$ be a fixed weak control measure.
Following \cite[Lemma 2]{DjeteApprox}, one can construct a sequence $(X^m, B^m, \muh^m, \alpha^m)_{m \ge 1}$ on $(\Omb, \Fcb, \Pb)$ satisfying the following properties:
 
\vspace{0.5em}
 
\noindent $\mathrm{(1)}$
The process $\alpha^m$ is $\G$-predictable and piecewise constant in the sense that $\alpha^m_s = \alpha^m_{t^m_i}$ for all $s \in [t^m_i, t^m_{i+1})$, $i = 1, \ldots, m-1$, {and}  $\alpha^m_s \equiv a_0$ for $s \in [0, \eps_m] = [0, t^m_1)$.
The process $X^m$ is a $\F$-adapted continuous process such that
\begin{equation} \label{eq:approx_weak}
\lim_{m \to \infty} \E^{\Pb} \Big[ \int_0^T \rho(\alpha^m_t, \alpha_t)^2 \d t + \| X- X^m \|^2 \Big] = 0.
\end{equation}
\noindent $\mathrm{(2)}$ The process $X^m$ is the unique solution, under $\Pb$,  to the SDE
$$
X^m = X_0 + \int_0^\cdot b \big(r, X^m_{r \wedge \cdot}, \mu^m_r, \alpha^m_r \big) \d r
+ \int_0^\cdot \sigma \big(r, X^m_{r \wedge \cdot}, \mu^m_r, \alpha^m_r \big) \d W_r
+ \int_0^\cdot \sigma_0 \big(r, X^m_{r \wedge \cdot}, \mu^m_r, \alpha^m_r \big) \d B^m_r,
$$
with $\E \big[ \|X^{{m}}\|^2 \big] < \infty$ and $\mu^m_t = \Lc(X^m_t | \Gc^m_t)$ for $t\le T$.
\vspace{0.5em}
 
\noindent $\mathrm{(3)}$ Given $\Lambda^m_t (\d a) \d t := \delta_{\alpha^m_t}(\d a) \d t$,
$$
Y^m_t := X^m_t - \int_0^t \sigma_0 \big(r, X^m_{r \wedge \cdot}, \mu^m_r, \alpha^m_r \big) \d B^m_r,
~~
\muh^m_t := \Lc^{\Pb} \big( X^m_{t \wedge \cdot}, Y^m_{t \wedge \cdot}, (\Lambda^m)^t, W^m \big| \Gc^m_t \big),
~t\le T,
$$
then it holds that, for any $t\le T$,
$$
\muh^m_t = \muh^m_T \circ \big(\Xh_{t \wedge \cdot}, \Yh_{t \wedge \cdot}, \Lambdah^t, \Wh \big)^{-1}
~~\mbox{and}~~
\muh^m_t := \Lc^{\Pb} \big( X^m_{t \wedge \cdot}, Y^m_{t \wedge \cdot}, (\Lambda^m)^t, W^m \big| B^m, \Lambda^m, \muh^m \big).
$$
Next, we consider $(W^m_k {=(W^m_{k,t})_{t \le T}}, B^m_k {=(B^m_{k,t})_{t \le T}}, \alpha^m_k, \muh^m_k)_{k = 1, \ldots, m}$, 
where for each $k= 1, \ldots, m$ {and for all $t \le T$},
$$
W^m_k := W_{(t^m_{k-1} \vee t) \wedge t^m_k} - W_{t^m_{k-1}},
~~~
B^m_k := B^m_{(t^m_{k-1} \vee t) \wedge t^m_k} - B^m_{t^m_{k-1}},
~~~
\alpha^m_k := \alpha^m_{t^m_k},
~~~
\muh^m_k := \muh^m_{t^m_{k}}.
$$
Recall also that $\eps_m {=} T/m$ and $t^{\eps, m}_i {=} i \eps_m/m$, 
so that $\Delta B^{m,\eps}_i := B_{t^{m,\eps}_i} - B_{t^{m,\eps}_{i-1}}$ depends only on the increment of $B$ on $[t^{m,\eps}_{i-1}, t^{m,\eps}_i] \subset [0, t^m_1]$,
$B^m_1 \equiv 0$ and for $k\ge 2$, $B^m_k$ depends only on the increment of $B$ on $[t^m_{k-1}, t^m_k] \subset [t_1, T]$.
Therefore, $\Delta B^{m,\eps}_i$
is independent of $(W^m_k, B^m_k, \alpha^m_k, \muh^m_k)_{k = 1, \ldots, m}$. 
Using similar arguments as in the proof of \cite[Lemma 3]{DjeteApprox} (or \cite[Section 4]{ElKarouiTan} for a simpler context),
there exists functions $(G^m_{k,1}, G^m_{k,2})$ such that,
with
$$
\big( \gamma^m_k, \nuh^m_k \big) :=
\big( G^m_{k,1}, G^m_{k,2} \big) \big( (B^m_i)_{i=1}^k, (\alpha^m_i)_{i=1}^{k-1}, (\muh^m_i)_{i=1}^{k-1}, ~\Delta B^{m,\eps}_k \big),
~~k=1, \ldots, m,
$$
one has
\begin{equation} \label{eq:law_equiv}
\Pb \circ \big( \big(W^m_k, B^m_k, \alpha^m_k, \muh^m_k \big)_{k=1, \ldots, m} \big)^{-1}
~=~
\Pb \circ \big( \big(W^m_{{k}}, B^m_k, \gamma^m_k, \nuh^m_k \big)_{k=1, \ldots, m} \big)^{-1}.
\end{equation}
Let us define $(\Xt^m, \Yt^m)$ by
$$
\Xt^m_t
= X_0
+ \int_0^t \!\!\! b \big(r, \Xt^m_{r \wedge \cdot}, \nu^m_r, \gamma^m_r \big) \d r
+ \int_0^t \!\!\! \sigma \big(r, \Xt^m_{r \wedge \cdot}, \nu^m_r, \gamma^m_r \big) \d W_r
+ \int_0^t \!\!\! \sigma_0 \big(r, \Xt^m_{r \wedge \cdot}, \nu^m_r, \gamma^m_r \big) \d B^m_r,
$$
and
$$
\Yt^m_t := {\Xt^m_t}- \int_0^t \!\!\! \sigma_0 \big(r, \Xt^m_{r \wedge \cdot}, \nu^m_r, \gamma^m_r \big) \d B^m_r,
$$
so that it is easy to check that $\gamma^m$ is $\F^B$-adapted, and
$$
\nuh^m_t = \Lc\big( \big( \Xt^m, \Yt^m, \Lambda^{\gamma^m,t}, W \big) \big| B \big).
$$
Finally, we define $(\Xt^{m'}, \Yt^{m'}, \muh^{m'})$ by
$$
\Xt^{m'}_t
= X_0
+ \int_0^t \!\!\! b \big(r, \Xt^{m'}_{r \wedge \cdot}, \mu^{m'}_r, \gamma^{m}_r \big) \d r
+ \int_0^t \!\!\! \sigma \big(r, \Xt^{m'}_{r \wedge \cdot}, \mu^{m'}_r, \gamma^{m}_r \big) \d W_r
+ \int_0^t \!\!\! \sigma_0 \big(r, \Xt^{m'}_{r \wedge \cdot}, \mu^{m'}_r, \gamma^{m}_r \big) \d B_r,
$$
with {$\mu^{m'}_t = \Lc(X_{t \wedge \cdot} | B)$}, and
$$
\Yt^{m'}_t :={\Xt^{m'}_t}- \int_0^t \!\!\! \sigma_0 \big(r, \Xt^{m'}_{r \wedge \cdot}, \mu^{m'}_r, \gamma^{m'}_r \big) \d B^m_r,
$$
and
$$
{ \muh^{m'}_t = \Lc\big( \big( \Xt^{m'}_{t \wedge \cdot}, \Yt^{m'}_{t \wedge \cdot}, \Lambda^{\gamma^m,t}, W \big) \big| B \big).}
$$
As in \cite[Lemma 2]{DjeteApprox}, one can conclude that
$$
\P \circ \big( \Xt^{m'}, \Yt^{m'}, \Lambda^{\gamma^m}, W, B, \muh^{m'} \big)
~\in~
\Pc^{\circ}_S,
$$
and that
$$
\lim_{m \to \infty} \E^{\Pb}
\Big[
  \| \Xt^m - \Xt^{m'}\|^2
+
\Wc_2( \nuh^m, \muh^{m'})^2
\Big]
= 0.
$$
Combining with \eqref{eq:approx_weak} and \eqref{eq:law_equiv}, this proves that {$\Pcb^{\circ}_S$ is dense in $\Pcb_W$.}

\vspace{0.5em}

\noindent $\mathrm{(ii)}$ To prove that $\Pcb_W$ is dense in $\Pcb_R$, one can apply exactly the same proofs in \cite[Section 4.2.2]{DjeteApprox}\footnote{ In the published version, the coefficient function $\sigma_0$ is assumed to be a constant, while in a longer version with detailed proofs on arXiv or in the PhD thesis \cite{DjeteThese}, the results hold still when the coefficient function $\sigma_0 (t, \xb, \nu, a)$ could be a function of $(t, \xb, \nu)$ but independent of $a$ as in the statement of Proposition \ref{prop:density_Pcb}.}.
In our context, as $\Lambda^t$ is adapted to the common noise filtration $\G$,
it follows that, for any $\Pb \in \Pcb_R$, for $\Pb$-a.e. $\omb \in \Omb$,
on the space $(\Omh, \Fch, { \muh(\omb)})$ 
the control measure $\Lambdah$ is equal to a deterministic measure a.s.
As a result, the projection arguments in \cite[Lemma 4]{DjeteApprox} leads to a sequence of deterministic process $\big( \widehat \alpha^m \big)_{m \ge 1}$ such that
$$
\delta_{ \widehat \alpha^m (\d a)} \d t
~\longrightarrow~
\Lambdah (\d a, \d t),
~\muh(\omb)\mbox{-a.s.}
$$
Consequently, in the  approximation sequence $(\Pb_m)_{m \ge 1}$ in \cite[Proposition 7]{DjeteApprox},
each $\Pb_m$ is a weak control in the sense of \cite{DjeteApprox} but with controls adapted to the common noise filtration $\G$,
and hence it is a weak control measure in the sense of   Section \ref{subsec:weak_cvg} and \eqref{eq:redef_VW}.
Then, one concludes the proof by \cite[Proposition 7]{DjeteApprox}.
\end{proof}

 We can now conclude the proof of  Theorem \ref{thm:weak_cvg} by showing that its second assertion holds true.
 
For any strong control process $\alpha \in \Ac_N$, let $(X^{\alpha, k})_{k=1, \ldots, N}$ be the unique strong solution to the SDE \eqref{eq:def_Xn},
we next define
$$
Y^{\alpha,k}_{\cdot}:=X^{\alpha,k}_{\cdot}-\int_0^\cdot \sigma_0 (s,X^{\alpha,k},\varphi^N_s,\alpha_s)\mathrm{d}B_s,
~~~
\mu^{X^N}_t
:=
\frac{1}{N} \sum_{k=1}^N \delta_{\big(X^{\alpha,k}_{t \wedge \cdot},Y^{\alpha,k}_{t \wedge \cdot},\delta_{\alpha_t}(\mathrm{d}a)\mathrm{d}t, W^k \big)},
$$
and then
\begin{equation} \label{eq:def_PN}
\Pb^{N, \alpha}
:=
\frac{1}{N} \sum_{k=1}^N \P^N \circ
\big(X^{\alpha,k},Y^{\alpha,k}, \delta_{\alpha_t}(\mathrm{d}a)\mathrm{d}t,W^k,B, \mu^{X^N} \big)^{-1}
\in
\Pc(\Omb).
\end{equation}

\begin{proposition}
{
Let Assumption \ref{assum:main} hold true. Suppose in addition that the coefficient function $\sigma_0$  is not controlled (i.e. $\sigma_0(\cdot, a)$ is independent of $a$), and, for some $p > 2$, $\nu_k \in \Pc_p(\R^d)$ for all $k \ge 0$ and $\Wc_p \big(N^{-1}\sum_{k=1}^{N} \nu^k , \nu \big)\underset{N\to\infty}{\longrightarrow}0$.

\vspace{0.5em}
 
\noindent $\mathrm{(i)}$ Let $\big(\Pb^N\big)_{N \ge 1}$ be given by $\Pb^N := \Pb^{N, \alpha_N}$,
where
$\alpha_N \in \Ac_N$ satisfies
\begin{equation} \label{eq:eps_optimal_ctrl}
J_N(\alpha) \ge V_S^N - \varepsilon_N,
\;\mbox{\rm for all}\;N \ge 1,
\end{equation}
for a sequence $(\varepsilon_N)_{N \ge 1} \subset \R_+$ satisfying $\lim_{N\to\infty} \varepsilon_N=0$.
Then, the sequence $(\Pb^N)_{N \ge 1}$ is relatively compact under $\Wc_2$,
and for any converging subsequence $\big(\Pb^{N_m}\big)_{m \ge 1}$, one has
$$
\lim_{m \to \infty} \Wc_2 \big( \Pb^{N_{m}}, \Pb^\infty \big)=0,
\;\mbox{\rm for some}
\;\Pb^\infty \in \Pcb_R.
$$
\noindent $\mathrm{(ii)}$
For any $\Pb \in \Pcb_R$,
one can construct a sequence $(\Pb^N)_{N \ge 1}$ such that 
$\Pb^N := \P^{N,\alpha_N}$ for  some $\alpha_N \in \Ac_N$, for each $N\ge 1$, and  $\Wc_2 \big( \Pb^{N}, \Pb \big) \underset{N\to\infty}{\longrightarrow} 0$.

\vspace{0.5em}

\noindent $\mathrm{(iii)}$ Consequently,  
$$
\lim_{N \to \infty} V^N
~=~
V.
$$
}
\end{proposition}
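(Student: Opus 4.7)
The plan is to establish the three claims in sequence using weak convergence on the canonical space $\Omb$ together with the density results of Proposition \ref{prop:density_Pcb}.

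\emph{Part $(\mathrm{i})$.} I would first derive uniform moment bounds. Using the $\varepsilon_N$-optimality \eqref{eq:eps_optimal_ctrl} and comparing $J_N(\alpha_N)$ with $J_N(a_0)$ (taking the constant control $a_0$), the coercive term $-C_p\rho(a,a_0)^p$ with $p>2$ in Assumption \ref{assum:main}$(\mathrm{ii})$ yields
$$
\sup_{N\ge 1} \E^{\P^N}\!\Big[\int_0^T \rho(\alpha^N_t, a_0)^p \d t\Big] < \infty.
$$
Combined with Assumption \ref{assum:main}$(\mathrm{i})$, classical SDE estimates then give $\sup_N \max_{k\le N} \E^{\P^N}[\|X^{N,k}\|^p]<\infty$. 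Tightness of $(\Pb^N)$ under $\Wc_2$ on $\Omb$ follows by verifying tightness of each marginal: the continuous components $X,Y,W,B$ via Kolmogorov's criterion; the relaxed control $\Lambda$ via the $L^p$ bound (providing uniform integrability since $p>2$); and the empirical measure marginal via exchangeability combined with tightness of each agent's law. For a subsequential weak limit $\Pb^\infty$, the four conditions in Definition \ref{def:admissible_ctrl_rule} are then checked by the standard recipe: the initial distribution by the assumption $\Wc_p(N^{-1}\sum_k\nu_k,\nu_0)\to 0$; the martingale property of $\Sb^\varphi$ by passing to the limit in \eqref{eq:associate-martingale} using Lipschitz continuity of $(b,\sigma,\sigma_0)$, the uncontrolled character of $\sigma_0$ being crucial so that the second-order generator remains continuous under weak limits; and the conditional-law property \eqref{eq:muh_property} via an exchangeability argument, since all $N$ particles share the same control $\alpha_N$.

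\emph{Part $(\mathrm{ii})$.} Given $\Pb\in\Pcb_R$, I would use Proposition \ref{prop:density_Pcb} to select $(\Pb^{(m)})_{m\ge 1}\subset \Pcb^\circ_S$ with $\Wc_2(\Pb^{(m)},\Pb)\to 0$. Each $\Pb^{(m)}$ arises from a piecewise-constant strong control of the form $\alpha^{(m)}=\phi^{(m)}(B)$, a measurable functional of the common-noise path alone. Applying the same $\alpha^{(m)}$ to all $N$ agents, the particle dynamics become a conditionally i.i.d.~\emph{uncontrolled} McKean-Vlasov system given $B$, and a Sznitman-type propagation-of-chaos argument (carried out conditionally on $B$), together with the convergence of initial empirical laws, yields $\Wc_2(\Pb^{N,\alpha^{(m)}},\Pb^{(m)})\to 0$ as $N\to\infty$. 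A diagonal extraction in $(N,m)$ produces the required approximating sequence inside $\{\Pb^{N,\alpha}:\alpha\in\Ac_N\}$.

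\emph{Part $(\mathrm{iii})$.} The growth conditions in Assumption \ref{assum:main}$(\mathrm{ii})$ together with the uniform $p$-th moment bound from (i) ensure continuity of $\Pb\mapsto J(\Pb)$ on the tight families under consideration (via uniform integrability). Along the subsequence of (i),
$$
V^{N_m} \le J_{N_m}(\alpha_{N_m}) + \varepsilon_{N_m} = J(\Pb^{N_m}) + \varepsilon_{N_m} \longrightarrow J(\Pb^\infty) \le V_R.
$$
Conversely, fixing any $\Pb\in\Pcb_R$, (ii) provides $(\alpha_N)\subset\Ac_N$ with $\Pb^{N,\alpha_N}\to\Pb$, whence $\liminf_N V^N \ge \liminf_N J_N(\alpha_N) = J(\Pb)$; taking the supremum over $\Pb$ gives $\liminf_N V^N\ge V_R$. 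Since Proposition \ref{prop:density_Pcb}$(\mathrm{ii})$ asserts $V_R=V$, the chain of inequalities collapses to $\lim_N V^N = V$.

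\emph{Main obstacle.} The most delicate step is the limit identification of $\muh$ as the conditional distribution given $\Gcb_T$ when the controls $\alpha_N$ may depend on the private noises $(W^1,\ldots,W^N)$ as well as on $B$. One must exploit the exchangeability induced by all particles sharing the same control to show that the private noises average out in the limit, so that the limiting control becomes effectively $\Gcb$-adapted and the empirical measure $\mu^{X^N}$ converges to a genuine $\Gcb$-conditional law. The argument parallels \cite{DjeteApprox} but needs to be adapted to enforce the stronger common-noise measurability constraint inherent to our problem.
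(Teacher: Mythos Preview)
Your proposal is correct and follows essentially the same route as the paper's proof: tightness via the coercivity in Assumption \ref{assum:main}$(\mathrm{ii})$ and SDE moment estimates, identification of the limit in $\Pcb_R$ by passing to the limit in the martingale problem and in the conditional-law identity \eqref{eq:muh_property} (the paper carries out precisely the exchangeability computation you anticipate), then the density result of Proposition \ref{prop:density_Pcb} combined with Sznitman-type propagation of chaos and a diagonal extraction for $(\mathrm{ii})$, and finally the sandwich argument for $(\mathrm{iii})$. One small technical point the paper makes explicit in $(\mathrm{ii})$ that you omit: after approximating by piecewise-constant controls in $\Pcb^{\circ}_S$, one further approximates so that each $\alpha^m_{t^m_i}$ is a \emph{continuous} functional of $B_{t^m_i \wedge \cdot}$, which is what makes the conditional propagation-of-chaos step go through cleanly.
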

\begin{proof}
$\mathrm{(i)}$ First, we can apply \cite[Proposition 10]{DjeteApprox} to deduce that $(\Pb^N)_{N \ge 1}$ is relatively compact w.r.t. $\Wc_2$.
Up to considering a subsequence, one can us assume that $\Pb^N \longrightarrow \Pb^{\infty}$ under $\Wc_2$, { for some measure $\Pb^{\infty}$ and we need to prove that $\Pb^{\infty} \in \Pcb_R$.}
\vspace{0.5em}
 
For this, one can follow the arguments in \cite[Section 4.3.2.2]{DjeteApprox}, which induces that
$$
\Pb^{\infty} \big[ \muh \circ (\Xh_0)^{-1} = \nu_0, X_0 - Y_0 = W_0 = B_0 = 0 \big] = 1,
~~
\E^{\Pb^{\infty}} \Big[ \big\| X \big\|^2 + \int_{[0,T]\x A} \rho(a, a_0)^2 \Lambda( \d a, \d t) \Big] < \infty.
$$
Now, we slightly adapt the arguments in \cite[Section 4.3.2.2]{DjeteApprox},
and observe that for any $\phi \in C_b(\Cc^d \x \Cc^d \x \M \x \Cc^d)$ and $\psi \in C_b(\Cc^d \x \M \x \x \Pc(\Omh))$
\begin{align*}
&\E^{\Pb^{\infty}}
\big[ \phi \big(X_{t \wedge \cdot}, Y_{t \wedge \cdot}, \Lambda^t,W \big) \psi \big(B, \Lambda, \muh \big) \big]
\\
=&
\lim_{N\to\infty} \E^{\Pb^N}
\big[ \phi \big(X_{t \wedge \cdot}, Y_{t \wedge \cdot}, \Lambda^t,W \big) \psi \big(B, \Lambda, \muh \big) \big] \\
=&
\lim_{N\to\infty} \frac{1}{N} \sum_{k=1}^N \E^{\P^N}
\Big[ \phi \big(X^k_{t \wedge \cdot}, Y^k_{t \wedge \cdot}, (\delta_{\alpha^{N}_s}(\d a) \d s)^t,W^k \big) \psi \Big(B, \delta_{\alpha^{N}_s}(\d a) \d s, {\mu^{X^N}_T} \Big) \Big]
\\
=&
\lim_{N\to\infty} \E^{\P^N}
\Big[ \E^{{\mu^{X^N}_T}} \big[ \phi \big(\Xh_{t \wedge \cdot}, \Yh_{t \wedge \cdot}, \Lambdah^t,\Wh \big) \big] \psi \Big(B, \delta_{\alpha^{N}_s}(\d a) \d s, {\mu^{X^N}_T} \Big) \Big]\\
=&
\E^{\Pb^{\infty}}
\Big[
\E^{{\hat \mu}}\big[ \phi \big( \Xh_{t \wedge \cdot}, \Yh_{t \wedge \cdot},\Lambdah^t, \Wh \big) \big]
\psi \big(B, \Lambda, \muh \big)
\Big],
\end{align*}
which implies that
$$
\muh_t(\omb) = \Lc^{\Pb}( X_{t \wedge \cdot}, Y_{t, \wedge \cdot}, \Lambda^t, W \big| \Gc_T \big) (\omb),
~\mbox{for}~
{\Pb^{\infty}}\mbox{-a.e.}~\omb \in \Omb,\; t\le T.
$$
In particular, the fact that $\Pb^{\infty}\big[ \muh \circ (\Xh_0)^{-1} = \nu_0\big] = 1$ implies that $\Lc^{\Pb^{\infty}}(X_0 | \Gc_T) = \nu_0$, $\Pb^{\infty}$-a.s. and hence $\Pb \circ X_0^{-1} = \nu_0$.

\vspace{0.5em}

Given the above, the rest of the argument to prove that $\Pb^{\infty}$ {belongs to $\Pcb_R(\nu)$} is exactly the same as in \cite[Section 4.3.2.2]{DjeteApprox}.
 
\vspace{0.5em}
\noindent $\mathrm{(ii)}$ 
Given $\Pb \in  \Pcb_R$, it follows from Proposition \ref{prop:density_Pcb} that one can approximate it in $\Wc_2$ by a sequence $(\P^m)_{m\ge 1}\subset \Pcb^{\circ}_S$, i.e.~corresponding to piecewise-constant controls $(\alpha^m)_{m\ge 1}$ associated to a sequence of partition $(t^m_i,i\le m)_{m\ge 1}$. By a further approximation argument, one can assume that   each  $\alpha^m_{t^m_i}$ is a continuous function of $B_{t^m_i \wedge \cdot}$.  For $m$ fixed, standard propagation of chaos results for uncontrolled McKean-Vlasov SDEs, see e.g.~\cite{Sznitman}, imply that $\Wc_2 \big( \Pb^{N,\alpha^m}, \Pb^m \big)  {\longrightarrow} 0$ as $N\to \infty$, recall \eqref{eq:def_PN}. As a result, one can find a sequence $(m_N)_{N\ge 1}$ such that $\Wc_2 \big( \Pb^{N,\alpha^{m_N}}, \Pb \big)  {\longrightarrow} 0$ as $N\to \infty$

\vspace{0.5em}

\noindent $\mathrm{(iii)}$ In view of $\mathrm{(i)-(ii)}$, it is easy to deduce that
$$
\limsup_{N \to \infty} V^N \le J(\Pb^{\infty}) \le V,
~~\mbox{and}~~
\liminf_{N \to \infty} V^N \ge \sup_{\Pb \in \Pcb_R} J(\Pb) = V,
$$
which concludes the proof.
\end{proof}
 
\begin{remark}
{
Using similar arguments, one can also deduce the continuity of the value function $V$ 
w.r.t. the initial distribution $\nu_0$.
More precisely, let us write $V(\nu_0)$ (resp. $V^N(\nu_1, \ldots, \nu_N)$) in place of $V$ (resp.~$V^N$) to emphasize its dependance on the initial distribution $\nu_0$ (resp.~$(\nu_1, \ldots, \nu_N)$).
Then, for any sequence $(\nu_N)_{N \ge 1} \subset \Pc_p(\R^d)$, for some $p > 2$, and such that $\lim_{N \to \infty} \Wc_2(\nu_N , \nu_0) = 0$, one has
$$
\lim_{N \to \infty} V(\nu_N) = V(\nu_0).
$$
{In particular}, 
\begin{align*}
& \lim_{N \to \infty}
\bigg|
V^N(\nu_1, \ldots, \nu_N)  -V\bigg(\frac{1}{N}\sum_{k=1}^N \nu_k \bigg)
\bigg| \\
\le &
\lim_{N \to \infty}
\bigg|
V^N (\nu_1, \ldots, \nu_N) -V\big( \nu \big)
\bigg|
+
\lim_{N \to \infty}
\bigg|
V\bigg(\frac{1}{N}\sum_{k=1}^N \nu_{{k}} \bigg) - V \big( \nu \big)
\bigg| 
~=~
0
\end{align*}
{whenever  $\Wc_p \big(N^{-1}\sum_{k=1}^{N} \nu^k , \nu \big)\underset{N\to\infty}{\longrightarrow}0$.}}
\end{remark}

\end{document}